\documentclass[12pt]{article}

\usepackage{amsmath,amsthm,amsfonts,amssymb, color}

\newtheorem{theorem}{Theorem}[section]

\newtheorem{lemma}[theorem]{Lemma}

\newtheorem{remark}[theorem]{Remark}

\def\cB{\mathcal{B}}

\def\cF{\mathcal{F}}

\def\cP{\mathcal{P}}

\def\cS{\mathcal{S}}

\def\bE{\mathbb{E}}
\def\bN{\mathbb{N}}
\def\bP{\mathbb{P}}
\def\bQ{\mathbb{Q}}
\def\bR{\mathbb{R}}

\def\e{\varepsilon}

\topmargin -0.4in
\headsep 0.4in
\textheight 9.0in
\oddsidemargin 0.02in
\evensidemargin 0.15in
\textwidth 6.3in

\begin{document}

\title{Stochastic wave equation with L\'evy white noise}

\author{Raluca M. Balan\footnote{University of Ottawa, Department of Mathematics and Statistics, 150 Louis Pasteur Private, Ottawa, Ontario, K1G 0P8, Canada. E-mail address: rbalan@uottawa.ca.}\footnote{Research supported by a grant from the Natural Sciences and Engineering Research Council of Canada.}}

\date{March 22, 2023}
\maketitle

\begin{abstract}
\noindent In this article, we study the stochastic wave equation on the entire space $\bR^d$, driven by a space-time L\'evy white noise with possibly infinite variance (such as the $\alpha$-stable L\'evy noise). In this equation, the noise is multiplied by a Lipschitz function $\sigma(u)$ of the solution. We assume that the spatial dimension is $d=1$ or $d=2$. Under general conditions on the L\'evy measure of the noise, we prove the existence of the solution, and we show that, as a function-valued process, the solution has a c\`adl\`ag modification in the local fractional Sobolev space of order $r<1/4$ if $d=1$, respectively $r<-1$ if $d=2$.
\end{abstract}

\noindent {\em MSC 2020:} Primary 60H15; Secondary 60G60, 60G51

\vspace{1mm}

\noindent {\em Keywords:} stochastic partial differential equations, random fields, space-time L\'evy white noise

\section{Introduction}

The study of stochastic partial differential equation (SPDEs) using the random field approach was initiated by John Walsh's lecture notes \cite{walsh86} in 1986, and has become a very broad area in stochastic analysis since then. Major efforts have been dedicated to understanding the behaviour of solutions of SPDEs driven by  space-time Gaussian white noise, as a natural replacement for the Brownian motion that is used in the classical theory of stochastic differential equations (SDEs). In 1999, in the seminal article \cite{dalang99}, Robert Dalang introduced the spatially-homogeneous Gaussian noise and developed the major tools for the study of SPDEs using random fields, laying the foundation of a general theory. These tools were embraced very quickly by a large scientific community and yielded spectacular results, especially when combined with techniques from Malliavin calculus. We include here just a small sample from a very large set of important contributions to this area: \cite{conus-dalang08,hu-nualart09,FK09,CJK13,chen-dalang15,
huang-le-nualart15,xchen17}.

We should also mention that there exists an alternative way for studying SPDEs, using cylindrical processes as noise, and there is a vast literature dedicated to stochastic evolution equations with this type of noise.
We refer the reader to the excellent monograph \cite{PZ07} and the references therein. The recent papers \cite{jakubowski-riedle17,kosmala-riedle21} contain significant advances on stochastic integration with respect to cylindrical L\'evy noise, while \cite{griffiths-riedle21} gives a comparison between the cylindrical approach and the random field approach, which complements the similar comparison that was done in \cite{dalang-quer11} in the Gaussian case.

SDEs driven by L\'evy processes have been in the literature as long as their Brownian motion counterparts, as their origin can be traced back to It\^o's memoir \cite{ito51}.
There exist many breakthrough contributions to the area of SDEs driven by L\'evy processes (or more generally, by discontinuous semi-martingales), and several excellent monographs were published on this subject (for instance \cite{applebaum09}, \cite{protter}, \cite{bichteler02}, \cite{JS}).
But the study of SPDEs driven by L\'evy noise using the random field approach is not so well-developed as
 the Gaussian case. The majority of the existing works have focused so far only on the stochastic heat equation.
We recall below some known results for the heat equation, which are relevant for the present article.
But first, we need to introduce some notation.

Throughout this article, we let $L=\{L(A);A \in \cB_0(\bR_{+} \times \bR^d)\}$ be a {\em pure-jump L\'evy space-time white noise}, defined on a complete probability space $(\Omega,\cF,\bP)$:
\begin{equation}
\label{def-L}
L(A)=b|A|+\int_{A \times \{|z|\leq 1\} } z \widetilde{J}(dt,dx,dz)+
\int_{A \times \{|z|> 1\}}z J(dt,dx,dz),
\end{equation}
where $\cB_0(\bR_{+} \times \bR^d)$ is the class of Borel sets in $\bR_{+} \times \bR^d$ with finite Lebesgue measure, $|A|$ is the Lebesgue measure of $A$, $b \in \bR$, $J$ is a Poisson random measure on $\bR_{+} \times \bR^d \times \bR$ of intensity $dtdx \nu(dz)$, $\widetilde{J}$ is the compensated version of $J$,
and $\nu$ is a L\'evy measure on $\bR$, i.e. a measure satisfying the following conditions:
\begin{equation}
\label{cond-Levy}
\int_{\bR}(|z|^2 \wedge 1)\nu(dz)<\infty \quad \mbox{and} \quad \nu(\{0\})=0.
\end{equation}
An important particular case is when $L$ is an {\em $\alpha$-stable L\'evy noise}, i.e. an $\alpha$-stable random measure with control measure given by the Lebesgue measure multiplied by a constant, as defined in \cite{ST94}. In this case,
\[
\nu(dz)=\Big(c_{+} z^{-\alpha-1}1_{(0,\infty)}(z)+c_{-} (-z)^{-\alpha-1}1_{(-\infty,0)}(z)\Big)dz
\]
for some $\alpha \in (0,2)$, $c_{+}>0$, $c_{-}>0$. For the symmetric $\alpha$-stable L\'evy noise, $c_{+}=c_{-}$.

The noise $L$ is a natural space-time extension of a classical L\'evy process (with no Gaussian component), which we recall can be written as
\[
X(t)=at+\int_{[0,t] \times \{|z|\leq 1\}}z \widetilde{N}(dt,dz)+  \int_{[0,t] \times \{|z|> 1\}}z N(dt,dz), \quad t\geq 0,
\]
where $a \in \bR$, $N$ is a Poisson random measure on $\bR_{+} \times \bR$ of intensity $dt \nu(dz)$, and $\nu$ is a L\'evy measure. The process $\{X(t)\}_{t\geq 0}$ has a c\`adl\`ag modification, and the measure $\nu$ gives information about the size of the jumps of the sample paths of this modification.

In the space-time framework, we still speak about the third component of $J$ as the ``jump'' component, although we do not identify a time-indexed process whose paths have these ``jumps''.
 In particular, the ``large jumps'' (corresponding to $|z|>1$) control the moments of $L$, in the sense that $\bE|L(A)|^p<\infty$ if $\int_{|z|>1}|z|^p\nu(dz)<\infty$, for any $p\geq 2$.
In the present paper, we are interested in the case when
$\int_{|z|>1}|z|^2\nu(dz)$ may be infinite, and $L(A)$ may have infinite variance, as it happens in the $\alpha$-stable case. (The case of finite variance L\'evy noise is interesting too, since one can develop Malliavin calculus techniques similar to the Gaussian case; see \cite{NN, DOP09} for a very readable introduction to this subject.)

\medskip
Consider now the following stochastic heat equation with noise $L$:
\begin{align}
\label{heat-eq} 
\begin{cases}
\dfrac{\partial u}{\partial t} (t,x)=\frac{1}{2}\Delta u(t,x)+\sigma(u(t,x))\dot{L}(t,x), \quad \quad t>0,x \in \bR^d \quad (d\geq 1),\\
u(0,x) = u_0(x), \ x \in \bR^d.
\end{cases}
\end{align}
The solution of this equation is a predictable process which satisfies the integral equation:
\[
u(t,x)=(g_t*u_0)(x)+\int_0^t \int_{\bR^d}g_{t-s}(x-y)\sigma(u(s,y))L(ds,dy),
\]
where $g_t(x)=(2\pi t)^{-d/2}\exp(-|x|^2/(2t))$ is the heat kernel, and $|\cdot|$ is the Euclidean norm in $\bR^d$. We say that a random field $\{X(t,x);t\geq 0,x\in \bR^d\}$ is {\em predictable} if it is measurable with respect to the $\sigma$-field $\cP=\cP_0 \times \cB(\bR^d)$, where $\cP_0$ is the predictable $\sigma$-field on $\Omega \times \bR_{+}$.

In \cite{bie98}, Saint Loubert Bi\'e proved that if the measure $\nu$ satisfies
\begin{equation}
\label{bie-cond}
\int_{\bR}|z|^p \nu(dz)<\infty \quad \mbox{for some} \quad p\in[1,2],
\end{equation}
and
\begin{equation}
\label{cond-p}
p<1+\frac{2}{d}
\end{equation}
then equation \eqref{heat-eq} has a unique solution which satisfies:
\[
\sup_{(t,x)\in [0,T] \times \bR^d}\bE|u(t,x)|^p<\infty.
\]

Condition \eqref{cond-p} comes from the requirement $\int_0^t \int_{\bR^d}g_{t-s}^p(x-y)dyds<\infty$ and forces $p<2$ if $d\geq 2$. A more severe restriction is \eqref{bie-cond}, since it excludes the $\alpha$-stable L\'evy noise. This has been an open problem for some time, which was partially solved in \cite{B14,chong17-JTP} by replacing $\bR^d$ in \eqref{heat-eq} by a bounded domain $D \subset \bR^d$. Previous investigations related to this problem can be found for instance in \cite{AWZ98,applebaum-wu00,mueller98,mytnik02}, the last two references dealing with $\alpha$-stable L\'evy noise and a non-Lipschitz function $\sigma$.

A major breakthrough was made in article \cite{chong17-SPA}, in which it was showed that equation \eqref{heat-eq} has a solution, if there exist some exponents $p>0,q>0$ such that $p$ satisfies \eqref{cond-p}, $p/(2+2/d-p)<q\leq p$, and
\[
\int_{\{|z|\leq 1\}}|z|^p \nu(dz)<\infty \quad \mbox{and} \quad
\int_{\{|z|>1\}} |z|^{q}\nu(dz)<\infty.
\]
This condition holds for the $\alpha$-stable L\'evy noise with $\alpha<1+2/d$. If $p<1$, it is assumed in addition that $b=\int_{\{|z|\leq 1\}}z\nu(dz)$, a condition which holds for the symmetric $\alpha$-stable L\'evy noise. The novel ideas of \cite{chong17-SPA} are to use different exponents $p$ and $q$ for $|z|\leq 1$ and $|z|>1$, and a spatially-dependent truncation function $h(x)$.
The constant truncation function $h(x)=1$ used in \cite{B14} for the $\alpha$-stable L\'evy noise is problematic for equations on the entire domain $\bR^d$.
Unlike the Gaussian case, the solution of \cite{chong17-SPA} is not obtained as the limit of the sequence of Picard's iterations, being defined as
$u(t,x)=u_{N}(t,x)$ if $t\leq \tau_N$,
where $u_N$ is the solution of equation \eqref{heat-eq} in which $L$ is replaced by the truncated noise:
\begin{align}
\nonumber
L_N(A)&=b|A|+\int_{A \times \{|z|\leq 1\}}z \widetilde{J}(dt,dx,dz)+
\int_{A \times \{1<|z|\leq N h(x) \}}z J(dt,dx,dz)\\
\label{decomp-LN}
&=:b|A|+L^M(A)+L_N^P(A)
\end{align}
where the indices $M$ and $P$ come from ``martingale'', respectively ``compound-Poisson''. The stopping times $\tau_N$ are given by:
\[
\tau_N=\inf \left\{T>0; \int_0^T \int_{\bR^d} \int_{\{|z|>N h(x)\}} J(dt,dx,dz)>0\right\},
\]
where $h(x)=1+|x|^{\eta}$ for some $\frac{d}{q}<\eta<\frac{2-d(p-1)}{p-q}$.
Since $u_N(t,x)=u_{N+1}(t,x)$ a.s. on the event $\{t \leq \tau_{N}\}$, $u(t,x)$ is well-defined.
In addition, it was shown in \cite{chong17-SPA} that $\bE|u_N(t,x)|^p$ is finite and uniformly bounded for $(t,x)$ in a compact set. But the solution may not be unique.
The asymptotic behaviour of the moments of $u(t,x)$ have been studied in the subsequent paper \cite{chong-kevei19}. The path properties of the solution have been studied in \cite{CDH19}. The
recent preprint \cite{BCL} focuses on the case $\sigma(u)=\beta u$, $\beta>0$ and $\nu(-\infty,0)=0$, and establishes uniqueness of the solution, together with deep intermittency-type properties.

\medskip

In this article, we analyze the stochastic wave equation:
\begin{align}
\label{wave-eq} 
\begin{cases}
\dfrac{\partial^2 u}{\partial t^2} (t,x)=\Delta u(t,x)+\sigma(u(t,x))\dot{L}(t,x), \quad \quad t>0,x \in \bR^d \quad (d \leq 2), \\
u(0,x) = u_0(x), \quad \dfrac{\partial u}{\partial t}(0,x)=v_0(x), \quad \quad \quad \quad  \ x \in \bR^d,
\end{cases}
\end{align}
where $\sigma$ is a Lipschitz function on $\bR$, $u_0$ and $v_0$ are deterministic functions on $\bR^d$, and $L$ is a L\'evy space-time white noise given by \eqref{def-L}. The reason we restrict ourselves to the case $d\leq 2$ is that in dimensions $d\geq 3$, the fundamental solution $G_t$ of the wave equation is not a function (it is a measure if $d=3$ and a distribution if $d\geq 4$).

A predictable random field $u=\{u(t,x);t\geq 0,x\in \bR^d\}$ is a {\bf solution} of \eqref{wave-eq} if it satisfies the integral equation:
\[
u(t,x)=w(t,x)+\int_0^t \int_{\bR^d}G_{t-s}(x-y)\sigma(u(s,y))L(ds,dy).
\]
The stochastic integral on the right-hand side of this equation is defined as in \cite{chong17-SPA}, using the concept of {\em Daniell mean}. We refer the reader to the appendix for the definition of this integral.

Here $G$ is the fundamental solution of the wave equation on $\bR_{+}\times \bR^d$: for any $t>0$ 
\begin{align}
	G_t(x)=
	\begin{cases}
		\displaystyle \frac{1}{2}1_{\{|x|<t\}}                               & \text{if $d=1$}\\[1em]
		\displaystyle \frac{1}{2\pi} \frac{1}{\sqrt{t^2-|x|^2}}1_{\{|x|<t\}} & \text{if $d=2$}
	\end{cases}
\end{align}
and $w$ is the solution of the wave equation
$\frac{\partial u}{\partial t}-\Delta u=0$ on $\bR_{+} \times \bR^d$ with the same in initial conditions as \eqref{wave-eq}, given by:
\[
w(t,x)=(G_t*v_0)(x)+\frac{\partial}{\partial t} (G_t*u_0)(x).
\]
We assume that the functions $u_0$ and $v_0$ satisfy the following conditions:
\begin{itemize}
\item ($d=1$) $u_0$ is bounded and continuous, and $v_0$ is bounded and measurable
\item ($d=2$) $u_0 \in C^1(\bR^2)$ and there exists $q_0 \in (2,\infty]$ such that $u_0,\nabla u_0,v_0 \in L^{q_0}(\bR^2)$.
\end{itemize}
Under these conditions, $w$ is jointly continuous and $\sup_{(t,x)\in [0,T] \times \bR^d}|w(t,x)|<\infty$ (see for instance, Lemma 4.2 of \cite{dalang-quer11}).

Our first goal is to prove that the solution to equation \eqref{wave-eq} exists.
As far as we know, this problem has not been studied in the literature before. In dimension $d=2$, one difficulty is the fact that the fundamental solution $G_t(x)$ of the wave operator has singularites on the boundary of the set $|x|<t$, which lead to lengthy calculations (see for instance \cite{dalang-frangos98,millet-sanz99} for the case of Gaussian noise). Another problem is the fact that $(G_t)_{t> 0}$ does not have the semigroup property.
Luckily, some extremely useful (and highly non-trivial) properties of $G$ have been recently obtained in article \cite{BNZ20}, which lead to very impressive results about the asymptotic behaviour of the spatial average of the solution of the wave equation with spatially-homogeneous Gaussian noise. These properties will play an important role in this article, for the proof of the existence of the solution.
More precisely, the proof of Theorem \ref{exist-th-K} below involves working with convolutions of the form $G_t^p*G_s^p$; see relation \eqref{def-B-in} below. In the case of the heat equation, these are dealt with in \cite{chong17-SPA} using properties of the normal distribution, which reduce essentially to the semigroup property of the heat kernel $g_t(x)$, since $g_t^p(x)=(2\pi t)^{d(1-p)/2}g_t(x)$. In the case of the wave equation, it is not obvious how to work with such convolutions, especially in the case $d=2$. If $d=1$, the problem is not so difficult since $(G_t*G_s)/(ts)$ is the law of the sum of two independent random variables with uniform distributions on $(-t,t)$, respectively $(-s,s)$.

Our second goal is to show that the solution of equation \eqref{wave-eq} has a c\`adl\`ag modification, when viewed as a process with values in a suitable fractional Sobolev space. A similar phenomenon has been studied in \cite{CDH19} for the heat equation. The starting point of this analysis is a quick look at the behavior of $G_t$ when $t=0$. In the case of the heat equation,
$g_0(x):=\lim_{t\to 0}g_t(x)$ is 0 if $x \not=0$ and $\infty$ if $x=0$, in any dimension $d$; hence, we can say that $g_0=\delta_0$ (the Dirac delta distribution at $0$). In the case of the wave equation, the situation is different in the case $d=2$ compared with $d=1$: $G_0=\delta_0$ if $d=2$, and $G_0=2^{-1}1_{\{0\}}$ if $d=1$ ($1_{\{0\}}$ is indicator function of $\{0\}$). Since the atoms $(T_i,X_i,Z_i)$ of $J$ contribute a value $G_{t-T_i}(x-X_i)Z_i$ to the solution $u(t,x)$, the function $u(t,\cdot)$ will likely live in the same function space as $G_{t-T_i}(\cdot-X_i)$. Moreover, the regularity of the path $t\mapsto u(t,\cdot)$ is related to regularity of $t \mapsto G_{t-T_i}(\cdot-X_i)$. In Section 3 below, we will show that the solution to equation \eqref{wave-eq} has a c\`adl\`ag modification with values in the fractional Sobolev space $H_{\rm loc}^r(\bR^d)$, for any $r<-1$ if $d=2$, respectively for any $r<1/4$ if $d=1$.

The article is organized in two parts: in Section 2 we show the existence of the solution to equation \eqref{wave-eq}, and in Section 3 we study the path properties of this solution. We recall that the $L^p(\Omega)$-norm is defined by:
\[
\|X\|_p=(\bE|X|^p)^{1/p} \quad \mbox{if $p\geq 1$} \quad \mbox{and} \quad
\|X\|_p=\bE|X|^p \quad \mbox{if $p<1$}.
\]

\section{Existence of solution}

In this section, we prove the existence of solution to equation \eqref{wave-eq}.
We proceed as in \cite{chong17-SPA} in the case of the heat equation. Let $L_N$ be the truncated noise given by \eqref{decomp-LN}, with
$h(x)=1+|x|^{\eta}$ for some $\eta>0$.
We assume that $\nu$ satisfies the following assumption:

\medskip

{\bf Assumption A.} There exist $0<q \leq p \leq 2$ such that
\begin{equation}
\label{pq-cond}
\gamma_1=\int_{\{|z|\leq 1\}}|z|^{p}\nu(dz)<\infty \quad \mbox{and} \quad \gamma_2=\int_{\{|z|>1\}}|z|^q \nu(dz)<\infty.
\end{equation}
If $p<1$, assume that $b=\int_{\{|z|\leq 1\}}z\nu(dz)$.

\begin{remark}
{\rm
{\em (i)}
In addition to \eqref{pq-cond}, we will need that $\int_0^t \int_{\bR^d}G_{t-s}^p(x-y)dyds<\infty$. If $d=1$, this imposes no restrictions on $p$, so we can take $p=2$. But if $d=2$, we encounter the restriction $p<2$, which is the same as condition \eqref{cond-p} that is needed for the heat equation in dimension $d=2$.

{\em (ii)} We will see below that the value $p$ from Assumption A plays an important role in the analysis of the solution $u_N$ of the wave equation with truncated noise $L_N$: $u_N(t,x)$ has a finite $p$-th moment! Since in dimension $d=1$, we can take $p=2$, this means that $u_N(t,x)$ has finite second moment. 

{\em (iii)} If $L$ is an $\alpha$-stable L\'evy noise, condition \eqref{pq-cond} holds for any $q<\alpha<p$. So in dimension $d=1$, the solution of the equation with truncated noise has finite second moment, 
although the noise itself does not have this property. This shows the drastic impact of the truncation of the noise on the behaviour of the solution.

{\em (iv)} Unlike the case of the heat equation, we do not require a lower bound on the exponent $q$ in \eqref{pq-cond}. In \cite{chong17-SPA}, the fact that the upper bound given by (3.13) has to be summable in $n$, combined with the restriction $\eta>d/q$ (from Lemma 3.2 ibid.) yields the condition $q>\frac{dp}{d+2-d(p-1)}$. For the wave equation, we obtain a different upper bound, which is summable regardless of the value of $q$ (see the proof of Theorem \ref{exist-th-K} below).

{\em (v)} In the recent preprint \cite{BCL}, the authors have obtained the existence and uniqueness of solution to the heat equation \eqref{heat-eq} with $\sigma(u)=u$ (known as the parabolic Anderson model) under different integrability conditions on the small jumps and the large jumps than \eqref{pq-cond}. The methods of \cite{BCL} rely on the special form of the heat kernel. Finding similar methods that would show the existence and uniqueness of solution to the wave equation \eqref{wave-eq} with $\sigma(u)=u$ (known as the hyperbolic Anderson model) remains an open problem.
}
\end{remark}

When Assumption A holds with $p<1$, the truncated noise $L^N$ has no drift:
\begin{align}
\nonumber
L_N(A)&=\int_{A \times \{|z|\leq 1\}}z J(dt,dx,dz)+\int_{A \times\{1<|z|\leq N h(x)\}}z J(dt,dx,dz)\\
\label{no-drift-LN}
&=:L^Q(A)+L_N^P(A).
\end{align}

\medskip

The following lemma is an important tool for controlling the $p$-th moments of stochastic convolutions of $G$ with the truncated noise $L_N$, and is a reformulation of Lemma 3.3 of \cite{chong17-SPA}. We believe that the condition $p\leq 2$ is needed for this result, since its proof relies essentially on the maximal inequality given by Theorem \ref{max-ineq-th}. This condition is missing in \cite{chong17-SPA}.
Note that this result was stated in \cite{chong17-SPA} for the fundamental solution of the heat equation, but it
remains valid for general functions. 

\begin{lemma}[Lemma 3.3 of \cite{chong17-SPA}]
\label{lem-p-mom}
Suppose that Assumption A holds. Let $G$ be a non-negative function such that $
\int_0^T \int_{\bR^d}\big(G_{t}^p(x)+G_{t}(x)\big)dxdt<\infty$.
For any predictable processes $X,X_1$ and $X_2$ and for any $t \in [0,T]$ and $x \in \bR^d$,
\begin{align*}
& \bE\left|\int_0^t \int_{\bR^d} G_{t-s}(x-y) X(s,y)L_N(ds,dy)\right|^p \leq \\
& \quad
 C_{T} \int_0^t \int_{\bR^d} \big(G_{t-s}^p(x-y)+G_{t-s}(x-y)1_{\{p \geq 1\}}\big) \big(1+\bE|X(s,y)|^p \big) h(y)^{p-q}dyds
\end{align*}
and
\begin{align*}
& \bE\left|\int_0^t \int_{\bR^d} G_{t-s}(x-y) X_1(s,y)L_N(ds,dy)-\int_0^t \int_{\bR^d} G_{t-s}(x-y) X_2(s,y)L_N(ds,dy)\right|^p \leq \\
& C_{T} \int_0^t \int_{\bR^d} \big(G_{t-s}^p(x-y)+G_{t-s}(x-y)1_{\{p \geq 1\}}\big) \bE|X_1(s,y)-X_2(s,y)|^p  h(y)^{p-q}dyds,
\end{align*}
where $C_{T}$ is a constant that depends on $T$ (and also on $K,p, q,\gamma_1,\gamma_2$, but not on $h$), and $p,q$ are the constants from Assumption A.
\end{lemma}

We will need several properties of $G$.
In both cases $d=1$ and $d=2$, $\int_{\bR^d}G_t(x)dx=t$, and the Fourier transform of $G_t$ is:
\begin{equation}
\label{Fourier-G}
\cF G_t(\xi)=\int_{\bR^d}e^{-i\xi \cdot x}G_t(x)dx=\frac{\sin(t|\xi|)}{|\xi|}, \quad \mbox{for all $\xi \in \bR^d,t>0$}.
\end{equation}

Note that
\begin{align}
\label{p-G}
	\int_{\bR^d}G_t^p(x)dx=
	\begin{cases}
		\displaystyle    2^{1-p}t                          & \text{for any $p>0$, \ if $d=1$}\\[1em]
		\displaystyle  \frac{(2\pi)^{1-p}}{2-p}t^{2-p} & \text{for any $p \in (0,2)$, \ if $d=2$}\\
	\end{cases}
\end{align}

If $d=1$, $\int_{\bR} G^p(t,x)|x|^{\gamma}dx=\frac{2^{1-p}}{\gamma+1}t^{\gamma+1}$ for any $p>0$ and $\gamma>-1$.
If $d=2$, 
\begin{equation}
\label{Gp-gamma}
\int_{\bR^2} G^p(t,x)|x|^{\gamma}dx \leq \frac{(2\pi)^{1-p}}{2-p}t^{2-p+\gamma} \quad \mbox{for any $p\in (0,2)$ and $\gamma>0$,}
\end{equation}
and
\begin{equation}
\label{G-pq-ineq}
G_t^{p}(x) \leq (2\pi t)^{q-p} G_t^{q}(x) \quad \mbox{for any $p<q$}.
\end{equation}

In the case of the wave equation, $(G_t)_{t\geq 0}$ does not have the semigroup property. Fortunately, the recent article \cite{BNZ20} contains some very useful properties of $G$, from which one can deduce a sub-semigroup-type property of $(G_t)_{t\geq 0}$. When $d=1$, it is not difficult to see that for any
$r<s<t$ and $x \in \bR$,
\begin{equation}
\label{semigroup-1}
(G_{t-s}*G_{s-r})(x) \leq \frac{1}{2} (t-r)G_{t-r}(x).
\end{equation}
This is relation (2.6) of \cite{NZ21}. For $d=2$, we have the following highly non-trivial result.

\begin{lemma}[Lemma 4.3 of \cite{BNZ20}]
\label{lem-BNZ}
Assume that $d=2$. Let $q \in (\frac{1}{2},1)$ and $\delta \in [1,1/q]$ be arbitrary. Then, for any $0<r<t$ and $x \in \bR^2$,
\begin{equation}
\label{lem33-BNZ}
\int_r^t (G_{t-s}^{2q}*G_{s-r}^{2q})^{\delta}(x)ds \leq A_{q} (t-r)^{1-\delta(2q-1)}G_{t-r}^{\delta(2q-1)}(x),
\end{equation}
where $A_{q}>0$ is a constant depending on $q$.
\end{lemma}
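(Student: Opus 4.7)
The plan is to reduce the inequality to a dimensionless version by a scaling argument, establish a pointwise bound on the convolution, and then integrate in time.

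\emph{Scaling reduction.} The two-dimensional wave kernel satisfies the scaling identity $G_{ct}(cy)=c^{-1}G_t(y)$ for any $c>0$. Setting $c=t-r$, $s=r+uc$ with $u\in(0,1)$, and substituting $y=c\tilde{y}$ in the spatial convolution (with $\tilde{x}=x/c$), a direct computation gives
$$
(G_{t-s}^{2q}*G_{s-r}^{2q})(x) \;=\; c^{\,2-4q}\,(G_{1-u}^{2q}*G_u^{2q})(\tilde{x}).
$$
The change of variable $ds=c\,du$ then shows that both sides of \eqref{lem33-BNZ} are homogeneous of degree $1-2\delta(2q-1)$ in $c$, so the lemma reduces to the normalized statement
$$
\int_0^1 (G_{1-u}^{2q}*G_u^{2q})^{\delta}(\tilde{x})\,du \;\le\; A_q\,G_1^{\delta(2q-1)}(\tilde{x}).
$$
Both sides vanish for $|\tilde{x}|\geq 1$ by the support properties of the wave kernel, so it suffices to work inside the open unit disc.

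\emph{Pointwise estimate of the convolution.} Writing $\rho=|\tilde{x}|$ and using the factorization $(A^2-|w|^2)^{-q}=(A-|w|)^{-q}(A+|w|)^{-q}$, I would aim for a pointwise bound of the form
$$
(G_{1-u}^{2q}*G_u^{2q})(\tilde{x}) \;\le\; C_q\,\psi_q(u)\,G_1^{2q-1}(\tilde{x}),
$$
with an explicit function $\psi_q$ whose singularities at $u\in\{0,1\}$ are controlled. To obtain this I would parametrize the lens-shaped intersection $\{|\tilde{y}|<u\}\cap\{|\tilde{x}-\tilde{y}|<1-u\}$ by coordinates that separate the two radial singularities, for example by integrating first transversely to the segment joining $0$ and $\tilde{x}$ and then along it, combined with an affine substitution of the type $\tau = (u - b)/(1 - a - b)$ with $a=|\tilde{x}-\tilde{y}|$, $b=|\tilde{y}|$, which disentangles the two factors and yields one-dimensional integrals of Beta-function type. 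The product of the resulting boundary contributions is expected to collapse into precisely the factor $(1-\rho^2)^{-(2q-1)/2}$, which is a constant multiple of $G_1^{2q-1}(\tilde{x})$.

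\emph{Integration and main obstacle.} Raising the pointwise bound to the power $\delta$ and integrating over $u\in(0,1)$ gives the stated inequality with $A_q=C_q^{\delta}(2\pi)^{(\delta-1)(2q-1)}\int_0^1\psi_q(u)^{\delta}\,du$. The hypothesis $\delta\in[1,1/q]$ is exactly what makes $\int_0^1\psi_q(u)^{\delta}du$ finite: the exponent on the factor $(u(1-u))$ arising from the Beta-function step behaves like $\delta(1-2q)$, and the bound $\delta q\leq 1$ (equivalently $\delta(2q-1)\leq 2-\delta\le 1$) keeps this exponent strictly above $-1$. The main technical obstacle is the sharp pointwise bound in the middle step: since $2q>1$, the kernel $G_t^{2q}$ is not integrable across the light cone boundary, so one must analyze delicately how the two singular circles (around $0$ of radius $u$, and around $\tilde{x}$ of radius $1-u$) interact. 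The geometry of the lens changes qualitatively as $u$ ranges over $(0,1)$ and $\rho$ over $(0,1)$, and a clean accounting of the different regimes (for instance, $\rho$ close to $0$ versus $\rho$ close to $1$, and $u$ close to $1/2$ versus $u$ close to the endpoints) is the non-trivial heart of the argument.
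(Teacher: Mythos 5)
First, a remark on the comparison itself: the paper does not prove this statement at all — it is quoted verbatim as Lemma 4.3 of \cite{BNZ20}, and the author explicitly describes it as a highly non-trivial external result. So your proposal can only be judged on its own merits, and there it has a genuine gap. Your scaling reduction is correct: $G_{ct}(cy)=c^{-1}G_t(y)$ does reduce \eqref{lem33-BNZ} to the normalized inequality $\int_0^1 (G_{1-u}^{2q}*G_u^{2q})^{\delta}(\tilde{x})\,du \le A_q G_1^{\delta(2q-1)}(\tilde{x})$, and the support observation is fine. But the entire content of the lemma is the middle step, which you only announce (``I would aim for\dots'', ``is expected to collapse''), and the specific form you aim for is in fact false. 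You postulate a bound $(G_{1-u}^{2q}*G_u^{2q})(\tilde{x}) \le C_q\,\psi_q(u)\,G_1^{2q-1}(\tilde{x})$ uniform in $\tilde{x}$, with $\psi_q$ finite for (almost) every $u$. Take $u=1/2$ and $\tilde{x}=0$: then
\begin{equation*}
(G_{1/2}^{2q}*G_{1/2}^{2q})(0)=\int_{\bR^2}G_{1/2}^{4q}(y)\,dy=+\infty \quad\text{for every } q>\tfrac12,
\end{equation*}
since $(1/4-|y|^2)^{-2q}$ is not integrable across $|y|=1/2$, while $G_1^{2q-1}(0)$ is finite. More generally, for each $u$ the two singular circles $\{|y|=u\}$ and $\{|\tilde{x}-y|=1-u\}$ become internally tangent when $|\tilde{x}|=|1-2u|$, and a local computation (the tangential interaction produces $\int |w_1|^{2-4q}dw_1$) shows the convolution is $+\infty$ on that whole circle whenever $q\ge 3/4$. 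Hence for $q\in[3/4,1)$ one has $\sup_{\tilde{x}}(G_{1-u}^{2q}*G_u^{2q})(\tilde{x})/G_1^{2q-1}(\tilde{x})=+\infty$ for every $u$, so no choice of $\psi_q$ can make your pointwise estimate true, and raising it to the power $\delta$ and integrating cannot be the route to \eqref{lem33-BNZ}.

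The deeper point is that the $ds$ (after scaling, $du$) integration cannot be decoupled from the spatial variable the way you propose: the inequality is saved precisely because, for fixed $\tilde{x}$, the bad configurations (coinciding or osculating light-cone circles) occur only at isolated values of $u$, where the convolution blows up at an $u$-integrable rate of order $|1-2u|$ raised to roughly $\delta(1-2q)$ — this is exactly where the hypothesis $\delta\le 1/q$ enters, not (only) through Beta-type endpoint factors $(u(1-u))^{\delta(1-2q)}$ at $u\in\{0,1\}$ as you predict. Your sketch misses this interior singularity in $u$ entirely. So while the scaling step and the final bookkeeping are sound, the argument as structured would fail at its key step; a correct proof has to estimate the full double integral in $(u,\tilde{y})$ jointly (as is done in \cite{BNZ20}), keeping track of how the two radial singularities interact as $u$ sweeps past the critical values where the circles osculate.
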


Using \eqref{lem33-BNZ} and H\"older's inequality, we obtain that for any $q \in (\frac{1}{2},1)$ and $p<2q$,
\begin{equation}
\label{I-rt-z}
\int_{r}^{t} (t-s)^{2q-p} (s-r)^{2q-p} (G_{t-s}^{2q}* G_{s-r}^{2q})(x)ds \leq C_{p,q} (t-r)^{2(q-p+1)} G_{t-r}^{2q-1}(x),
\end{equation}
where $C_{p,q}>0$ is a constant depending on $p$ and $q$.

\medskip

We will need the following result, which existed in the first version of \cite{BNZ20} posted on arXiv (Lemma 4.3 in preprint arXiv:2003.10346v1), but was removed from the final version of \cite{BNZ20}. The authors have confirmed in personal communication that this result is correct. We list it here with a reference to \cite{NZ21}, where it was used too.

\begin{lemma}[Lemma 2.5.(b) of \cite{NZ21}]
\label{lem-BNZ2}
Assume that $d=2$. Let $q \in (\frac{1}{2},1)$ and $p \in (0,1)$ be such that $p+2q \leq 3$.
For any $0<r<t$ and $x \in \bR^2$,
\[
\int_r^t (G_{t-s}^{2q}*G_{s-r}^{p})(x)ds \leq B_{p,q} (t-r)^{3-p-2q} 1_{\{|x|<t-r\}},
\]
where $B_{p,q}>0$ is a constant depending on $p$ and $q$.
\end{lemma}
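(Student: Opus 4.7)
The plan is to establish the bound by a scaling reduction followed by an explicit computation in bipolar (elliptic) coordinates with foci at $0$ and $x$.

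First, I would reduce to the unit-scale case. Using the change of variables $s=r+(t-r)\mu$ and $y=(t-r)z$ inside the spatial convolution, and the self-similarity $G_\tau(x)=\tau^{-1}G_1(x/\tau)$, which gives $G_\tau^\alpha(x)=\tau^{-\alpha}G_1^\alpha(x/\tau)$, the left-hand side becomes
\[
(t-r)^{3-p-2q}\int_0^1 \bigl(G_{1-\mu}^{2q}*G_\mu^p\bigr)\bigl(x/(t-r)\bigr)\,d\mu.
\]
Thus it suffices to prove the scale-invariant statement
\[
J(w):=\int_0^1 (G_{1-\mu}^{2q}*G_\mu^p)(w)\,d\mu \le B_{p,q} \cdot 1_{\{|w|<1\}},
\]
and the support indicator is automatic since the spatial convolution is supported in $\{|w|\le 1\}$.

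Second, I would introduce bipolar coordinates $a=|y|$, $b=|w-y|$, with $d:=|w|$. A direct Jacobian calculation (accounting for the two preimages related by reflection across the line through $0$ and $w$) gives
\[
dy=\frac{4ab}{\sqrt{P(a,b,d)}}\,da\,db, \qquad P(a,b,d)=(a+b+d)(a+b-d)(a-b+d)(-a+b+d),
\]
on the region $\{a+b\ge d,\ |a-b|\le d\}$. Inserting the explicit 2D wave kernel and interchanging the orders of integration, the inner $\mu$-integral (for fixed $a,b$) with $L:=1-a-b$ becomes, after the substitution $\mu=a+sL$,
\[
L^{1-q-p/2}\int_0^1 s^{-p/2}(1-s)^{-q}\bigl((1-s)L+2b\bigr)^{-q}(sL+2a)^{-p/2}\,ds.
\]
Bounding crudely $\bigl((1-s)L+2b\bigr)^{-q}\le(2b)^{-q}$ and $(sL+2a)^{-p/2}\le(2a)^{-p/2}$, and using that $\int_0^1 s^{-p/2}(1-s)^{-q}ds=B(1-p/2,1-q)<\infty$ (since $q<1$, $p/2<1$), this inner integral is at most $C_{p,q}\,L^{1-q-p/2}a^{-p/2}b^{-q}$.

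Third, I would change variables $\xi=a+b,\ \eta=a-b$, so that $P(a,b,d)=(\xi^2-d^2)(d^2-\eta^2)$ and $da\,db=\tfrac12 d\xi\,d\eta$. After combining with the Jacobian $4ab/\sqrt{P}$, $J(w)$ is bounded above by
\[
C_{p,q}\int_d^1\!\int_{-d}^d \frac{(1-\xi)^{1-q-p/2}(\xi+\eta)^{1-p/2}(\xi-\eta)^{1-q}}{\sqrt{(\xi^2-d^2)(d^2-\eta^2)}}\,d\eta\,d\xi.
\]
The $\eta$-integral, via $\eta=d\sin\theta$ and the bound $\xi\pm d\sin\theta\le 2\xi$ (valid since $|\eta|\le d\le\xi$), is at most $\pi\,\xi^{2-p/2-q}$. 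The remaining $\xi$-integral, via $\xi=d+v$, reduces to
\[
\int_0^{1-d} v^{-1/2}(2d+v)^{-1/2}(d+v)^{2-p/2-q}(1-d-v)^{1-q-p/2}\,dv,
\]
and here the assumption $p+2q\le 3$ enters: it ensures $2-p/2-q\ge 1/2\ge 0$, so that $(d+v)^{2-p/2-q}\le 1$ uniformly in $d\in[0,1]$. The resulting integral is then dominated by $B(1/2,2-q-p/2)<\infty$, yielding $J(w)\le B_{p,q}$ uniformly in $|w|<1$.

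The main obstacle is the simultaneous presence of the two boundary singularities, of $G_{1-\mu}^{2q}(w-y)$ at $|w-y|=1-\mu$ and of $G_\mu^p(y)$ at $|y|=\mu$. The bipolar change of variables is precisely the device that decouples these into independent singularities in $a$ and $b$, which are then reabsorbed into the $1/\sqrt{P}$ Jacobian factor. The assumption $p+2q\le 3$ is exactly what prevents the final $\xi$-integral from developing a logarithmic blow-up as $d=|w|\to 0$; any weaker condition would force a non-uniform dependence on $|x|/(t-r)$ near the apex of the light cone.
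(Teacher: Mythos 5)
The paper itself does not prove this lemma: it is quoted from Lemma 2.5(b) of \cite{NZ21} (originally Lemma 4.3 of the first arXiv version of \cite{BNZ20}), so your argument is a genuinely self-contained derivation rather than a variant of a proof in the text. The route you take is sound: the scaling reduction to $J(w)=\int_0^1 (G_{1-\mu}^{2q}*G_\mu^p)(w)\,d\mu$ with the support indicator coming for free, the bipolar coordinates with Jacobian $dy=\frac{4ab}{\sqrt{P}}\,da\,db$, the inner $\mu$-integral computed via $\mu=a+sL$ and bounded by $C_{p,q}L^{1-q-p/2}a^{-p/2}b^{-q}$, and the passage to $(\xi,\eta)$ with the $\eta$-integral bounded by $C\xi^{2-p/2-q}$ are all correct (the Jacobian factors $ab$ absorb the singularities at $a=0$, $b=0$, and $1-q-p/2>-1$ keeps the $\xi=1$ endpoint integrable).

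The one genuine flaw is the last step. If you literally bound $(d+v)^{2-p/2-q}\le 1$ first, the ``resulting integral'' is $\int_0^{1-d} v^{-1/2}(2d+v)^{-1/2}(1-d-v)^{1-q-p/2}\,dv$, which is \emph{not} dominated by $B(1/2,2-q-p/2)$: at $d=0$ it equals $\int_0^1 v^{-1}(1-v)^{1-q-p/2}\,dv=\infty$, and for small $d$ it grows like $\log(1/d)$ --- precisely the non-uniformity at the apex that you yourself flag. The repair is one line and uses the full strength of the inequality you isolated: since $2-p/2-q\ge \tfrac12$, write $(d+v)^{2-p/2-q}=(d+v)^{1/2}(d+v)^{3/2-p/2-q}\le (2d+v)^{1/2}$ (using $d+v\le 1$ and $3/2-p/2-q\ge 0$), so that $(2d+v)^{-1/2}(d+v)^{2-p/2-q}\le 1$ pointwise; the remaining integral is then $(1-d)^{3/2-q-p/2}B(1/2,2-q-p/2)\le B(1/2,2-q-p/2)$, giving the uniform bound. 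A side remark: under the stated hypotheses $p<1$ and $q<1$ one automatically has $p+2q<3$, so your closing claim that any weaker condition than $p+2q\le 3$ would destroy uniformity overstates its role; within the lemma's parameter range the condition is never binding.
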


\medskip

We will need also the following elementary result about a  multiple beta-type integral.

\begin{lemma}
\label{lem-beta}
For any $\beta_1>-1,\ldots,\beta_n>-1$,
\[
\int_{T_n(t)}\prod_{j=1}^{n}(t_{j+1}-t_j)^{\beta_j}dt_1 \ldots dt_n=
\frac{\prod_{j=1}^{n}\Gamma(\beta_j+1)}{\Gamma(\sum_{j=1}^{n}\beta_j+n+1)} t^{\sum_{j=1}^n\beta_j +n},
\]
where $T_n(t)=\{(t_1,\ldots,t_n) \in (0,t)^n;t_1<\ldots<t_n\}$ and $t_{n+1}=t$.
\end{lemma}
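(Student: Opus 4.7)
The plan is to proceed by induction on $n$, using only the Beta function identity $\int_0^1 u^{a-1}(1-u)^{b-1}\,du = \Gamma(a)\Gamma(b)/\Gamma(a+b)$, valid for $a,b>0$. The assumption $\beta_j > -1$ guarantees that every integral appearing is absolutely convergent and supplies the positivity needed to apply the Beta identity.

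The base case $n=1$ is the direct evaluation $\int_0^t(t-t_1)^{\beta_1}dt_1 = t^{\beta_1+1}/(\beta_1+1)$, which matches the right-hand side after rewriting $1/(\beta_1+1) = \Gamma(\beta_1+1)/\Gamma(\beta_1+2)$. For the inductive step, I would denote the left-hand side by $I_n(t;\beta_1,\ldots,\beta_n)$ and use Fubini to peel off the outermost variable $t_n$:
\[
I_n(t;\beta_1,\ldots,\beta_n) = \int_0^t (t-t_n)^{\beta_n}\, I_{n-1}(t_n;\beta_1,\ldots,\beta_{n-1})\,dt_n.
\]
Applying the induction hypothesis to the inner factor and substituting $t_n = tu$ reduces the remaining one-variable integral to a Beta function with parameters $a = \sum_{j=1}^{n-1}\beta_j + n$ and $b = \beta_n+1$. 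The factor $\Gamma(\sum_{j=1}^{n-1}\beta_j + n)$ produced by the induction hypothesis cancels against the one coming from the Beta identity, leaving exactly the claimed expression.

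A non-inductive alternative is to perform the change of variables $u_j = t_j - t_{j-1}$ for $j=1,\ldots,n+1$ (with the convention $t_0=0$ and $t_{n+1}=t$), which has unit Jacobian and identifies $T_n(t)$ with the simplex $\{(u_1,\ldots,u_n) : u_j \geq 0,\ u_1+\cdots+u_n \leq t\}$, setting $u_{n+1}=t-u_1-\cdots-u_n$; the integrand becomes $u_2^{\beta_1}u_3^{\beta_2}\cdots u_{n+1}^{\beta_n}$, and the classical Dirichlet integral formula delivers the result at once. Either route is short and essentially routine — there is no substantive obstacle, since this is a well-known classical identity included in the paper purely for convenient reference in the sequel.
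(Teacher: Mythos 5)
Your proof is correct. Note that the paper itself states this lemma without proof, presenting it as an elementary fact, so there is no argument in the paper to compare against; your write-up simply supplies the missing verification. Both of your routes are sound: the induction peels off the largest variable $t_n$, and since the inner integral over $0<t_1<\cdots<t_{n-1}<t_n$ is exactly $I_{n-1}(t_n;\beta_1,\ldots,\beta_{n-1})$ (with $t_n$ now playing the role of the endpoint), the substitution $t_n=tu$ produces the Beta integral with parameters $a=\sum_{j=1}^{n-1}\beta_j+n>0$ and $b=\beta_n+1>0$, and the Gamma factors cancel as you describe, yielding the stated exponent $\sum_{j=1}^n\beta_j+n$. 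The alternative via the increments $u_j=t_j-t_{j-1}$ is equally valid: the integrand becomes $u_2^{\beta_1}\cdots u_{n+1}^{\beta_n}$ with $u_1$ carrying exponent $0$, so the Dirichlet integral formula with parameters $1,\beta_1+1,\ldots,\beta_n+1$ gives the result directly; the only point worth making explicit is that the first increment enters with exponent zero, contributing the harmless factor $\Gamma(1)$.
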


We consider the equation with truncated noise $L_N$:
\begin{align}
\label{wave} 
\begin{cases}
\dfrac{\partial^2 u_N}{\partial t^2} (t,x)=\Delta u_N(t,x)+\sigma(u_N(t,x))\dot{L}_{N}(t,x) \quad \quad t>0,x \in \bR^d \quad (d \leq 2) \\
u_N(0,x) = u_0(x), \quad \dfrac{\partial u_N}{\partial t}(0,x)=v_0(x) \quad \quad \quad \quad \qquad \ x \in \bR^d
\end{cases}
\end{align}

A predictable process $u_N$ is a {\bf solution} of \eqref{wave} if it satisfies the integral equation:
\begin{equation}
\label{eq-uN}
u_{N}(t,x)=w(t,x)+\int_0^t \int_{\bR^d}G_{t-s}(x-y)\sigma(u_{N}(s,y)) L_N(ds,dy).
\end{equation}

We say that two random fields $\{X(t,x);t\geq 0,x\in \bR^d\}$ and $\{Y(t,x);t\geq 0,x \in \bR^d\}$ are {\em modifications} of each other if $\bP(X(t,x)=Y(t,x))=1$ for almost all $(t,x) \in \bR_{+} \times \bR^d$.

\medskip

We are now ready to prove the existence of solution for the equation with truncated noise. Note that unlike the case of the heat equation considered in \cite{chong17-SPA}, we do not need to impose any additional conditions on $p,q$ and $\eta$, for a {\em fixed} truncation level $N$. The additional condition $\eta>d/q$ which is imposed in the proof of Theorem \ref{exist-th} below guarantees that we can paste together the solutions $u_N$ for different truncation levels $N$, to produce a solution for the equation with non-truncated noise.

\begin{theorem}
\label{exist-th-K}
(i) If $d=1$, suppose that there exists $0<q\leq 2$ such that $\int_{|z|>1}|z|^q\nu(dz)<\infty$. Then equation \eqref{wave}
has a 
solution $u_{N}=\{u_{N}(t,x);t\geq 0,x\in \bR\}$. Moreover,
\begin{equation}
\label{p-mom-finite-2}
\sup_{t \in [0,T]} \sup_{|x|\leq R}\bE |u_{N}(t,x)|^{2}<\infty,
\end{equation}
for any $T>0$ and $R>0$.

(ii) If $d=2$, suppose that Assumption A holds with $p<2$. Then equation \eqref{wave}
has a unique solution $u_{N}=\{u_{N}(t,x);t\geq 0,x\in \bR^d\}$. Moreover,
\begin{equation}
\label{p-mom-finite}
\sup_{t \in [0,T]} \sup_{|x|\leq R}\bE |u_{N}(t,x)|^{p}<\infty,
\end{equation}
for any $T>0$ and $R>0$, where $p$ is the constant from Assumption A.
\end{theorem}

\begin{proof} To provide a unified argument for both cases, we let $p=2$ when $d=1$.
We consider the sequence
$\{u_N^{(n)}(t,x)\}_{n\geq 0}$ of Picard's iterations (specific to the truncation level $N$), defined by $u_{N}^{(0)}(t,x)=w(t,x)$,
\begin{equation}
\label{picard}
u_{N}^{(n+1)}(t,x)=w(t,x)+\int_0^t \int_{\bR^d}G_{t-s}(x-y)\sigma(u_{N}^{(n)}(s,y)) L_N(ds,dy) \quad n\geq 0.
\end{equation}
By Lemma 6.2 of \cite{chong17-JTP}, $u_N^{(n)}$ has a predictable modification. We work with this modification when defining $u_{N}^{(n+1)}$.
By induction on $n$, it can be proved that for any $t>0$ and $x \in \bR^d$,
\[
\bE|u_{N}^{(n)}(t,x)|^p \leq C_{n,t}(1+|x|^{n\eta(p-q)}),
\]
where $C_{n,t}$ is a positive constant that depends on $n,t$ and is increasing in $t$. For this, we use Lemma \ref{lem-p-mom} and the fact that for any $\gamma>0$,
\[
\int_{0}^t \int_{\bR^d} G_{t-s}^p(x-y)(1+|y|^{\gamma})dyds \leq C_{\gamma,p,t} (1+|x|^{\gamma})
\]
where $C_{\gamma,p,t}$ is a positive constant that depends on $\gamma,p,t$ and is increasing in $t$. Hence, for any $t>0$ and $x \in \bR^d$, $u_N^{(n)}(t,x)$ is finite a.s.

 It suffices to prove that:
\begin{equation}
\label{aim}
\sum_{n\geq 1} \sup_{t \in [0,T]} \sup_{|x|\leq R} \|u_N^{(n)}(t,x)-u_{N}^{(n-1)}(t,x)\|_p<\infty.
\end{equation}
This will imply that $\{u_N^{(n)}(t,x)\}_{n\geq 0}$ is a Cauchy sequence in $L^p(\Omega)$, uniformly in $(t,x) \in [0,T] \times \{x \in \bR^d;|x|\leq R\}$. We denote its limit by $u_N(t,x)$, that is:
\begin{equation}
\label{unif-conv}
\sup_{(t,x) \in [0,T]}\sup_{|x|\leq R}\bE|u_N^{(n)}(t,x)-u_{N}(t,x)|^p \to 0 \quad \mbox{as} \quad n \to \infty.
\end{equation}

Once \eqref{aim} is shown, the rest of the proof will be the same as for the heat equation (Theorem 3.1 of \cite{chong17-SPA}). But there is a delicate part regarding the existence of a predictable modification of $u_N$. We include this argument here since it is not given in \cite{chong17-SPA}.

We fix $(t,x)$ and let $V_{N}^{(n)}(s,y)=G_{t-s}(x-y) u_N^{(n)}(s,y) h(y)^{\frac{p-q}{p}}$ for any $(s,y) \in (0,t) \times \bR^d$. Then $V_{N}^{(n)}$ is predictable, since $u_N^{(n)}$ is so. The proof of \eqref{aim} given below shows that
\[
\sum_{n\geq 1}\|V_{N}^{(n)}-V_{N}^{(n-1)}\|_{L^p(\Omega \times (0,t) \times \bR^d)}<\infty.
\]
Hence, $\{V_{N}^{(n)}\}_{n\geq 0}$ is a Cauchy sequence in
$L^p(\Omega \times (0,t) \times \bR^d)$. We denote by $V_{N}$ its limit in this space. So, there exists a subsequence $N' \subset \bN$ such that $V_{N}^{(n)}(\omega,s,y) \to V_{N}(\omega,s,y)$ as $n\to \infty$, for almost all $(\omega,s,y)$, i.e. for $(\omega,s,y) \in B^c$, for some set $B \subset \Omega \times (0,t) \times \bR^d$ with $(\bP\times {\rm Leb} \times {\rm Leb})(B)=0$. Therefore, $V_N$ is predictable. Due to the form of $V_N^{(n)}(s,y)$, for any $(\omega,s,y)\in B^c$ such that $|x-y|<t-s$, the sequence $\{u_N^{(n)}(\omega,s,y)\}_{n\geq 0}$ converges to some $u_N'(\omega,s,y)$ as $n\to \infty$, and
$V_{N}(\omega,s,y)=G_{t-s}(x-y)u_N'(\omega,s,y) h(y)^{\frac{p-q}{p}}$. Hence, $u_N'$ is predictable.
An argument based on Fubini's theorem, \eqref{unif-conv} and the uniqueness of the limit shows that $u_N'$ is a modification of $u_N$.

Finally, to prove that $u_N'$ verifies the integral equation \eqref{eq-uN}, we let $n \to \infty$ in \eqref{picard}. We let
$v_{N}^{(n)}(s,y)=G_{t-s}(x-y) \sigma(u_N^{(n)}(s,y))$
and $v_{N}(s,y)=G_{t-s}(x-y) \sigma(u_N'(s,y))$. Then $\{v_N^{(n)}\}_{n\geq 0}$ converges to $v_N$ as $n \to \infty$ in the Daniell mean $\|\cdot\|_{L_N,p}$, which implies the convergence of the stochastic integrals of these processes with respect to $L_N$.

\medskip

We proceed now with the proof of \eqref{aim}. Here $C$ is a constant which depends on $N,p,q,\eta$, and may be different from line to line.
We consider separately the cases $p<1$ and $p\geq 1$.

\medskip

{\em Case 1. $p<1$.}
By Lemma \ref{lem-p-mom} and the Lipschitz property of $\sigma$,
\[
\bE|u_N^{(n+1)}(t,x)-u_{N}^{(n)}(t,x)|^p \leq C\int_0^t \int_{\bR^d} G_{t-s}^{p}(x-y)\bE|u_N^{(n)}(s,y)-u_{N}^{(n-1)}(s,y)|^p h(y)^{p-q} dyds.
\]
Iterating this inequality, we obtain:
\begin{align*}
\bE|u_N^{(n)}(t,x)-u_{N}^{(n-1)}(t,x)|^p & \leq C^n \int_{T_n(t)} \int_{(\bR^d)^n} \prod_{i=1}^{n}G_{t_{i+1}-t_i}^p(x_{i+1}-x_i) \prod_{i=1}^{n}h(x_i)^{p-q}d{\bf x}d{\bf t},
\end{align*}
where ${\bf t}=(t_1,\ldots,t_n)$, ${\bf x}=(x_1,\ldots,x_n)$ and we use the convention $t_{n+1}=t$ and $x_{n+1}=x$.

We use following (generalized) H\"older's inequality: for any non-negative integrable function $f$ on $(\bR^d)^n$, and non-negative functions $g_1,\ldots,g_n$ on $\bR^d$,
\begin{align*}
\int_{(\bR^d)^n} \prod_{i=1}^{n}g_i(x_i)f(x_1,\ldots,x_n) d{\bf x} & \leq \prod_{i=1}^{n} \left(\int_{(\bR^d)^n} g_i^n(x_i) f(x_1,\ldots,x_n)d{\bf x}\right)^{1/n}.
\end{align*}
We apply this to
$f(x_1,\ldots,x_n)=\prod_{i=1}^{n}G_{t_{i+1}-t_i}^p(x_{i+1}-x_i)$ and $g_i=h^{p-q}$ for all $i\leq n$:
\begin{align*}
& \bE|u_N^{(n)}(t,x)-u_{N}^{(n-1)}(t,x)|^p \leq C^n \int_{T_n(t)}\prod_{i=1}^{n} \left(\int_{(\bR^d)^n} \prod_{j=1}^{n}G_{t_{j+1}-t_j}^p(x_{j+1}-x_j) h(x_i)^{n(p-q)}d{\bf x}\right)^{1/n}d{\bf t} \\
& \quad \quad \quad =  C^n \int_{T_n(t)}\prod_{i=1}^{n} \left(\int_{(\bR^d)^n} \prod_{j=1}^{n}G_{t_{j+1}-t_j}^p(x_j) h(x-\sum_{j=i}^n x_j)^{n(p-q)}d{\bf x}\right)^{1/n}d{\bf t} \\
& \quad \quad \quad \leq C^n \frac{1}{n}\sum_{i=1}^{n}\int_{T_n(t)} \int_{(\bR^d)^n} \prod_{j=1}^{n}G_{t_{j+1}-t_j}^p(x_j) h(x-\sum_{j=i}^n x_j)^{n(p-q)}d{\bf x}d{\bf t},
\end{align*}
where for the last line we used the fact that the geometric mean is smaller than the arithmetic mean. We now use the special form of $h$:
\[
h(x)^{n(p-q)}=(1+|x|^{\eta})^{n(p-q)}\leq 2^{n(p-q)}(1+|x|^{n\gamma}) \quad \mbox{for all $x \in \bR^d$},
\]
where $\gamma=\eta(p-q)$. Hence, for any $x \in \bR^d$ with $|x|\leq R$, we have:
\[
h(x-\sum_{j=i}^n x_j)^{n(p-q)} \leq 2^{n(p-q+\gamma)}(1+R^{n\gamma}+|\sum_{j=i}^{n}x_j|^{n\gamma}).
\]
It follows that
\begin{equation}
\label{p-unK}
\bE|u_N^{(n)}(t,x)-u_{N}^{(n-1)}(t,x)|^p \leq C^n \Big\{\big(1+R^{n\gamma}\big) A^{(n)}(t)+B^{(n)}(t)\Big\},
\end{equation}
where
\[
A^{(n)}(t)=\int_{T_n(t)} \int_{(\bR^d)^n} \prod_{j=1}^{n}G_{t_{j+1}-t_j}^p(x_j)d{\bf x}d{\bf t}
\]
and $B^{(n)}(t)=\frac{1}{n}\sum_{i=1}^{n}B_i^{(n)}(t)$, with
\begin{equation}
\label{def-B-in}
B_i^{(n)}(t)=\int_{T_n(t)} \int_{(\bR^d)^n} \prod_{j=1}^{n}G_{t_{j+1}-t_j}^p(x_j) |\sum_{j=i}^n x_j|^{n\gamma}d{\bf x}d{\bf t}.
\end{equation}

Using \eqref{p-G} and Lemma \ref{lem-beta}, we see that:

\[
A^{(n)}(t)=C^n \int_{T_n(t)} \prod_{j=1}^{n}(t_{j+1}-t_j)^{a}d{\bf t}=C^n \frac{t^{(a+1)n}}{\Gamma((a+1)n+1)}
\quad
\mbox{with}
\
a=
\left\{
\begin{array}{ll} 1 & \mbox{if $d=1$} \\
 2-p & \mbox{if $d=2$}
\end{array} \right.
\]

Hence
\begin{equation}
\label{sum-A-finite}
\sum_{n\geq 1}C^n\sup_{t\leq T} A^{(n)}(t) <\infty.
\end{equation}

It remains to show that a similar relation holds for $B^{(n)}(t)$. This involves a study of convolutions of the form $G_t^p*G_s^p$. We treat separately the cases $d=1$ and $d=2$.

In the estimates below, we will use the convention $\prod_{\emptyset}=1$.

\medskip

{\em a) Case $d=1$.}
Using the fact that $G_t^p(x)dx=2^{1-p}G_t(x)$ and $\int_{\bR}G_t(x)=t$, we have
\begin{align*}
B_{i}^{(n)}(t)&=C^n \int_{T_n(t)} \int_{\bR^n} \prod_{j=1}^{n}G_{t_{j+1}-t_j}(x_j)|\sum_{j=i}^n x_j|^{n\gamma}d{\bf x} d{\bf t}\\
&=C^n \int_{T_n(t)} \prod_{j=1}^{i-1}(t_{j+1}-t_{j})\int_{\bR^{n-i+1}} \prod_{j=i}^{n} G_{t_{j+1}-t_{j}}(x_j)|\sum_{j=i}^n x_j|^{n\gamma} dx_i \ldots dx_{n} d{\bf t}.
\end{align*}
If $i=n$, there is no convolution involved, since
\begin{align*}
B_n^{(n)}(t)&=C^n \int_{T_n(t)} \prod_{j=1}^{n-1}(t_{j+1}-t_{j})\int_{\bR}  G_{t-t_{n}}(x_n)|x_n|^{n\gamma} dx_{n} d{\bf t} \\
&=C^n \int_{T_n(t)} \prod_{j=1}^{n-1}(t_{j+1}-t_{j})(t-t_n)^{n\gamma+1}d{\bf t}\leq \frac{C^n t^{n(\gamma+2)}}{n!}.
\end{align*}
If $i\leq n-1$, we use the change of variable $x_i \to z=\sum_{j=i}^{n}x_j$, followed by the semigroup-type property \eqref{semigroup-1}. We obtain:
\begin{align*}
B_i^{(n)}(t) &=C^n \int_{T_n(t)} \prod_{j=1}^{i-1}(t_{j+1}-t_{j})\int_{\bR^{n-i+1}} \prod_{j=i+1}^{n}G_{t_{j+1}-t_{j}}(x_{j}) \\
& \quad \quad \quad \qquad \qquad G_{t_{i+1}-t_i}(z-\sum_{j=i+1}^n x_j)|z|^{n\gamma} dz dx_{i+1} \ldots dx_{n} d{\bf t}\\
&=C^n \int_{\bR}|z|^{n\gamma} \int_{T_n(t)} \prod_{j=1}^{i-1}(t_{j+1}-t_{j}) \int_{\bR^{n-i-1}} \prod_{j=i+2}^{n}  G_{t_{j+1}-t_{j}}(x_{j})
\\
& \quad \quad \quad \quad \quad \quad (G_{t_{i+2}-t_{i+1}}* G_{t_{i+1}-t_i})(z-\sum_{j=i+2}^n x_{j})dx_{i+2}\ldots dx_n d{\bf t}dz \\
& \leq C^n \int_{\bR}|z|^{n\gamma}\int_{T_n(t)} \prod_{j=1}^{i-1}(t_{j+1}-t_{j})(t_{i+2}-t_i)
\int_{\bR^{n-i-2}} \prod_{j=i+3}^{n}  G_{t_{j+1}-t_{j}}(x_{j})
\\
& \quad \quad \quad \quad \quad \quad (G_{t_{i+3}-t_{i+2}}* G_{t_{i+2}-t_i})(z-\sum_{j=i+3}^n x_{j})dx_{i+3}\ldots dx_n d{\bf t}dz.
\end{align*}
We use again inequality \eqref{semigroup-1}. We continue in this manner. At the last step, we obtain:
\begin{align*}
B_i^{(n)}(t) & \leq C^n \int_{\bR}|z|^{n\gamma}\int_{T_n(t)} \prod_{j=1}^{i-1}(t_{j+1}-t_j) (t_{i+2}-t_i) \ldots (t_n-t_i)
(G_{t-t_n}*G_{t_n-t_i})(z) d{\bf t}dz \\
&\leq C^n \int_{T_n(t)} \prod_{j=1}^{i-1}(t_{j+1}-t_j) (t_{i+2}-t_i) \ldots (t_n-t_i) (t-t_i)\left( \int_{\bR}|z|^{n\gamma} G_{t-t_i}(z) dz \right) d{\bf t}\\
& = C^n  \int_{T_n(t)} \prod_{j=1}^{i-1}(t_{j+1}-t_j) (t_{i+2}-t_i) \ldots (t_n-t_i) (t-t_i)^{n\gamma+2} d{\bf t} \\
& \leq C^n t^{n-i+n\gamma+1}\int_{T_n(t)} \prod_{j=1}^{i-1}(t_{j+1}-t_j)d{\bf t}= \frac{C^n t^{n(\gamma+2)}}{\Gamma(i+n)} \leq \frac{C^n t^{n(\gamma+2)}}{n!},
\end{align*}
using Lemma \ref{lem-beta} and the monotonicity of the $\Gamma$ function. Hence
\begin{equation}
\label{ineq-B}
\sum_{n\geq 1}C^n\sup_{t\leq T} B^{(n)}(t) <\infty.
\end{equation}

\medskip

{\em b) Case $d=2$.} If $i=n$, using relations \eqref{p-G} and \eqref{Gp-gamma} and Lemma \ref{lem-beta}, we obtain:
\begin{align*}
B_n^{(n)}(t)&=C^{n-1} \int_{T_n(t)}  \prod_{j=1}^{n-1}(t_{j+1}-t_j)^{2-p}\int_{\bR^2}G_{t-t_n}^p(x_n)|x_n|^{n\gamma}dx_n d{\bf t}\\
&\leq C^n \int_{T_n(t)} \prod_{j=1}^{n-1}(t_{j+1}-t_j)^{2-p}(t-t_n)^{2-p+n\gamma}d{\bf t}\\
&=C^n \frac{\Gamma(3-p)^{n-1}\Gamma(n\gamma +3-p)}{\Gamma(n(3-p+\gamma)+1)}t^{n(3-p+\gamma)} \leq C^n \frac{1}{(n!)^{3-p}}t^{n(3-p+\gamma)}.
\end{align*}
(For the last inequality, we used Stirling's formula.) If $i=n-1$, then by \eqref{p-G}, we have
\begin{align*}
B_{n-1}^{(n)}(t) &= C^{n-2} \int_{T_n(t)} \prod_{j=1}^{n-2}(t_{j+1}-t_j)^{2-p}\int_{(\bR^2)^2}G_{t-t_n}^p(x_n)
G_{t_n-t_{n-1}}^p(x_{n-1})|x_{n-1}+x_{n}|^{n\gamma}dx_{n-1} dx_n d{\bf t}\\
&=C^{n-2}\int_{\bR^2}|z|^{n\gamma}\int_{T_n(t)} \prod_{j=1}^{n-2}(t_{j+1}-t_j)^{2-p} (G_{t-t_n}^p * G_{t_n-t_{n-1}}^p)(z)d{\bf t}dz.
\end{align*}
Fix an arbitrary $r \in (\frac{1}{2},1)$. Since $p<1<2r$, we can
pass from $G^p$ to $G^{2r}$, using \eqref{G-pq-ineq}. So,
\[
B_{n-1}^{(n)}(t) \leq C^{n} \int_{\bR^2}|z|^{n\gamma} \int_{T_{n}(t)} \prod_{j=1}^{n-2}(t_{j+1}-t_j)^{2-p}(t-t_n)^{2r-p} (t_n-t_{n-1})^{2r-p}(G_{t-t_n}^{2r} * G_{t_n-t_{n-1}}^{2r})(z) d{\bf t}dz.
\]
We now use estimate \eqref{I-rt-z} for the $dt_n$ integral on $(t_{n-1},t)$, followed by \eqref{Gp-gamma}. We obtain:
\begin{align*}
B_{n-1}^{(n)}(t) & \leq C^{n+1}\int_{T_{n-1}(t)}
\prod_{j=1}^{n-2}(t_{j+1}-t_j)^{2-p}(t-t_{n-1})^{2(r-p+1)}
\left(\int_{\bR^2} |z|^{n\gamma} G_{t-t_{n-1}}^{2r-1}(z)dz \right) dt_1 \ldots dt_{n-1}\\
&\leq  C^{n+2}\int_{T_{n-1}(t)}
\prod_{j=1}^{n-2}(t_{j+1}-t_j)^{2-p}
 (t-t_{n-1})^{5-2p+n\gamma}dt_1 \ldots dt_{n-1}\\
 &=C^{n+2} \frac{\Gamma(3-p)^{n-2}\Gamma(n\gamma+6-2p)}{\Gamma(n(3-p+\gamma)+1)}
 t^{n(3-p+\gamma)} \leq C^n \frac{1}{(n!)^{3-p}}t^{n(3-p+\gamma)},
\end{align*}
using Lemma \ref{lem-beta} and Stirling's formula.

If $i\leq n-2$ and $n\geq 3$, we use \eqref{p-G} and the change of variable $x_i \mapsto z=\sum_{j=i}^n x_j$:
\begin{align*}
B_{i}^{(n)}(t) &= C^{i-1} \int_{T_n(t)} \prod_{j=1}^{i-1}(t_{j+1}-t_j)^{2-p}\int_{(\bR^2)^{n-i+1}}
\prod_{j=i}^{n} G_{t_{j+1}-t_{j}}^p(x_{j})|\sum_{j=i}^n x_{j}|^{n\gamma}dx_{i} \ldots dx_n d{\bf t}\\
&=C^{i-1}\int_{\bR^2}|z|^{n\gamma}\int_{T_n(t)} \prod_{j=1}^{i-1}(t_{j+1}-t_j)^{2-p} \int_{(\bR^2)^{n-i-1}}\prod_{j=i+2}^{n} G_{t_{j+1}-t_{j}}^p(x_{j}) \\
& \qquad \quad \quad  (G_{t_{i+2}-t_{i+1}}^p * G_{t_{i}-t_{i+1}}^p)(z-\sum_{j=i+2}^n x_j)dx_{i+2} \ldots dx_n d{\bf t}dz.
\end{align*}
In the convolution above, we pass from $G^p$ to $G^{2r}$, using \eqref{G-pq-ineq}. We get:
\begin{align*}
& B_{i}^{(n)}(t)  \leq C^{i+1} \int_{\bR^2} |z|^{n\gamma} \int_{\{0<t_1<\ldots<t_i<t_{i+2}<\ldots<t_n<t\}} \prod_{j=1}^{i-1}(t_{j+1}-t_j)^{2-p} \int_{(\bR^2)^{n-i-1}} \prod_{j=i+2}^{n} G_{t_{j+1}-t_{j}}^p(x_{j})\\
& \quad \quad \quad \left(\int_{t_i}^{t_{i+2}}(t_{i+2}-t_{i+1})^{2r-p} (t_{i+1}-t_{i})^{2r-p} (G_{t_{i+2}-t_{i+1}}^{2r} * G_{t_{i}-t_{i+1}}^{2r})(z-\sum_{j=i+2}^n x_j) dt_{i+1}\right) \\
& \quad \quad \quad \quad \quad \quad dx_{i+2} \ldots dx_n   dt_1 \ldots dt_i dt_{i+2}\ldots dt_n dz.
\end{align*}
We use estimate \eqref{I-rt-z} for the $dt_{i+1}$ integral on $(t_{i},t_{i+2})$. We obtain:
\begin{align*}
B_i^{(n)}(t) & \leq C^{i+2} \int_{\bR^2} |z|^{n\gamma} \int_{\{0<t_1<\ldots<t_i<t_{i+2}<\ldots<t_n<t\}} \prod_{j=1}^{i-1}(t_{j+1}-t_j)^{2-p} (t_{i+2}-t_i)^{2(r-p+1)}\\
& \int_{(\bR^2)^{n-i-1}} \prod_{j=i+2}^{n} G_{t_{j+1}-t_{j}}^p(x_{j})
 G_{t_{i+2}-t_i}^{2r-1}(z-\sum_{j=i+2}^n x_j)dx_{i+2} \ldots dx_n
dt_1 \ldots dt_i dt_{i+2}\ldots dt_n dz.
\end{align*}
We now pass from $G_{t_{i+3}-t_{i+2}}^p(x_{i+2})$ to $G_{t_{i+3}-t_{i+2}}^{2r}(x_{i+2})$, again using \eqref{G-pq-ineq}. Hence
\begin{align*}
& B_i^{(n)}(t)  \leq C^{i+3} \int_{\bR^2} |z|^{n\gamma} \int_{\{0<t_1<\ldots<t_i<t_{i+2}<\ldots<t_n<t\}} \prod_{j=1}^{i-1}(t_{j+1}-t_j)^{2-p} (t_{i+2}-t_i)^{2(r-p+1)}
\\
& \quad (t_{i+3}-t_{i+2})^{2r-p}
 \int_{(\bR^2)^{n-i-2}} \prod_{j=i+3}^{n} G_{t_{j+1}-t_{j}}^p(x_{j})
(G_{t_{i+3}-t_{i+2}}^{2r}* G_{t_{i+2}-t_i}^{2r-1})(z-\sum_{j=i+3}^n x_j) \\
& \qquad \qquad \qquad \qquad dx_{i+3}\ldots dx_n dt_1 \ldots dt_i dt_{i+2}\ldots dt_n dz \\
& \leq C^{i+3} \int_{\bR^2} |z|^{n\gamma} \int_{\{0<t_1<\ldots<t_i<t_{i+3}<\ldots<t_n<t\}} \prod_{j=1}^{i-1}(t_{j+1}-t_j)^{2-p} (t_{i+3}-t_i)^{4r-3p+2}\\
& \quad
\int_{(\bR^2)^{n-i-2}} \prod_{j=i+3}^{n} G_{t_{j+1}-t_{j}}^p(x_{j}) \left(\int_{t_i}^{t_{i+3}} (G_{t_{i+3}-t_{i+2}}^{2r}* G_{t_{i+2}-t_i}^{2r-1})(z-\sum_{j=i+3}^n x_j) dt_{i+2}\right) \\
& \qquad \qquad \qquad \qquad dx_{i+3}\ldots dx_n   dt_1 \ldots dt_i dt_{i+3}\ldots dt_n dz.
\end{align*}
By Lemma \ref{lem-BNZ2}, the $dt_{i+2}$ integral is bounded by
$C (t_{i+3}-t_i)^{4(1-r)}1_{\{|z-\sum_{j=i+3}^n x_j|<t_{i+3}-t_i\}}$.
Note that each term $G_{t_{j+1}-t_j}^p(x_j)$ contains the indicator $1_{\{|x_j|<t_{j+1}-t_j\}}$, and
\[
\prod_{j=i+3}^n 1_{\{|x_j|<t_{j+1}-t_j\}} 1_{\{|z-\sum_{j=i+3}^n x_j|<t_{i+3}-t_i\}} \leq 1_{\{|z|<t-t_i\}}.
\]
Using the fact that $\int_{\{|z|<t-t_i\}} |z|^{n\gamma}dz \leq C (t-t_i)^{n\gamma+2} \leq C t^{n\gamma+2}$, followed by \eqref{p-G} and Lemma \ref{lem-beta}, we obtain:
\begin{align*}
B_i^{(n)}(t)  &\leq C^{i+5} t^{n\gamma+2} \int_{\{0<t_1<\ldots<t_i<t_{i+3}<\ldots<t_n<t\}} \prod_{j=1}^{i-1}(t_{j+1}-t_j)^{2-p} (t_{i+3}-t_i)^{6-3p}\\
& \quad \quad \quad \int_{(\bR^2)^{n-i-2}} \prod_{j=i+3}^n G_{t_{j+1}-t_j}^p(x_j) dx_{i+3}\ldots dx_n dt_1 \ldots dt_i dt_{i+3} \ldots dt_n\\
&= C^{n+3} t^{n\gamma+2} \int_{\{0<t_1<\ldots<t_i<t_{i+3}<\ldots<t_n<t\}} \prod_{j=1}^{i-1}(t_{j+1}-t_j)^{2-p} (t_{i+3}-t_i)^{6-3p}\\
& \quad \quad \quad \qquad \qquad \prod_{j=i+3}^n
(t_{j+1}-t_j)^{2-p}  dt_1 \ldots dt_i dt_{i+3} \ldots dt_n\\
&=C^{n+3} t^{n\gamma+2} \frac{\Gamma(3-p)^{n-3}\Gamma(7-3p)}{\Gamma(n(3-p)-2+1)}t^{n(3-p)-2} \leq C^n \frac{1}{(n!)^{3-p}}t^{n(3-p+\gamma)}.
\end{align*}
For the last inequality, we used the fact that for any $a>0$ and $b\in \bR$, $\Gamma(an+b+1) \geq C_{a,b}^n (n!)^a$ , where $C_{a,b}>0$ is a constant depending on $a$ and $b$. This is a consequence of Stilrling's formula and the fact that $\Gamma(x+b) \sim \Gamma(x)x^b$ as $x\to \infty$.

To summarize, we have proved that:
\[
B_i^{(n)} \leq C^n \frac{1}{(n!)^{3-p}}t^{n(3-p+\gamma)} \quad \mbox{for all $i=1,\ldots,n$}.
\]
This shows that relation \eqref{ineq-B} also holds in the case $d=2$.

\medskip

{\em Case 2. Assume that $p\geq 1$.} In this case,
we work with the function $G_{t,p}(x)=G_t^p(x)+G_t(x)$ instead of $G_t^p(x)$.
Relation \eqref{p-unK} holds using the same argument as in case $p<1$, but with constants $A^{(n)}(t)$ and $B_i^{(n)}(t)$ given by:
\begin{align*}
A^{(n)}(t)&=\int_{T_n(t)} \int_{(\bR^d)^n} \prod_{j=1}^{n}G_{t_{j+1}-t_j,p}(x_j)d{\bf x}d{\bf t}\\
B_i^{(n)}(t)&=\int_{T_n(t)} \int_{(\bR^d)^n} \prod_{j=1}^{n}G_{t_{j+1}-t_j,p}(x_j) |\sum_{j=i}^n x_j|^{n\gamma}d{\bf x}d{\bf t}.
\end{align*}

We treat separately the cases $d=1$ and $d=2$.

\medskip

a) {\em Case $d=1$.} Since $G_{t}^p(x)=2^{1-p}G_t(x)$, we can use the same argument as for $p<1$.

\medskip

b) {\em Case $d=2$.} This case is treated similarly to the case $p<1$. We use the following properties. For any $t>0$, $\int_{\bR^2}G_{t,p}(x)dx=c_p t^{2-p}+t$, with $c_p=\frac{(2\pi)^{1-p}}{2-p}$. For any $\gamma>0$,
\[
\int_{\bR^2}G_{t,p}(x)|x|^{\gamma}dx \leq c_{p,T} \, t^{2-p+\gamma} \quad \mbox{for all} \ t \leq T,
\]
with $c_{p,T}=c_p+T^{p-1}$. For any $r \in (\frac{1}{2},1)$ with $p<2r$, we have
\[
G_{t,p}(x)\leq c_{r,p,T} t^{2r-p} G_t^{2r}(x) \quad \mbox{for any} \ t \leq T,
\]
with $c_{r,p,T}=(2\pi)^{2r-1}(1+T^{p-1})$.
\end{proof}

The following result gives the existence of solution to equation \eqref{wave}.

\begin{theorem}
\label{exist-th}
Under the hypotheses of Theorem \ref{exist-th-K},
equation \eqref{wave-eq} has a solution $u$. Moreover, there exists a sequence $(\tau_N)_{N \geq 1}$ of stopping times with $\tau_{N} \uparrow \infty$ a.s. such that
\begin{equation}
\label{stop}
\sup_{t \in [0,T]} \sup_{|x|\leq R}\bE\big[|u(t,x)|^p 1_{\{t \leq \tau_N\}} \big]<\infty,
\end{equation}
for any $T>0$, $R>0$ and $N \geq 1$, where $p=2$ if $d=1$ and $p$ is the constant from Assumption A if $d=2$.
\end{theorem}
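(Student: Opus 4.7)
The plan is to follow the localization scheme from the proof of Theorem 3.1 in \cite{chong17-SPA} (there used for the heat equation) and build $u$ by patching together the solutions $u_N$ of the truncated equation provided by Theorem~\ref{exist-th-K}. The key step is to produce suitable stopping times $\tau_N$. Fix $\eta > d/q$ so that $\int_{\bR^d} h(x)^{-q}\,dx < \infty$ for $h(x) = 1+|x|^\eta$, and define
\[
\tau_N := \inf\Bigl\{ t > 0 : \int_0^t\!\!\int_{\bR^d}\!\!\int_{\{|z|>Nh(x)\}} J(ds,dx,dz) > 0 \Bigr\},
\]
the first time an atom $(T_i,X_i,Z_i)$ of $J$ with $|Z_i|>Nh(X_i)$ appears. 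Then $(\tau_N)_N$ is a non-decreasing sequence of stopping times, and $\{\tau_N > T\}$ has probability $\exp(-T\lambda_N)$, where
\[
\lambda_N = \int_{\bR^d}\nu(\{|z|>Nh(x)\})\,dx \leq \gamma_2\, N^{-q}\int_{\bR^d}h(x)^{-q}\,dx,
\]
by Markov's inequality and Assumption~A. Since $\lambda_N\to 0$, $\bP(\tau_N\leq T)\to 0$ for every $T>0$, and monotonicity upgrades this to $\tau_N\uparrow\infty$ a.s.

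On the event $\{t\leq \tau_N\}$, the random measures $L$, $L_N$ and $L_{N'}$ (for $N'\geq N$) all agree on Borel subsets of $[0,t]\times\bR^d$, since they differ only through atoms with $|Z_i|>Nh(X_i)$, none of which have occurred before $\tau_N$. A standard localization argument combined with the uniqueness in Theorem~\ref{exist-th-K} then yields $u_{N'}(t,x) = u_N(t,x)$ a.s.\ on $\{t\leq \tau_N\}$ for all $N'\geq N$. Set $u(t,x) := u_N(t,x)$ on $\{t<\tau_N\}$: this is unambiguous by consistency, predictable (by consistency of the predictable modifications of the $u_N$), and satisfies the integral equation for $u$ on $\{t<\tau_N\}$. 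Sending $N\to\infty$ and using $\tau_N\uparrow\infty$, $u$ is defined a.s.\ on $\bR_+\times\bR^d$ and solves \eqref{wave-eq}. The moment bound \eqref{stop} is then immediate since
\[
\bE\bigl[|u(t,x)|^p 1_{\{t\leq\tau_N\}}\bigr] = \bE\bigl[|u_N(t,x)|^p 1_{\{t\leq\tau_N\}}\bigr] \leq \bE|u_N(t,x)|^p,
\]
and the right-hand side is uniformly bounded on $[0,T]\times\{|x|\leq R\}$ by Theorem~\ref{exist-th-K}.

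The main obstacle is not the probabilistic bookkeeping above but the careful identification of the two stochastic integrals (against $L$ and $L_N$) on the random interval $[0,\tau_N]$: since these integrals are constructed via the Daniell mean rather than as pathwise sums over atoms, a stopping argument analogous to the subsequential $L^p$-convergence argument used at the end of the proof of Theorem~\ref{exist-th-K} is needed to verify that the patched object $u$ is predictable and really satisfies \eqref{wave-eq} in the sense of the Daniell integral.
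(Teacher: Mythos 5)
Your proposal is correct and follows essentially the same route as the paper: the same stopping times $\tau_N$, the patching $u(t,x)=u_N(t,x)$ on $\{t\leq \tau_N\}$ via the consistency $u_{N'}=u_N$ on that event, and the bound \eqref{stop} read off directly from \eqref{p-mom-finite} of Theorem \ref{exist-th-K}. The only cosmetic difference is that you prove $\tau_N\uparrow\infty$ by the explicit Poisson-rate computation (with $\eta>d/q$ ensuring $\int_{\bR^d}h(x)^{-q}dx<\infty$), whereas the paper simply cites Lemma 3.2 of \cite{chong17-SPA}, and both you and the paper delegate the Daniell-mean identification of the stopped integrals to the argument of Theorem 3.1 of \cite{chong17-SPA}.
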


\begin{proof} Let $h(x)=1+|x|^{\eta}$, where $\eta>d/q$ and $q$ is the constant from Assumption A. Let
\begin{equation}
\label{def-tau}
\tau_N=\inf \left\{t>0; J([0,t] \times \{(x,z);|z|>N h(x)\})>0\right\},
\end{equation}
Since the set $[0,t] \times \{(x,z);|z|>N h(x)\}$ is unbounded, $J([0,t] \times \{(x,z);|z|>N h(x)\})$ may be infinite, which would mean that $\tau_N$ is not well-defined. This delicate issue is addressed in the proof of Lemma 3.2 of \cite{chong17-SPA}, where it is proved that for any $T>0$ and $N\geq 1$, with probability 1,
\begin{equation}
\label{eq1}
\mbox{$J$ has finitely many points in $[0,T] \times \{(x,z);|z|>N h(x)\}$.}
\end{equation}
Since the details of this argument are missing in \cite{chong17-SPA}, we include them below. Let $V_0=\emptyset$ and $V_n=\{x \in \bR^d; |x|\leq b_n\}$ be such that ${\rm Leb}(V_n)=n$. Let $U_n=V_n\verb2\2 V_{n-1}$ for any $n\geq 1$. Then $(U_n)_{n\geq 1}$ is a partition of $\bR^d$ with ${\rm Leb}(U_n)=1$. For fixed $T>0$ and $N \geq 1$, let $$F_n=[0,T] \times \{(x,z)\in U_n \times \bR; |z|>Nh(x)\}.$$ Under the condition $\eta>d/q$, it can be proved that
$\sum_{n\geq 1}P(J(F_n)>0)<\infty$. By the Borel-Cantelli lemma, it follows that with probability 1, there exists $n_0 \geq 1$ such that $J(F_n)=0$ for all $n\geq n_0$. This means that, with probability 1, $J$ has no points in the set
$$\bigcup_{n\geq n_0}F_n=[0,T]\times \{(x,z); |x|>b_{n_0-1},|z|>Nh(x)\}.$$
Since $J$ has finitely many points in the set $[0,T]\times \{(x,z) ; |x|\leq b_{n_0-1},|z|>Nh(x)\}$, relation \eqref{eq1} follows.

The argument above shows that with probability 1, for any $T\in \bQ_{+}$, there exists $N_0$ large enough such that for all $N\geq N_0$,
$J([0,T]\times \{(x,z); |z|>Nh(x)\})=0$ (and therefore $\tau_N>T$). This proves that $\tau_N \uparrow \infty$ a.s. when $N \to \infty$.

Let $u_N$ be the process given by Theorem \ref{exist-th-K}.
As in the last part of the proof of Theorem 3.1 of \cite{chong17-SPA}, it can be proved that for any $N\geq 1$,
$u_N(t,x)=u_{N+1}(t,x)$ a.s. on $\{t \leq \tau_{N}\}$. This argument uses the fact that on the event $\{\tau_N>T\}$
\[
L([0,t]\times A)=L_N([0,t]\times A)=L_{N'}([0,t]\times A)
\]
for any $N'>N,t\in [0,T]$ and $A \in \cB_b(\bR^d)$.
Moreover, the process $u=\{u(t,x);t\geq 0,x \in \bR^d\}$ given by
$ u(t,x)= u_{N}(t,x)$ if $t \leq \tau_{N}$ is a solution to equation \eqref{wave-eq}.
\end{proof}

It was suggested by the referee that the second condition in \eqref{pq-cond} (about the existence of $q$) and special form $h(x)=1+|x|^{\eta}$ may not be needed, and one can consider simply $h(x)=1$. This would be right if the goal would be to prove the existence of solution for the equation with truncated noise. Indeed, this truncation procedure was used in \cite{B14} for the $\alpha$-stable L\'evy noise. The problem is to paste together these solutions to produce a solution to equation \eqref{wave-eq}. We explain this below.
Consider the truncated noise
\[
\bar{L}_{N}(A)=b|A|+\int_{A \times \{|z|\leq 1\}}z \widetilde{J}(dt,dx,dz)+
\int_{A \times \{1<|z|\leq N\}}z J(dt,dx,dz).
\]
Lemma \ref{lem-p-mom} remains valid with $h(x)=1$, if $p \in (0,2]$ is a value such that $\int_{|z|\leq 1}|z|^p\nu(dz)<\infty$.
We have the following result.

\begin{theorem}
a) If $d=1$, equation \eqref{wave} with noise $\bar{L}_{N}$ instead of $L_N$ has a unique solution $\bar{u}_N$, and this solution satisfies:
\[
\sup_{(t,x) \in [0,T]}\bE|\bar{u}_N(t,x)|^2<\infty.
\]
b) If $d=2$ and there exists $p \in (0,2)$ such that $\int_{|z|\leq 1}|z|^p\nu(dz)<\infty$, then equation \eqref{wave} with noise $\bar{L}_{N}$ instead of $L_N$ has a unique solution $\bar{u}_N$, and this solution satisfies:
\[
\sup_{(t,x) \in [0,T]}\bE|\bar{u}_N(t,x)|^p<\infty.
\]
\end{theorem}

\begin{proof} If $d=1$, we let $p=2$. In both cases, $
\int_0^t \int_{\bR^d}G_{t-s}^p(x-y)dyds<\infty$.
Define the Picard iterations: $\bar{u}_{N}^{(0)}(t,x)=w(t,x)$ and
\[
\bar{u}_{N}^{(n+1)}(t,x)=w(t,x)+\int_0^t \int_{\bR^d}G_{t-s}(x-y)\sigma(\bar{u}_{N}^{(n)}(s,y))\bar{L}_{N}(ds,dy) \quad n\geq 0.
\]
Let $H_n(t)=\sup_{t \in [0,T]}\bE|\bar{u}_{N}^{(n)}(t,x)-\bar{u}_{N}^{(n-1)}(t,x)|^p$. By Lemma \ref{lem-p-mom} with $h(x)=1$, we obtain:
\[
H_{n+1}(t)\leq C \int_0^t H_n(s) J(t-s)ds
\]
where $J(t-s)=\int_0^t \int_{\bR^d}\Big(G_{t-s}^p(x-y)+G_{t-s}(x-y)1_{\{p\geq 1\}}\Big)dyds$.
By Lemma 15 of \cite{dalang99}, $\sum_{n\geq 1}\sup_{t\leq T}H_n(t)^{1/p}<\infty$. (In fact, this lemma shows that $H_n(t) \leq C a_n$, where $(a_n)_n$ satisfies $\sum_{n\geq 1}a_n^{1/b}<\infty$ for any $b>0$.) It follows $\{\bar{u}_{N}^{(n)}(t,x)\}_{n\geq 1}$ is a Cauchy sequence in $L^p(\Omega)$, uniformly in $[0,T] \times \bR^d$. We denote by $\bar{u}_N(t,x)$ its limit.

To show that
$\bar{u}_N$ has a predictable modification, let
$\bar{V}_N^{(n)}(s,y)=G_{t-s}(x-y)\bar{u}_N(s,y)$. If $p\in (0,1)$, then
\[
\|\bar{V}_N^{(n)}-\bar{V}_N^{(n-1)}\|_{L^p(\Omega \times (0,t) \times \bR^d)}=\int_0^t \int_{\bR^d}G_{t-s}^p(x-y) \bE|\bar{u}_N^{(n)}(s,y)-\bar{u}_N^{(n-1)}(s,y)|^pdyds\leq C a_n,
\]
and therefore
\[
\sum_{n\geq 1}\|\bar{V}_N^{(n)}-\bar{V}_N^{(n-1)}\|_{L^p(\Omega \times (0,t) \times \bR^d)} \leq C \sum_{n\geq 1}a_n<\infty.
\]
(This relation also holds if $p\in [1,2]$, using the fact that $\sum_{n\geq 1}a_n^{1/p}<\infty$.) The existence of the predictable modification follows as in the proof of Theorem \ref{exist-th-K}.
This modification is the unique solution of equation \eqref{wave} with $L_N$ replaced by $\bar{L}_N$.
\end{proof}

\begin{remark}
{\rm The natural stopping time associated with the truncation $h(x)=1$ is
\[
\bar{\tau}_N=\inf\{t>0; J([0,t] \times \bR^d \times \{|z|>N\})>0 \}
\]
Unfortunately, unlike \eqref{eq1}, we could not find an argument to show that $J$ has finitely many points in the (unbounded) set $[0,t] \times \bR^d \times \{|z|>N\}$. Hence, $\bar{\tau}_N$ may not be well-defined, as mentioned in Remark 27 of \cite{B14}. Because of this, we could not proceed as in the last part of the proof of Theorem \ref{exist-th} to obtain the existence of a solution of equation \eqref{wave-eq}, based on the solutions $\bar{u}_N$. This explains why we cannot drop the condition about $q$ in \eqref{pq-cond}: the existence of $q$ allows us to choose a suitable $\eta$ such that (30) holds, and hence $\tau_N$ is well-defined.
}
\end{remark}

\section{Path properties of the solution}

In this section, we fix $T>0$ and we study the path properties of the solution $u$ to equation \eqref{wave-eq} on the interval $[0,T]$. More precisely, we will show that the process $\{u(t,\cdot)\}_{t \in [0,T]}$ has a modification which is c\`adl\`ag (i.e. right-continuous with left limits) in a suitable space.

We say that two processes $\{X(t)\}_{t \in [0,T]}$ and $\{Y(t)\}_{t\in [0,T]}$ (defined on the same probability space) are {\em modifications} of each other if
$\bP(X(t)=Y(t))=1$ for almost all $t \in [0,T]$.

\medskip

Let $h(x)=1+|x|^{\eta}$ for some $\eta>d/q$, and
\begin{equation}
\label{def-tau2}
\tau_N=\inf \left\{t \in [0,T]; J([0,t] \times \{(x,z);|z|>N h(x)\})>0 \right\}.
\end{equation}
Compared with \eqref{def-tau}, the infimum is now taken over $[0,T]$. It follows that with probability $1$, for $N$ large enough, $\tau_N=\infty$ and $u(t,x)=u_N(t,x)$ for all $t \in [0,T]$. So, it suffices to study the path properties of $u_N$ for fixed $N \geq 1$.

\medskip

We consider the fractional Sobolev space of order $r \in \bR$:
\[
H^{r}(\bR^2)=\{f \in \cS'(\bR^2); \|f\|_{H^r(\bR^2)}^2:=\int_{\bR^2}|\cF f(\xi)|^2(1+|\xi|^2)^{r} d\xi <\infty\}
\]
and the local fractional Sobolev space of order $r$:
\[
H_{\rm loc}^r(\bR^2)=\{f \in \cS'(\bR^2); \varphi f \in H^r(\bR^2) \ \mbox{for all} \ \varphi \in C_c^{\infty}(\bR^2)\}
\]
We say that $f_n \to f$ in $H_{\rm loc}^r(\bR^2)$ if $f_n \varphi \to f \varphi$ in $H^r(\bR^2)$, for any $\varphi \in C_c^{\infty}(\bR^2)$. The embedding map of $H^r(\bR^2)$ into $H_{\rm loc}^r(\bR^2)$ is continuous.

\medskip

Before investigating the path properties of the solution
$u$, we need to examine the regularity of the function $G$.
A key estimate is the following: for any $t>0$ and $\xi \in \bR$,
\begin{equation}
\label{key}
\frac{\sin^2(t|\xi|)}{|\xi|^2} \leq 2(t^2 \vee 1)\frac{1}{1+|\xi|^2}.
\end{equation}
Using this inequality, it follows that for any $t>0$,
\begin{equation}
\label{FG-bound}
\int_{\bR}|\cF G_t(\xi)|^2 (1+|\xi|^2)^r d\xi \leq 2(t^2 \vee 1) \int_{\bR} \left(\frac{1}{1+|\xi|^2}\right)^{1-r}d\xi.
\end{equation}
Therefore, $G_t \in H^r(\bR^d)$ for any $r<1-d/2$ and $t>0$.

\medskip

By convention, we let $G_t(x)=0$ for any $t<0$ and $x \in \bR^d$. To define the function $G$ at time $0$, we consider the limit as $t \to 0+$. We treat separately the cases $d=1$ and $d=2$. We start with the case $d=2$ since it is more involved.

\subsection{Case $d=2$}

In this case, $G_t \in H^r(\bR^2)$ for any $r<0$ and $t>0$.
On the other hand, $G_0=\delta_0$ (the Dirac delta distribution at $0$) since
\[
G_0(x):=\lim_{t\to 0+}G_t(x)=
\left\{
\begin{array}{ll}
0 & \mbox{if $x \not =0$} \\
\infty & \mbox{if $x=0$}
\end{array} \right.
\]
The Dirac delta distribution $\delta_x$ is in
$H^r(\bR^2)$ if and only if $r<-1$, since $\cF \delta_x(\xi)=e^{-i \xi \cdot x}$.

\medskip

The following result will allow us to analyze the compound-Poisson component of $u^N$.

\begin{lemma}
\label{lem-G-t0}
If $d=2$, for any $t_0 \in [0,T]$, $x_0 \in \bR^2$, the map $[0,T] \ni t\mapsto G_{t-t_0}(\, \cdot-x_0)$ is c\`adl\`ag in $H^r(\bR^2)$ for any $r<-1$.
\end{lemma}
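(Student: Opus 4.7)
My plan is to reduce to the case $x_0 = 0$, $t_0 = 0$ via translation, and then show the resulting path $s \mapsto G_s$ is in fact \emph{continuous} in $H^r(\bR^2)$ on $[0,T]$ (a fortiori càdlàg) by a dominated convergence argument applied to the Fourier-integral representation of $\|G_s - G_{s'}\|_{H^r}^2$. Spatial translation by $x_0$ multiplies $\cF f$ by the unimodular factor $e^{-i\xi \cdot x_0}$ and therefore preserves the $H^r$ norm, so the path $t \mapsto G_{t-t_0}(\cdot - x_0)$ has the same regularity properties as $t \mapsto G_{t-t_0}$. Setting $s = t - t_0$, the path is constantly $0$ on $[-t_0, 0]$ (since $G_s = 0$ for $s < 0$, with the value at $s=0$ understood as the $H^r$-limit $\lim_{s\downarrow 0} G_s$, which we will show equals $0$), so the entire question boils down to continuity of $s \mapsto G_s$ on $[0, T-t_0]$ in $H^r(\bR^2)$ for $r<-1$.

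By Plancherel's theorem and the identity \eqref{Fourier-G}, for any $s, s' \in [0,T]$,
\[
\|G_s - G_{s'}\|_{H^r}^2 = \int_{\bR^2} \left|\frac{\sin(s|\xi|) - \sin(s'|\xi|)}{|\xi|}\right|^2 (1+|\xi|^2)^r\, d\xi,
\]
and the integrand tends to $0$ pointwise as $s' \to s$ (with $s'=0$ and $\sin(0)=0$ covering the right-continuity at $s=0$). To apply dominated convergence, I combine the elementary bound $(a-b)^2 \le 2(a^2+b^2)$ with the paper's key estimate \eqref{key}, which gives $\sin^2(u|\xi|)/|\xi|^2 \le 2(u^2 \vee 1)/(1+|\xi|^2)$ for any $u > 0$. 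This yields, uniformly for $s, s' \in [0,T]$, the integrable majorant $8(T^2 \vee 1)(1+|\xi|^2)^{r-1}$, which is integrable on $\bR^2$ precisely when $r - 1 < -1$, i.e., $r < 0$. Since we are in the range $r < -1$, this is satisfied, and DCT gives $\|G_s - G_{s'}\|_{H^r} \to 0$, simultaneously establishing continuity on $(0, T-t_0]$ and right-continuity at $s=0$.

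No substantial obstacle is expected: once the Fourier representation is written down and \eqref{key} is invoked as the source of the dominating function, the entire argument is a straightforward DCT application. The only subtle interpretive point is the value of the map at the single time $t=t_0$: for the path to be càdlàg it must be taken as the $H^r$-limit $0$ rather than the formal ``$\delta_{x_0}$'' of the pointwise limit discussed in the introduction; this is the canonical càdlàg modification at that point, consistent with the modification language used throughout Section 3.
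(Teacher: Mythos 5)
Your argument is correct, and technically it runs on the same engine as the paper's own proof: the Fourier representation \eqref{Fourier-G} together with dominated convergence, with the majorant supplied by \eqref{key}. The genuine difference is at the single time $t=t_0$, and there your version is the right one. The paper's proof sets $F(t_0)=\delta_{x_0}$ and claims right-continuity at $t_0$ via
\[
\int_{\bR^2}\Big|e^{-i\xi\cdot x_0}\tfrac{\sin(h|\xi|)}{|\xi|}-e^{-i\xi\cdot x_0}\Big|^2(1+|\xi|^2)^r\,d\xi \to 0 \quad (h\to 0+),
\]
but for each fixed $\xi$ one has $\sin(h|\xi|)/|\xi|\to 0$ as $h\to 0+$, so the integrand converges pointwise to $(1+|\xi|^2)^r$ and the integral converges to $\|\delta_{x_0}\|_{H^r(\bR^2)}^2>0$, not to $0$. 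The correct right-limit is the one you identify: since $\sin^2(h|\xi|)\le h^2|\xi|^2$, one has $\|G_h(\cdot-x_0)\|_{H^r(\bR^2)}^2\le h^2\int_{\bR^2}(1+|\xi|^2)^r d\xi\to 0$, i.e. $G_h(\cdot-x_0)\to 0$ in $H^r(\bR^2)$ (consistent with $\int_{\bR^2}G_h=h\to 0$; it is $\partial_t G_t$, not $G_t$, whose limit is $\delta_0$). Consequently, with the paper's convention $G_0=\delta_0$ the map is in fact not right-continuous at $t_0$, whereas with your convention $G_0=0$ it is continuous on all of $[0,T]$, and your domination argument gives this for every $r<0$, not only $r<-1$. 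The conclusion of the lemma and its downstream use for $u_N^2$ are unaffected by this correction (only a modification is required there, and under the correct limit each summand of $u_N^2$ enters continuously in $H^r$), so the ``interpretive point'' you flag is not cosmetic: it repairs a genuine flaw in the paper's proof rather than introducing a gap in yours.
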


\begin{proof}
We have to prove that the function $F:[0,T] \to H^r(\bR^2)$ given by
\[
F(t)=
\left\{
\begin{array}{ll}
G_{t-t_0}(\, \cdot -x_0) & \mbox{if $t >t_0$} \\
\delta_{x_0} & \mbox{if $t=t_0$}\\
0 & \mbox{if $t<t_0$}
\end{array} \right.
\]
is c\`adl\`ag. Note that $F$ is continuous at any point $t>t_0$, since
\begin{align*}
\|F(t+h)-F(t)\|_{H^r(\bR^2)}^2 &= \int_{\bR^2}\left|\frac{\sin((t+h)|\xi|)}{|\xi|}-
\frac{\sin(t|\xi|)}{|\xi|} \right|^2 (1+|\xi|^2)^{r}d\xi\\
&=4 \int_{\bR^2} \frac{(1+|\xi|^2)^{r}}{|\xi|^2}\sin^2\left(\frac{h|\xi|}{2}\right) \cos^2 \frac{(2t+h)|\xi|}{2}d\xi \to 0
\end{align*}
as $h \to 0$, by the dominated convergence theorem. To justify the application of this theorem we use the fact that $\sin^2(\frac{h|\xi|}{2}) \leq C \frac{|\xi|^2}{1+|\xi|^2}$ and $\int_{\bR^2}(1+|\xi|^2)^{r-1}d\xi<\infty$.

 Clearly, $F$ is continuous at any point $t<t_0$. $F$ is right-continuous at $t_0$ since
\begin{align*}
\|F(t_0+h)-F(t_0)\|_{H_r^2(\bR^2)}&=\int_{\bR^2} |\cF G_h(\cdot-x_0)(\xi)-\cF \delta_{x_0}(\xi)|^2 (1+|\xi|^2)^r d\xi\\
&=\int_{\bR^2}\left|e^{-i \xi \cdot x_0} \frac{\sin(h|\xi|)}{|\xi|}-e^{-i\xi \cdot x_0} \right|^2 (1+|\xi|^2)^r d\xi \to 0
\end{align*}
as $h \to 0+$, by the dominated convergence theorem. Clearly, $F$ has left limit $0$ at $t_0$.
\end{proof}

We consider first the case of bounded function $\sigma$. The compact support property of $G$ turns out to be very useful, and compensates for its lack of smoothness.

\begin{theorem}
\label{th-sigma-bded}
Suppose that $d=2$ and Assumption A holds with $p<2$. Assume that $\sigma$ is bounded. Let $\{u(t,x);t\in [0,T],x \in \bR^2\}$ be the solution to equation \eqref{wave-eq} on the interval $[0,T]$, constructed in Theorem \ref{exist-th} but with stopping times $(\tau_N)_{N\geq 1}$ defined by \eqref{def-tau2}. Then the process $\{u(t,\cdot)\}_{t \in [0,T]}$ has a c\`adl\`ag modification with values in $H_{\rm loc}^r(\bR^2)$, for any $r<-1$.
\end{theorem}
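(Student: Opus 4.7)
By \eqref{def-tau2}, with probability one $\tau_N=\infty$ and $u=u_N$ on $[0,T]$ for all sufficiently large $N$, so it is enough to show that $\{u_N(t,\cdot)\}_{t\in[0,T]}$ has a c\`adl\`ag modification in $H_{\rm loc}^r(\bR^2)$ for each fixed $N\geq 1$. Fix $\varphi \in C_c^\infty(\bR^2)$ with support in a ball $\{|x|\leq R\}$; by definition of $H_{\rm loc}^r$ convergence, it suffices to prove that $t\mapsto \varphi\cdot u_N(t,\cdot)$ has a c\`adl\`ag modification in $H^r(\bR^2)$. Using the decomposition \eqref{decomp-LN} (or \eqref{no-drift-LN} when $p<1$) of $L_N$, I would write $u_N=w+I_b+I_M+I_P$, where $I_b$ is the drift term, $I_M$ the compensated small-jump stochastic convolution, and $I_P$ the large-jump (compound-Poisson) contribution, and handle each summand separately.

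The easy terms: $w$ is jointly continuous and bounded, so $t\mapsto w(t,\cdot)$ is continuous in $L^\infty$, hence in $H^r_{\rm loc}$; and, using boundedness of $\sigma$ together with $\int G_s(y)dy=s$, one gets $|I_b(t,x)|\leq |b|\|\sigma\|_\infty t^2/2$ and pointwise continuity in $t$ by dominated convergence, whence $t\mapsto I_b(t,\cdot)$ is continuous in $L^\infty_{\rm loc}$ and thus in $H^r_{\rm loc}$. For the compound-Poisson term, the compact spatial support of $G$ and $\nu(\{|z|>1\})<\infty$ guarantee that almost surely only finitely many atoms $(T_i,X_i,Z_i)$ of $J$ in $[0,T]\times \{|x|\leq R+T\}\times \{1<|z|\leq Nh(x)\}$ contribute to $\varphi\cdot I_P(t,\cdot)$, so the latter equals the finite sum $\sum_i \sigma(u_N(T_i-,X_i))Z_i\,\varphi\, G_{t-T_i}(\cdot-X_i)$, and each summand is c\`adl\`ag in $H^r(\bR^2)$ by Lemma \ref{lem-G-t0}.

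The substantive step is the compensated small-jump integral $I_M$. I would approximate by truncating jump sizes: for $\epsilon>0$, let $I_M^{(\epsilon)}$ be the same integral restricted to $\{\epsilon<|z|\leq 1\}$. Then $I_M^{(\epsilon)}$ splits into a finite jump sum (handled as for $I_P$) minus a continuous-in-$t$ compensator (handled as for $I_b$), so $\varphi\cdot I_M^{(\epsilon)}$ is c\`adl\`ag in $H^r$. Applying the It\^o isometry for $H^r$-valued compensated Poisson integrals, together with translation invariance of the $H^r$-norm and the vanishing of $\varphi\,G_{t-s}(\cdot-y)$ outside $\{|y|\leq R+T\}$, one obtains
\[
\bE\|\varphi(I_M-I_M^{(\epsilon)})(t,\cdot)\|_{H^r(\bR^2)}^2 \leq C_\varphi\|\sigma\|_\infty^2 \Big(\int_{|z|\leq\epsilon} z^2\nu(dz)\Big)\int_0^t\int_{|y|\leq R+T}\|G_{t-s}\|_{H^r(\bR^2)}^2\,dyds,
\]
and the $\bR^2$-analogue of \eqref{FG-bound} (which only uses \eqref{key}) shows $\|G_s\|_{H^r(\bR^2)}^2$ is uniformly bounded on $[0,T]$ when $r<-1$. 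Hence the right-hand side tends to zero as $\epsilon\to 0$, giving pointwise-in-$t$ convergence of $\varphi\cdot I_M^{(\epsilon)}(t,\cdot)$ to $\varphi\cdot I_M(t,\cdot)$ in $L^2(\Omega;H^r)$.

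The main obstacle is upgrading this to uniform-in-$t$ convergence, which is what is needed to conclude that $\varphi\cdot I_M$ is c\`adl\`ag, as an almost-sure uniform limit of c\`adl\`ag processes. Because $(G_t)_{t\geq 0}$ has no semigroup property, the stochastic convolution is not an $H^r$-valued martingale in $t$ and Doob's inequality does not apply directly, which is the key difference from the heat equation case of \cite{CDH19}. My strategy is to work in Fourier space: for each $\xi \in \bR^2$, a stochastic Fubini applied to $\cF[\varphi(I_M-I_M^{(\epsilon)})(t,\cdot)](\xi)$ isolates the $t$-dependence of the wave kernel via $\cF G_{t-s}(\xi)=\sin((t-s)|\xi|)/|\xi|$ and, after rewriting $\sin$-differences by product-to-sum identities, reduces a maximal bound over $t$ to Doob's inequality for scalar martingales parametrized by $\xi$; integrating the resulting bound in $\xi$ against the weight $(1+|\xi|^2)^r$ and exploiting \eqref{key} should yield
\[
\bE\sup_{t\in[0,T]}\|\varphi(I_M-I_M^{(\epsilon)})(t,\cdot)\|_{H^r(\bR^2)}^2 \longrightarrow 0,
\]
from which extraction of an a.s.~uniformly convergent subsequence produces the desired c\`adl\`ag modification of $\varphi\cdot I_M$.
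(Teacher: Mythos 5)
Your plan is correct in outline, and for the decomposition it follows the paper exactly: reduction to $u_N$ via \eqref{def-tau2}, splitting off $w$, the drift term, the compound--Poisson term (finite sum, c\`adl\`ag by Lemma \ref{lem-G-t0}), and the compensated small-jump convolution as the only substantive term. Where you genuinely diverge is in how you handle that last term. The paper computes its Fourier transform $a_\xi(t)$ by stochastic Fubini, checks stochastic continuity, and then verifies the two-sided fourth-moment increment bound \eqref{mom-incr} (with $\delta=3$) using the Rosenthal-type maximal inequality of Theorem \ref{max-ineq} together with Cauchy--Schwarz, concluding via the Gihman--Skorohod criterion (Theorems 1 and 5 of \cite{GS74}); no approximation of the noise is needed, but one pays with fourth-moment computations of the sine increments. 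You instead truncate the small jumps at level $\e$, observe that each truncated process is c\`adl\`ag (finite jump sum plus a continuous compensator), and upgrade the elementary $L^2$ bound to a uniform-in-$t$ one so as to pass to the limit; this only uses second moments of the small jumps (always finite by \eqref{cond-Levy}) and produces the c\`adl\`ag modification as an a.s.\ uniform limit along a subsequence. Your maximal estimate does work, but the mechanism deserves to be spelled out: writing $\sin((t-s)|\xi|)=\sin(t|\xi|)\cos(s|\xi|)-\cos(t|\xi|)\sin(s|\xi|)$ gives $a^{(\e)}_\xi(t)=\frac{\sin(t|\xi|)}{|\xi|}M_{1,\xi}(t)-\frac{\cos(t|\xi|)}{|\xi|}M_{2,\xi}(t)$ with $M_{1,\xi},M_{2,\xi}$ true martingales, and while the first prefactor is controlled by \eqref{key}, the factor $\cos(t|\xi|)/|\xi|$ blows up as $\xi\to 0$; you must compensate with the smallness of the integrand $\sin(s|\xi|)$ inside $M_{2,\xi}$ (Doob plus $\int_0^T\sin^2(s|\xi|)ds\leq C(T)|\xi|^2(1+|\xi|^2)^{-1}\cdot(1+|\xi|^2)$-type bounds) before integrating against $(1+|\xi|^2)^r$. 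This device is in fact close in spirit to what the paper does in the unbounded-$\sigma$ theorem, where $\frac{\sin((t-s)|\xi|)}{|\xi|}=\int_s^t\cos((t-r)|\xi|)dr$ is used to reduce a supremum over $t$ to a maximal bound for a genuine martingale. Two further points need a word in a full write-up: the identification of the $H^r$-valued integral with the pointwise random field (your ``It\^o isometry'' display) rests on the same stochastic Fubini verifications the paper carries out via Theorem A.3.1 of \cite{CDH19}; and in the case $p<1$ the paper works with the uncompensated decomposition \eqref{no-drift-LN} and Lemma \ref{mom-N}, so your re-splitting of $L^Q$ into compensated part plus drift should be justified there, which is legitimate since Assumption A with $p<1$ gives $\int_{\{|z|\leq 1\}}|z|\nu(dz)<\infty$ and $\sigma$ is bounded.
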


\begin{proof}
We follow the lines of the proof of Proposition 2.14 of \cite{CDH19} for the heat equation. As mentioned above, it suffices to prove the result for $u_N$, for arbitrary $N\geq 1$. We consider separately the cases $p\geq 1$ and $p<1$, $p$ being the constant from Assumption A.

\medskip

{\em Case 1. $p\geq 1$.} Recall that $u_N$ satisfies the integral equation \eqref{eq-uN}. Using decomposition \eqref{decomp-LN}
of $L_N$, we write:
\begin{align}
\nonumber
u_N(t,x)&=w(t,x)+\int_0^t \int_{\bR^2}G_{t-s}(x-y)\sigma(u_N(s,y)) L^M(ds,dy)+ \\
\nonumber
& \quad \int_0^t \int_{\bR^2}G_{t-s}(x-y)\sigma(u_N(s,y)) L_N^P(ds,dy)+b\int_0^t \int_{\bR^2}G_{t-s}(x-y)\sigma(u_N(s,y)) dyds \\
\label{decomp-uN}
& =: w(t,x)+u_N^1(t,x)+u_N^2(t,x)+u_N^3(t,x).
\end{align}

\medskip

\underline{We treat $w(t,x)$.}
Since $w$ is jointly continuous and bounded, the map $t \mapsto w(t,\cdot)$ is continuous in $H_{\rm loc}^r(\bR^2)$ (see the proof of Lemma 2.13 of \cite{CDH19}).

\medskip

\underline{We treat $u_N^1(t,x)$.} Let $A\geq T$ be arbitrary and $K=\{y\in \bR^2;|y| \leq 2A\}$. We use the decomposition $u_N^1(t,x)=u_N^{1,1}(t,x)+u_N^{1,2}(t,x)$, where
\begin{align}
\label{def-uN-11}
u_N^{1,1}(t,x)&=\int_0^t \int_{K} G_{t-s}(x-y)\sigma(u_N(s,y))L^M(ds,dy)\\
\label{def-uN-12}
u_N^{1,2}(t,x)&=\int_0^t \int_{K^c} G_{t-s}(x-y)\sigma(u_N(s,y))L^M(ds,dy).
\end{align}
Suppose that $|x|\leq A$.
Note that $G_{t-s}(x-y)$ contains the indicator of the set $\{y;|x-y|<t-s\}$ and any element in this set satisfies:
\[
|y|\leq |y-x|+|x|\leq t-s+A\leq T+A\leq 2A.
\]
This shows that $u_{N}^{1,2}(t,\cdot)1_{\{|x|\leq A\}}=0$ for any $A\geq T$.

It remains to examine $u_N^{1,1}(t,x)$. Similarly to (2.33) of \cite{CDH19}, for any $\varphi \in \cS(\bR^2)$, we have
\[
\langle \cF u^{1,1}(t,\cdot),\varphi \rangle=\int_{\bR^2}\left( \int_0^t \int_{K} e^{-i \xi \cdot y} \frac{\sin((t-s)|\xi|)}{|\xi|}
\sigma(u_N(s,y))L^M(ds,dy) \right)\varphi(\xi)d\xi.
\]
The proof of this relation is based on applying twice the stochastic Fubini theorem given by Theorem A.3.1 of \cite{CDH19}. For the first  application of this theorem, we need to check that
\[
\int_{\bR^2} |\cF \varphi(x)| \left( \int_0^t \int_{K} \int_{\{|z|\leq 1\}} G_{t-s}^p(x-y) \bE|\sigma(u_N(s,y))|^p |z|^p\nu(dz) dyds\right)^{1/p}dx<\infty.
\]
This follows using Assumption A, \eqref{p-mom-finite}, and the fact that $\int_{\bR^2}G_{t-s}^p(x-y)dy=c_p (t-s)^{2-p}$. For the second application, we need to check that
\[
\int_{\bR^2} |\varphi(\xi)| \left( \int_0^t \int_{K} \int_{\{|z|\leq 1\}} \left|e^{-i \xi \cdot y} \frac{\sin((t-s)|\xi|)}{|\xi|}\right|^p  \bE|\sigma(u_N(s,y))|^p |z|^p\nu(dz) dyds\right)^{1/p}d\xi<\infty,
\]
which follows using Assumption A, \eqref{p-mom-finite}, and the fact that $\int_0^t \left|\frac{\sin((t-s)|\xi|)}{|\xi|}\right|^pds\leq \int_0^t (t-s)^p ds$.

It follows that the Fourier transform of $u_{N}^{1,1}(t,\cdot)$ is the following function:
\begin{equation}
\label{Fourier-uN-11}
a_{\xi}(t):=\cF u_{N}^{1,1}(t,\cdot)(\xi)=\int_0^t \int_{K} e^{-i \xi \cdot y} \frac{\sin((t-s)|\xi|)}{|\xi|}
\sigma(u_N(s,y))L^M(ds,dy).
\end{equation}
Note that for any $t \in [0,T]$, $u_N^{1,1}(t,\cdot) \in H^r(\bR^2)$ with probability $1$, since
\begin{align*}
&\bE\left[\int_{\bR^2}|a_{\xi}(t)|^2(1+|\xi|^2)^{r}d\xi\right] \\
&=\int_{\bR^2} \bE \left|\int_0^t \int_{K} \int_{|z|\leq 1}e^{-i \xi \cdot y} \frac{\sin((t-s)|\xi|)}{|\xi|}
\sigma(u_N(s,y))z\widetilde{J}(ds,dy,dz) \right|^2 (1+|\xi|^2)^{r}d\xi\\
&= \int_{\bR^2} \left(\int_0^t \int_{K} \int_{|z|\leq 1}\frac{\sin^2((t-s)|\xi|)}{|\xi|^2}
\bE|\sigma(u_N(s,y))|^2 |z|^2 \nu(dz) dyds\right)
(1+|\xi|^2)^{r}d\xi\\
& \leq C \int_{\bR^2} \left(\int_0^t \frac{\sin^2((t-s)|\xi|)}{|\xi|^2} ds\right)(1+|\xi|^2)^{r}d\xi<\infty,
\end{align*}
where for the last inequality above, we used the fact that $\sigma$ is bounded (since relation \eqref{p-mom-finite} may not hold for $p=2$). The last integral is finite since $r<0$ (using \eqref{key}).

We prove that $\{u_N^{1,1}(t,\cdot)\}_{t\in [0,T]}$ is stochastically continuous as $H^r(\bR^2)$-valued process, i.e.
\begin{equation}
\label{stoc-cont}
\bE\Big[\|u_N^{1,1}(t+h,\cdot)-u_N^{1,1}(t,\cdot)\|_{H^r(\bR^2)}^2\Big]=0 \quad \mbox{as} \quad h\to 0.
\end{equation}
To see this, note that
\begin{align}
\nonumber
& |a_{\xi}(t+h)-a_{\xi}(t)|^2 \leq \\
\nonumber
& 2 \left\{ \left|
\int_0^t \int_{K} e^{-i \xi \cdot y} \frac{\sin((t+h-s)|\xi|)-\sin((t-s)|\xi|)}{|\xi|} \sigma(u_N(s,y)) L^M(ds,dy) \right|^2 + \right. \\
\label{a-incr-1}
& \quad \left. \left|
\int_{t}^{t+h} \int_{K} e^{-i \xi \cdot y} \frac{\sin((t+h-s)|\xi|)}{|\xi|} \sigma(u_N(s,y)) L^M(ds,dy) \right|^2 \right\}.
\end{align}
Hence, using again the fact that $\sigma$ is bounded, we have
\begin{align*}
&\bE\Big[|a_{\xi}(t+h)-a_{\xi}(t)|^2\Big] \leq \\
& 2 \left\{
\int_0^t \int_{K} \frac{\sin^2((t+h-s)|\xi|)-\sin((t-s)|\xi|)}{|\xi|^2} \bE|\sigma(u_N(s,y))|^2 |z|^2 \nu(dz)dy ds + \right. \\
& \quad \left.
\int_{t}^{t+h} \int_{K} \frac{\sin^2((t+h-s)|\xi|)}{|\xi|^2} \bE|\sigma(u_N(s,y))|^2 |z|^2 \nu(dz) dyds  \right\}\\
&\leq C \left\{ \int_0^t \frac{\sin^2((t+h-s)|\xi|)-\sin((t-s)|\xi|)}{|\xi|^2}ds+\int_{t}^{t+h} \frac{\sin^2((t+h-s)|\xi|)}{|\xi|^2} ds\right\}\\
&=C \left\{\frac{4}{|\xi|^2} \sin^2 \left(\frac{h|\xi|}{2}\right) \int_0^t \cos^2 \frac{2(t-s)+h}{2}ds+\int_0^h \frac{\sin^2(s|\xi|)}{|\xi|^2}ds\right\}=C|h|^2,
\end{align*}
and therefore, by the dominated convergence theorem,
\[
\bE\Big[\|u_N^{1,1}(t+h,\cdot)-u_N^{1,1}(t,\cdot)\|_{H^r(\bR^2)}^2\Big]=
\int_{\bR^2}\bE\Big[|a_{\xi}(t+h)-a_{\xi}(t)|^2\Big] (1+|\xi|^2)^r d\xi
\to 0 \quad \mbox{as} \quad h \to 0.
\]
To justify the application of this theorem, we use inequality \eqref{key} and the fact that $r<0$.

To conclude that $\{u_N^{1,1}(t,\cdot)\}_{t \in [0,T]}$ has a c\`adl\`ag modification with values in $H^r(\bR^2)$, we apply Theorems 1 and 5 of \cite{GS74}. For this, we need to show that there exists $\delta>0$ such that for any $t \in [0,T]$ and for any $h>0$ such that $t+h,t-h \in [0,T]$,
\begin{equation}
\label{mom-incr}
\bE \Big[ \|u_N^{1,1}(t+h,\cdot)-u_N^{1,1}(t,\cdot)\|_{H^r(\bR^2)}^2
\|u_N^{1,1}(t-h,\cdot)-u_N^{1,1}(t,\cdot)\|_{H^r(\bR^2)}^2\Big] \leq C h^{1+\delta}.
\end{equation}
To prove this, note that
\begin{align}
\nonumber
\bE \Big[ & \|u_N^{1,1}(t+h,\cdot)-u_N^{1,1}(t,\cdot)\|_{H^r(\bR^2)}^2
\|u_N^{1,1}(t-h,\cdot)-u_N^{1,1}(t,\cdot)\|_{H^r(\bR^2)}^2\Big] =\\
\label{bound-E}
& \quad \int_{(\bR^2)^2}\bE\Big[|a_{\xi}(t+h)-a_{\xi}(t)|^2 |a_{\eta}(t-h)-a_{\eta}(t)|^2\Big](1+|\xi|^2)^r (1+|\eta|^2)^r d\xi d\eta.
\end{align}
We combine \eqref{a-incr-1} with the similar bound for the increment over $[t-h,t]$, namely:
\begin{align*}
& |a_{\eta}(t-h)-a_{\eta}(t)|^2 \leq \\
& 2 \left\{ \left|
\int_0^t \int_{K} e^{-i \eta \cdot y} \frac{\sin((t-h-s)|\eta|)-\sin((t-s)|\eta|)}{|\eta|} \sigma(u_N(s,y)) L^M(ds,dy) \right|^2 + \right. \\
& \quad \left. \left|
\int_{t-h}^{t} \int_{K} e^{-i \eta \cdot y} \frac{\sin((t-h-s)|\eta|)}{|\eta|} \sigma(u_N(s,y)) L^M(ds,dy) \right|^2 \right\}.
\end{align*}
It follows that
\begin{equation}
\label{bound-E2}
E_{t,h}(\xi,\eta):=\bE\Big[|a_{\xi}(t+h)-a_{\xi}(t)|^2 |a_{\eta}(t-h)-a_{\eta}(t)|^2\Big] \leq \sum_{i=1}^4 E_{t,h}^i(\xi,\eta),
\end{equation}
where
\begin{align*}
E_{t,h}^1(\xi,\eta)&=\bE\left[\left|
\int_0^t \int_{K} e^{-i \xi \cdot y} \frac{\sin((t+h-s)|\xi|)-\sin((t-s)|\xi|)}{|\xi|} \sigma(u_N(s,y)) L^M(ds,dy) \right|^2 \right. \\
& \quad \quad \left. \left|
\int_0^t \int_{K} e^{-i \eta \cdot y} \frac{\sin((t-h-s)|\eta|)-\sin((t-s)|\eta|)}{|\eta|} \sigma(u_N(s,y)) L^M(ds,dy) \right|^2 \right]\\
E_{t,h}^2(\xi,\eta)&=\bE\left[\left|
\int_0^t \int_{K} e^{-i \xi \cdot y} \frac{\sin((t+h-s)|\xi|)-\sin((t-s)|\xi|)}{|\xi|} \sigma(u_N(s,y)) L^M(ds,dy) \right|^2 \right. \\
& \quad \quad \left. \left|
\int_{t-h}^{t} \int_{K} e^{-i \eta \cdot y} \frac{\sin((t-h-s)|\eta|)}{|\eta|} \sigma(u_N(s,y)) L^M(ds,dy) \right|^2 \right]\\
E_{t,h}^3(\xi,\eta)&=\bE \left[ \left|
\int_{t}^{t+h} \int_{K} e^{-i \xi \cdot y} \frac{\sin((t+h-s)|\xi|)}{|\xi|} \sigma(u_N(s,y)) L^M(ds,dy) \right|^2 \right. \\
& \quad \quad \left. \left|
\int_0^t \int_{K} e^{-i \eta \cdot y} \frac{\sin((t-h-s)|\eta|)-\sin((t-s)|\eta|)}{|\eta|} \sigma(u_N(s,y)) L^M(ds,dy) \right|^2 \right]
\end{align*}
\begin{align*}
E_{t,h}^4(\xi,\eta)&=\bE \left[ \left|
\int_{t}^{t+h} \int_{K} e^{-i \xi \cdot y} \frac{\sin((t+h-s)|\xi|)}{|\xi|} \sigma(u_N(s,y)) L^M(ds,dy) \right|^2 \right. \\
& \quad \quad \left. \left|
\int_{t-h}^{t} \int_{K} e^{-i \eta \cdot y} \frac{\sin((t-h-s)|\eta|)}{|\eta|} \sigma(u_N(s,y)) L^M(ds,dy) \right|^2 \right].
\end{align*}
To estimate these terms, we use Cauchy-Schwarz inequality. Using Theorem \ref{max-ineq} and the fact that $\sigma$ is bounded,
\begin{align*}
& \bE\left[\left|
\int_0^t \int_{K} e^{-i \xi \cdot y} \frac{\sin((t+h-s)|\xi|)-\sin((t-s)|\xi|)}{|\xi|} \sigma(u_N(s,y)) L^M(ds,dy) \right|^4 \right]\leq \\
& \quad \quad C \left\{ \left(\int_0^t \frac{|\sin((t+h-s)|\xi|)-\sin((t-s)|\xi|)|^2}{|\xi|^2} ds\right)^2+ \right.\\
& \quad \quad \quad \quad \quad \left. \int_0^t \frac{|\sin((t+h-s)|\xi|)-\sin((t-s)|\xi|)|^4}{|\xi|^4} ds \right\}\leq C h^4
\end{align*}
and
\begin{align*}
& \bE\left[\left|
\int_t^{t+h} \int_{K} e^{-i \xi \cdot y} \frac{\sin((t+h-s)|\xi|))}{|\xi|} \sigma(u_N(s,y)) L^M(ds,dy) \right|^4 \right]\leq \\
& \quad \quad C \left\{ \left(\int_t^{t+h} \frac{\sin^2((t+h-s)|\xi|)}{|\xi|^2} ds\right)^2+\int_t^{t+h} \frac{\sin^4((t+h-s)|\xi|)}{|\xi|^4} ds \right\}\leq C h^5.
\end{align*}
Similar bounds are valid for the other two terms which involve $t-h$. This proves that \eqref{mom-incr} holds with $\delta=3$. So, $\{u_{N}^{1,1}(t,\cdot)\}_{t \in [0,T]}$ has a c\`adl\`ag modification with values in $H^r(\bR^2)$.

\medskip

\underline{We treat $u_N^2(t,x)$.} If $J$ has points $(T_i,X_i,Z_i)$ in $[0,T] \times \bR^2 \times \bR$, then
\[
u_N^2(t,x)=\sum_{i\geq 1}G_{t-T_i}(x-X_i) \sigma(u_N(T_i,X_i))Z_i 1_{\{T_i \leq t\}} 1_{\{1<|Z_i| \leq N h(X_i)\}}.
\]
Let $\varphi \in C_c^{\infty}(\bR^2)$ be arbitrary. Pick $A>0$ such that ${\rm supp} \ \varphi \subset \{x\in \bR^2; |x| \leq A\}$,
Since $G_{t-T_i}(x-X_i)$ contains the indicator of the set $|x-X_i|<t-T_i$, we have $|X_i|<|x|+t \leq A+T$, for any $|x|\leq A$. Hence, for any $x \in \bR^2$,
\[
u_N^2(t,x)\varphi(x)=\sum_{i\geq 1}G_{t-T_i}(x-X_i) \varphi(x) \sigma(u_N(T_i,X_i))Z_i 1_{\{T_i \leq t\}} 1_{\{|X_i|<A+T,1<|Z_i| \leq N h(X_i)\}}.
\]
This sum contains finitely many terms,
since $S=\{(x,z);|x| <A+T, 1<|z|<Nh(x)\}$ is a bounded set in $\bR^2 \times \bR$. We look at one of these terms: for any $i\geq 1$ fixed,
the function $[0,T] \ni t \mapsto G_{t-T_i}(\cdot-X_i) \varphi$ is c\`adl\`ag in $H^r(\bR^2)$, by Lemma \ref{lem-G-t0}.

We claim that $[0,T]\ni t \mapsto u_N^2(t,\cdot)$ is c\`adl\`ag in $H_{\rm loc}^r(\bR^2)$. Right-continuity is clear, since for any $t \in [0,T]$ and for any $\varphi \in C_c^{\infty}(\bR^2)$,
\[
\lim_{h \to 0+}u_N^2(t+h,\cdot) \varphi=u_N^2(t,\cdot) \varphi \quad \mbox{in} \quad H^r(\bR^2).
\]
For the existence of left limit, we have to prove that for any $t\in [0,T]$, there exists $\phi(t) \in H_{\rm loc}^r(\bR^2)$ such that
\[
\lim_{h \to 0+}u_N^2(t-h,\cdot) \varphi=\phi(t) \varphi \quad \mbox{in} \quad H^r(\bR^2).
\]
To prove this, let $\varphi \in C_c^{\infty}(\bR^2)$ be arbitrary. Pick $A$ such that ${\rm supp} \ \varphi \subset \{x; |x| \leq A\}$. By Lemma \ref{lem-G-t0}, for any $i\geq 1$, there exists $\phi_i(t)\in H^r(\bR^2)$ such that
$\lim_{h \to 0+}G_{t-h-T_i}(\cdot -X_i) =\phi_i(t)$ in $H^r(\bR^2)$, and hence in $H_{\rm loc}^r(\bR^2)$ (since the embedding of $H^r(\bR^2)$ into $H_{\rm loc}^r(\bR^2)$ is continuous). It follows that $\lim_{h \to 0+}G_{t-h-T_i}(\cdot -X_i) \varphi =\phi_i(t)\varphi$ in $H^r(\bR^2)$ for any $i\geq 1$, and
\begin{align*}
u_N^2(t-h,\cdot) \varphi&=\sum_{i\geq 1}G_{t-h-T_i}(\cdot-X_i) \varphi \sigma(u_N(T_i,X_i))Z_i 1_{\{T_i \leq t\}} 1_{\{|X_i|<A+T,1<|Z_i| \leq N h(X_i)\}}
\end{align*}
converges in $H^r(\bR^2)$ as $h \to 0+$ to
\[
\sum_{i\geq 1}\phi_i(t) \varphi \sigma(u_N(T_i,X_i))Z_i 1_{\{T_i \leq t\}} 1_{\{|X_i|<A+T,1<|Z_i| \leq N h(X_i)\}}=:\phi(t) \varphi.
\]

\medskip

\underline{We treat $u_N^3(t,x)$.}
By Lemma 2.13 of \cite{CDH19},
the function $t \mapsto u^3(t,\cdot)$ is continuous in $H_{\rm loc}^r(\bR^2)$, since $(t,x) \mapsto G_t(x)$ is integrable on $[0,T] \times \bR^2$ and $\sigma$ is bounded.

\medskip

{\em Case 2. $p\in (0,1)$.} Using the non-drift decomposition \eqref{no-drift-LN} of $L_N$, we write
\begin{align}
\nonumber
u_N(t,x)&=w(t,x)+\int_0^t \int_{\bR^2}G_{t-s}(x-y) \sigma(u_N(s,y))L^Q(ds,dy)+\\
\nonumber
& \quad \quad \quad \quad \quad \int_0^t \int_{\bR^2}G_{t-s}(x-y) \sigma(u_N(s,y))L_N^P(ds,dy)\\
\label{decomp-uN-1}
&=:w(t,x)+u_N^1(t,x)+u_N^2(t,x).
\end{align}
The term $u_N^2$ is the same as in Case 1.

To treat $u_N^1$, we use the same argument as in Case 1, with $L^M$ replaced by $L^Q$. We mention only the changes, and we omit the details. Let $A\geq T$ be arbitrary. Using the compact set $K=\{y\in \bR^2;|y|\leq 2A\}$, we write $u_N^1(t,x)=u_N^{1,1}(t,x)+u_N^{1,2}(t,x)$ with
\begin{align}
\label{def-uN-11-1}
u_N^{1,1}(t,x)&=\int_0^t \int_{K} G_{t-s}(x-y)\sigma(u_N(s,y))L^Q(ds,dy)\\
\label{def-uN-12-1}
u_N^{1,2}(t,x)&=\int_0^t \int_{K^c} G_{t-s}(x-y)\sigma(u_N(s,y))L^Q(ds,dy),
\end{align}
The same argument as above shows that $u_N^{1,2}(t,\cdot)1_{\{|x|\leq A\}}=0$.

It remains to study $u_N^{1,1}$. In this case, the Fourier transform of $u_N^{1,1}(t,\cdot)$ is given by
\begin{equation}
\label{Fourier-uN-11-1}
a_{\xi}(t):=\cF u_N^{1,1}(t,\cdot)(\xi)=\int_0^t \int_{K}e^{-i\xi \cdot y}\frac{\sin((t-s)|\xi|)}{|\xi|}\sigma(u_N(s,y))L^Q(ds,dy),
\end{equation}
This follows by applying twice the stochastic Fubini theorem given by Theorem A.3.2 of \cite{CDH19}. For the first application of this theorem, we need to check that
\[
\int_0^t \int_{K}\int_{\{|z|\leq 1\}}
\bE\left[ \left(\int_{\bR^2}G_{t-s}(x-y)|\sigma(u_N(s,y))\cF \varphi(x)|dx\right)^p \right]|z|^p \nu(dz)dyds<\infty.
\]
This follows using Assumption A, \eqref{p-mom-finite}, $|\cF \varphi(x)|\leq \|\varphi\|_{L^1(\bR^2)}$ and $\int_{\bR^2}G_{t-s}(x-y)dx=t-s$. For the second application of Fubini's theorem, we need to check that
\[
\int_0^t \int_{K}\int_{\{|z|\leq 1\}}
\bE\left[ \left(\int_{\bR^2}\left|e^{-i \xi \cdot y} \frac{\sin((t-s)|\xi|)}{|\xi|}\sigma(u_N(s,y)) \varphi(\xi)\right|d\xi\right)^p \right]|z|^p \nu(dz)dyds<\infty.
\]
This follows using Assumption A, \eqref{p-mom-finite}, and the fact that
\[
\int_{\bR^2} \frac{|\sin((t-s)|\xi|)|}{|\xi|}|\varphi(\xi)|d \xi \leq \|\varphi\|_{L^2(\bR^2)}\left( \int_{\bR^2} \frac{\sin^2((t-s)|\xi|)}{|\xi|^2}d \xi\right)^{1/2} \leq C,
\]
since $\sin^2((t-s)|\xi|)/|\xi|^2$ can be bounded by $(t-s)^2$ if $|\xi|\leq 1$ and by $1/|\xi|^2$ if $|\xi|>1$.

For any $t \in [0,T]$, $u_N^{1,1}(t,\cdot)\in H^r(\bR^2)$ with probability $1$, because
\begin{align*}
&\bE\left[\int_{\bR^2}|a_{\xi}(t)|^2(1+|\xi|^2)^{r}d\xi\right] \\
&=\int_{\bR^2} \bE \left|\int_0^t \int_{K} \int_{|z|\leq 1}e^{-i \xi \cdot y} \frac{\sin((t-s)|\xi|)}{|\xi|}
\sigma(u_N(s,y))z J(ds,dy,dz) \right|^2 (1+|\xi|^2)^{r}d\xi\\
&\leq C \int_{\bR^2} \left\{\int_0^t \int_{K} \int_{|z|\leq 1}\frac{\sin^2((t-s)|\xi|)}{|\xi|^2}
\bE|\sigma(u_N(s,y))|^2 |z|^2 \nu(dz) dyds
+\right.\\
&\quad \quad \quad \left.\bE\left[\left(\int_0^t \int_{K} \int_{|z|\leq 1}\frac{|\sin((t-s)|\xi|)|}{|\xi|}
|\sigma(u_N(s,y))| |z| \nu(dz) dyds\right)^2 \right]\right\}
(1+|\xi|^2)^{r}d\xi \\
& \leq C \int_{\bR^2} \left\{\int_0^t \frac{\sin^2((t-s)|\xi|)}{|\xi|^2} ds+\left(\int_0^t \frac{|\sin((t-s)|\xi|)|}{|\xi|} ds\right)^2\right\}(1+|\xi|^2)^{r}d\xi<\infty,
\end{align*}
using Lemma \ref{mom-N}, the fact that $\sigma$ is bounded, $\int_{|z|\leq 1}|z|\nu(dz) <\infty$
and $r<-1$.

The fact that $\{u_{N}^{1,1}(t,\cdot)\}_{t \in [0,T]}$ is stochastically continuous in $H^r(\bR^2)$ follows from
\[
\bE|a_{\xi}(t+h)-a_{\xi}(t)|^2 \leq C h^2,
\]
which is proved using an inequality similar to \eqref{a-incr-1} with $L^M$ replaced by $L^Q$, followed by an application of Lemma \ref{mom-N}. Finally, relation \eqref{mom-incr} also holds with $\delta=3$. To prove this, we proceed as in Case 1, replacing $L^M$ by $L^Q$, and applying Lemma \ref{mom-N} with $p=4$.
\end{proof}

We consider now the case of an unbounded function $\sigma$.

\begin{theorem}
Suppose that $d=2$ and Assumption A holds with $p<2$. Let $\{u(t,x);t\in [0,T],x \in \bR^2\}$ be the solution to equation \eqref{wave-eq} on the interval $[0,T]$, constructed in Theorem \ref{exist-th} but with stopping times $(\tau_N)_{N\geq 1}$ defined by \eqref{def-tau2}. Then the process $\{u(t,\cdot)\}_{t \in [0,T]}$ has a c\`adl\`ag modification with values in $H_{\rm loc}^r(\bR^2)$, for any $r<-1$.
\end{theorem}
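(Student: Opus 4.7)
The plan is to extend Theorem \ref{th-sigma-bded} from bounded to Lipschitz $\sigma$ by a truncation-and-localization argument. As in the bounded case, for $N$ large enough $\tau_N > T$ a.s., so it suffices to establish the c\`adl\`ag property of $u_N$ for each fixed $N \geq 1$.

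For each integer $M \geq 1$, define the Lipschitz truncation $\sigma_M : \bR \to \bR$ by $\sigma_M(u) = \sigma(((-M) \vee u) \wedge M)$, which is bounded with the same Lipschitz constant as $\sigma$. By Theorem \ref{th-sigma-bded}, the unique solution $u_N^M$ to equation \eqref{wave} with $\sigma$ replaced by $\sigma_M$ admits a c\`adl\`ag modification $\tilde u_N^M$ with values in $H_{\rm loc}^r(\bR^2)$. Fix $\varphi \in C_c^{\infty}(\bR^2)$ with support contained in the ball $B_A = \{x \in \bR^2 : |x| \leq A\}$. By the compact support of $G_{t-s}(\cdot)$ in $\{y : |y| \leq t-s\}$ (finite propagation speed), $u_N(t,\cdot)\varphi$ is determined by the values of $u_N$ and $L_N$ on the backward cone $[0,t] \times B_{A+T}$, which is self-contained under the dynamics. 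Define the stopping time
\[
\tau_M = \inf\left\{t \in [0,T] : \sup_{(s,y) \in [0,t] \times B_{A+T}} |u_N(s,y)| > M\right\},
\]
using a pathwise version of $u_N$ obtained as an almost-sure subsequential limit of the Picard iterates from Theorem \ref{exist-th-K}. On $\{t \leq \tau_M\}$, $\sigma(u_N(s,y)) = \sigma_M(u_N(s,y))$ for all $(s,y)$ in the cone, so the integral equation for $u_N$ coincides there with that of $u_N^M$; a Gronwall-type argument based on Lemma \ref{lem-p-mom} applied to $u_N - u_N^M$ (using the uniform $L^p$-moment bound of Theorem \ref{exist-th-K} and the Lipschitz property of $\sigma_M$) then yields $u_N(t,\cdot)\varphi = \tilde u_N^M(t,\cdot)\varphi$ almost surely for every $t \leq \tau_M$.

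The main technical obstacle is verifying that $\tau_M \to T$ almost surely as $M \to \infty$, equivalently $\sup_{[0,T] \times B_{A+T}} |u_N(s,y)| < \infty$ a.s. I would establish this by exploiting the decomposition $L_N = b\lambda + L^M + L_N^P$ from the proof of Theorem \ref{th-sigma-bded}: the compound-Poisson part $L_N^P$ has finite intensity on the bounded cone (since $\int_{B_{A+T}}\int_{1<|z|\leq N h(x)}\nu(dz)dx < \infty$ thanks to the local boundedness of $h$ and the hypothesis $\int_{|z|>1}|z|^q\nu(dz) < \infty$), so it contributes only finitely many explicit atoms pathwise; the drift term is Lipschitz continuous; and the small-jump martingale part is controlled pathwise via a Kolmogorov-type argument combined with the uniform $L^p$-bound of Theorem \ref{exist-th-K}. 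Once $\tau_M \to T$ a.s., for almost every $\omega$ there exists $M(\omega)$ such that $u_N(t,\cdot)\varphi = \tilde u_N^{M(\omega)}(t,\cdot)\varphi$ for all $t \in [0,T]$, which is c\`adl\`ag in $H^r(\bR^2)$; since $\varphi$ was arbitrary in $C_c^{\infty}(\bR^2)$, this yields the c\`adl\`ag modification in $H_{\rm loc}^r(\bR^2)$.
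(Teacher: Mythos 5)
There is a genuine gap at the step you yourself flag as the main obstacle: the claim that $\tau_M \to T$ a.s., i.e.\ that $\sup_{(s,y)\in[0,T]\times B_{A+T}}|u_N(s,y)|<\infty$ almost surely. In dimension $d=2$ this pathwise local boundedness is not available and should not be expected. First, the theorem \eqref{p-mom-finite} only gives pointwise moment bounds of order $p<2$, with $u_N(t,x)$ defined for each fixed $(t,x)$ as an $L^p(\Omega)$-limit; nothing in the construction provides a version with locally bounded sample paths, and a ``Kolmogorov-type argument'' for the small-jump part would require increment estimates in the three parameters $(t,x)$ of order strictly larger than $3$, which are out of reach when only $p$-th moments with $p<2$ exist. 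Second, and more decisively, your own treatment of the compound-Poisson part shows why the statement fails: each atom $(T_i,X_i,Z_i)$ of $L_N^P$ inside the cone contributes a term $G_{t-T_i}(x-X_i)\sigma(u_N(T_i,X_i))Z_i$ to $u_N(t,x)$, and in $d=2$ the kernel $G_{t-T_i}(x-X_i)=\frac{1}{2\pi}((t-T_i)^2-|x-X_i|^2)^{-1/2}1_{\{|x-X_i|<t-T_i\}}$ is unbounded on every neighbourhood of the light cone $\{|x-X_i|=t-T_i\}$. Hence after the first large jump in the region, the supremum defining $\tau_M$ is infinite for all later times, $\tau_M$ does not increase to $T$, and the localization on which your comparison $u_N=u_N^M$ rests collapses. (This is consistent with the conclusion of the theorem itself: $u(t,\cdot)$ is only asserted to live in $H^r_{\rm loc}(\bR^2)$ with $r<-1$, a space of genuine distributions, not of locally bounded functions.)

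The paper's proof avoids any pathwise boundedness claim by truncating $\sigma$ \emph{inside the stochastic integral} rather than in the equation: with $\sigma_n(u)=\sigma(u)1_{\{|u|\le n\}}$ one keeps $u_N$ the solution of the original equation and sets $u^{1,1}_{N,n}(t,x)=\int_0^t\int_K G_{t-s}(x-y)\sigma_n(u_N(s,y))L^M(ds,dy)$; each such process is c\`adl\`ag in $H^r(\bR^2)$ by the bounded-$\sigma$ case (Theorem \ref{th-sigma-bded}), and the convergence $u^{1,1}_{N,n}\to u^{1,1}_N$, \emph{uniformly in} $t\in[0,T]$ in $H^r(\bR^2)$ along a subsequence, is obtained via a change of measure (so that $L^M$ becomes an $L^2$-random measure), a maximal inequality, and stochastic dominated convergence, using only \eqref{p-mom-finite}. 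The jump part is a finite sum handled by Lemma \ref{lem-G-t0}, and the drift part is treated by the same truncation. If you want to salvage your route, you would have to replace the event $\{\sup|u_N|\le M\}$ by a localization that is compatible with a merely distribution-valued solution, which essentially forces you back to truncating the integrand rather than the equation.
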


\begin{proof}
We use the same argument as in the proof of Theorem 2.15 of \cite{CDH19} for the heat equation. It suffices to prove the result for $u_N$, for arbitrary $N\geq 1$. We consider separately the cases $p\geq 1$ and $p \in (0,1)$.

\medskip

{\em Case 1. $p \geq 1$.} We use decomposition \eqref{decomp-uN}.

\medskip

\underline{We treat $u_N^1(t,x)$.}
We write $u_{N}^{1}(t,x)=u_{N}^{1,1}(t,x)+u_{N}^{1,2}(t,x)$ with $u_{N}^{1,1}(t,x)$ and $u_{N}^{1,2}(t,x)$ given by \eqref{def-uN-11}, respectively \eqref{def-uN-12}. Then $u_{N}^{1,2}(t,\cdot)1_{\{|x|\leq A\}}=0$ for any $A \geq T$.

It remains to treat $u_{N}^{1,1}(t,x)$.
The Fourier transform of $u_N^{1,1}(t,\cdot)$ is still given by \eqref{Fourier-uN-11} (the above justification of this formula does not use the boundedness of $\sigma$). But if $\sigma$ is unbounded, it is not immediately clear why $\int_{\bR^2}|a_{\xi}(t)|^2(1+|\xi|^2)^rd\xi<\infty$ a.s.

Consider the truncated function $\sigma_n(u)=\sigma(u)1_{\{|u|\leq n\}}$, and define
\[
u_{N,n}^{1,1}(t,x)=\int_0^t \int_{K} G_{t-s}(x-y) \sigma_n(u_{N}(s,y)) L^M(ds,dy).
\]

Since $\sigma_n$ is bounded,
$\{u_{N,n}^{1,1}(t,\cdot)\}_{t \in [0,T]}$ has a c\`adl\`ag modification in $H^r(\bR^2)$, for any $n\geq 1$.
This follows from the proof of Theorem \ref{th-sigma-bded}.
Since the uniform limit of a sequence of c\`adl\`ag functions is c\`adl\`ag, it suffices to show that with probability $1$, the sequence $\{u_{N,n}^{1,1}(t,\cdot)\}_{n\geq 1}$ converges to $u_{N}^{1,1}(t,\cdot)$ as $n \to \infty$, uniformly in $t \in [0,T]$.
To achieve this, we will use a change of measure, as in \cite{CDH19}.
Let $\sigma_{(n),N}(s,y)=\sigma(u_N(s,y)) 1_{\{|u_N(s,y)|>n\}}$. Then
\[
u_{N}^{1,1}(t,x)-u_{N,n}^{1,1}(t,x)=\int_0^t \int_{K}G_{t-s}(x-y) \sigma_{(n),N}(s,y)L^M(ds,dy)
\]
\[
a_{\xi}^{(n)}(t):=\cF \big(u_{N}^{1,1}(t,\cdot)-u_{N,n}^{1,1}(t,\cdot)\big)(\xi)=\int_0^t \int_{K} e^{-i \xi \cdot y} \frac{\sin((t-s)|\xi|)}{|\xi|}
\sigma_{(n),N}(s,y)  L^M(ds,dy)
\]
\begin{equation}
\label{diff-uN}
\sup_{t \in [0,T]} \|u_{N}^{1,1}(t,\cdot)-u_{N,n}^{1,1}(t,\cdot)\|_{H^r(\bR^2)}^2  \leq \int_{\bR^2} (1+|\xi|^2)^r \sup_{t \in [0,T]}   |a_{\xi}^{(n)}(t)|^2 d\xi.
\end{equation}
To evaluate $a_{\xi}^{(n)}(t)$, we use the fact that
$\frac{\sin((t-s)|\xi|)}{|\xi|}=\int_s^t \cos((t-r)|\xi|)dr$. By the stochastic Fubini theorem given by Theorem A.3.1 of \cite{CDH19},
\[
a_{\xi}^{(n)}(t)=\int_0^t \cos((t-r)|\xi|) \left(\int_0^r \int_K
e^{-i \xi \cdot y}\sigma_{(n),N}(s,y)  L^M(ds,dy) \right) dr.
\]
To justify the application of this theorem, we need to check that:
\[
\int_0^t \left(\int_0^r \int_{K} \int_{|z|\leq 1}
\bE|e^{-i \xi \cdot y} \cos((t-r)|\xi|) \sigma_{(n),N}(s,y) z|^p \nu(dz) dy ds
 \right)^{1/p} dr<\infty.
\]
This follows by Assumption A, \eqref{p-mom-finite} and the bound
$|\cos((t-r)|\xi|)| \leq 1$. For any $t \in [0,T]$,
\[
|a_{\xi}^{(n)}(t)|  \leq \sup_{r \in [0,T]}\left| \int_0^r \int_{K}e^{-i \xi \cdot y} \sigma_{(n),N}(s,y) L^M(ds,dy)\right| \cdot \int_0^t |\cos((t-r)|\xi|)|dr,
\]
and hence
\begin{equation}
\label{bound-a}
\sup_{t \in [0,T]}|a_{\xi}^{(n)}(t)|
\leq T \sup_{r \in [0,T]}\left| \int_0^r \int_{K}e^{-i \xi \cdot y} \sigma_{(n),N}(s,y) L^M(ds,dy)\right|.
\end{equation}

We intend to apply Theorem A.4 of \cite{CDH19} to the random measure $L^M$. First, we check that $\sigma(u_N) 1_K \in L^{1,p}(L^M)$ (we refer to Appendix A of \cite{CDH19} for the notation). To see this, we use  relation (A.3) of \cite{CDH19}, Assumption A and \eqref{p-mom-finite}:
\[
\|\sigma(u_N) 1_K \|_{L^M,p}^p \leq C \int_0^T \int_K \int_{\{|z| \leq 1\}} \bE|\sigma(u_N(s,y))|^p |z|^p \nu(dz) dyds<\infty.
\]
By Theorem A.4 of \cite{CDH19}, there exists a probability measure $\bQ$ on $(\Omega,\cF)$ which is equivalent to $\bP$, such that $\frac{d\bQ}{d\bP}$ is bounded, $\frac{d\bP}{d\bQ} \in L^{\frac{2}{2-p}}(\Omega,\cF,\bP)$, $L^M$ is an $L^2$-random measure under $\bQ$ and $\sigma(u_N)1_K \in L^{1,2}(L^M,\bQ)$.
Hence,
\[
\bE_{\bQ}\left[\sup_{t \in [0,T]}\left|\int_0^t \int_{K} e^{-i \xi \cdot y} \sigma_{(n),N}(s,y) L^M(ds,dy) \right|^2\right]
\leq \|\sigma_{(n),N}1_{K}\|_{L^M,2,\bQ}^2,
\]
where $\bE_{\bQ}$ denotes the expectation with respect to $\bQ$.

We return to \eqref{diff-uN}. We use inequality \eqref{bound-a}, then we take expectation with respect to $\bQ$. We obtain that
\begin{align*}
& \bE_{\bQ}\left[\sup_{t \in [0,T]}\|u_{N}^{1,1}(t,\cdot)-u_{N,n}^{1,1}(t,\cdot)\|_{H^r(\bR^2)}^2\right]
\\
& \quad \leq T^2 \int_{\bR^2}(1+|\xi|^2)^r \bE_{\bQ}\left[\sup_{t \in [0,T]}\left|\int_0^t \int_{K} e^{-i \xi \cdot y} \sigma_{(n),N}(s,y) L^M(ds,dy) \right|^2\right] d\xi \\
& \quad \leq T^2 \|\sigma_{(n),N}1_{K}\|_{L^M,2,\bQ}^2 \int_{\bR^2}(1+|\xi|^2)^r d\xi \to 0 \quad \mbox{as} \quad n\to \infty,
\end{align*}
using the (stochastic) dominated convergence theorem given by Theorem A.1 of \cite{CDH19}, applied to the random measure $L^M$ (under the probability measure $\bQ$). 
In our case, the integrand $\sigma_{(n),N}1_{K} \to 0$ converges pointwise to $0$ as $n \to \infty$, and is bounded by the random function $|\sigma(u_N)1_K|$ which belongs to $L^{1,2}(L^M,\bQ)$.

Therefore, there exists a subsequence $N'\subset \bN$ such that with probability $1$,
\[
\sup_{t \in [0,T]}\|u_{N}^{1,1}(t,\cdot)-u_{N,n}^{1,1}(t,\cdot)\|_{H^r(\bR^2)} \to 0 \quad \mbox{as} \quad n \to \infty,n\in N'.
\]
Since $\{u_{N,n}^{1,1}(t,\cdot)\}_{t \in [0,T]}$ has a c\`adl\`ag modification in $H^r(\bR^2)$ for any $n\geq 1$, the process
$\{u_{N}^{1,1}(t,\cdot)\}_{t \in [0,T]}$ will inherit this property too.

\medskip

\underline{We treat $u_N^2(t,x)$.} The same argument as in the proof of Theorem \ref{th-sigma-bded} shows that $\{u_N^2(t,\cdot)\}_{t \in [0,T]}$ is c\`adl\`ag in
$H_{\rm loc}^r(\bR^2)$.

\medskip

\underline{We treat $u_N^3(t,x)$.} Let $A\geq T$ be arbitrary and $K=\{y \in \bR^2;|y| \leq 2A\}$. We write $u^{3}(t,x)=u^{3,1}(t,x)+u^{3,2}(t,x)$, where
\begin{align}
\label{def-uN-31}
u^{3,1}(t,x) &= b\int_0^t \int_{K}G_{t-s}(x-y) \sigma(u_N(s,y))dyds \\
\label{def-uN-32}
u^{3,2}(t,x) &= b\int_0^t \int_{K^c}G_{t-s}(x-y) \sigma(u_N(s,y))dyds.
\end{align}
As for $u^{1,2}$, we see that $u^{3,2}(t,\cdot)1_{\{|x|\leq A\}}=0$, using the compact support property of $G$.

It remains to treat $u^{3,1}$. For this, we consider again the truncated function $\sigma_n$ as above and we let
\[
u_{N,n}^{3,1}(t,x)=b\int_0^t \int_{K}G_{t-s}(x-y) \sigma_n(u_N(s,y))dyds.
\]
Since $\sigma_n$ is bounded, $\{u_{N,n}^{3,1}(t,\cdot)\}_{t \in [0,T]}$ is continuous in $H_{\rm loc}^r(\bR^2)$, by Lemma 2.13 of \cite{CDH19}.

Similarly to \eqref{bound-a} (but with $ds dy$ instead of $L^M(ds,dy)$), we have
\begin{align*}
\sup_{t \in [0,T]}|\cF \big(u_{N}^{3,1}(t,\cdot)-u_{N,n}^{3,1}(t,\cdot)\big)(\xi)| & \leq bT \sup_{r \in [0,T]}\left|\int_0^r \int_{K} e^{-i \xi \cdot y} \sigma_{(n),N}(s,y) dyds \right| \\
&\leq
 bT \int_0^T \int_{K}|\sigma_{(n),N}(s,y)|dyds,
\end{align*}
and hence
\[
\sup_{t \in [0,T]}\|u_N^{3,1}(t,\cdot)-u_{N,n}^{3,1}(t,\cdot)\|_{H^r(\bR^2)}^2 \to 0 \quad \mbox{as} \quad n \to\infty.
\]
It follows that $\{u_{N}^{3,1}(t,\cdot)\}_{t \in [0,T]}$ is continuous in $H_{\rm loc}^r(\bR^2)$.

\medskip

{\em Case 2. $p \in (0,1)$.} We use \eqref{decomp-uN-1}. The term $u_N^2$ is the same as in Case 1. To treat $u_N^1$, let $A \geq T$ be arbitrary and $K=\{y \in \bR^2;|y|\leq 2A\}$. We write $u_N^1=u_N^{1,1}+u_{N}^{1,2}$, where $u_N^{1,1}$ and $u_{N}^{1,2}$ are given by \eqref{def-uN-11-1}, respectively \eqref{def-uN-12-1}. Then $u_N^{1,2}(t,\cdot)1_{\{|x|\leq A\}}=0$.

It remains to study $u_N^{1,1}$. The Fourier transform of $u_{N}^{1,1}(t,\cdot)$ is given by
\eqref{Fourier-uN-11-1}. The justification of this formula does not use the fact that $\sigma$ is bounded. Let $\sigma_n(u)=\sigma(u) 1_{\{|u|\leq n\}}$ and define
\[
u_{N,n}^{1,1}(t,x)=\int_0^t \int_{K}G_{t-s}(x-y)\sigma_n(u_{N}(s,y))L^Q(ds,dy).
\]
Since $\sigma_n$ is bounded, $\{u_{N,n}^{1,1}(t,\cdot)\}_{t \in [0,T]}$ has a c\`adl\`ag modification in $H^r(\bR^2)$. This follows from the proof of Theorem \ref{th-sigma-bded} in the case $p<1$. As in Case 1, replacing $L^M$ by $L^Q$, we have:
\begin{align*}
a_{\xi}^{(n)}(t)&:=\cF \big(u_{N}^{1,1}(t,\cdot)-u_{N,n}^{1,1}(t,\cdot)\big)(\xi)=\int_0^t \int_{K} e^{-i \xi \cdot y} \frac{\sin((t-s)|\xi|)}{|\xi|}
\sigma_{(n),N}(s,y)  L^Q(ds,dy) \\
&=\int_0^t \left(\int_0^r \int_{K} e^{-i \xi \cdot y} \cos((t-r)|\xi|)\sigma_{(n),N}(s,y)  L^Q(ds,dy) \right) dr.
\end{align*}
The last equality above is due to the stochastic Fubini theorem given by Theorem A.3.2 of \cite{CDH19}. To justify the application of this theorem, we need to check that
\[
\int_0^t \int_{K} \int_{\{|z|\leq 1\}} \bE \left[\left( \int_s^t |e^{-i \xi \cdot y} \cos((t-r)|\xi|) \sigma_{(n),N}(s,y)| dr \right)^p\right] |z|^p \nu(dz) dy ds<\infty.
\]
This follows using Assumption A, \eqref{p-mom-finite} and the bound $|\cos((t-r)|\xi|)|\leq 1$.

We now apply the change of measure of Theorem A.4 of \cite{CDH19} to the random measure $L^Q$. To check that $\sigma(u_N) 1_K \in L^{1,p}(L^Q)$, we use Lemma A.2.3 of \cite{CDH19}, Assumption A and \eqref{p-mom-finite}:
\[
\|\sigma(u_N) 1_K \|_{L^Q,p} \leq \int_0^T \int_K \int_{\{|z| \leq 1\}} \bE|\sigma(u_N(s,y))|^p |z|^p \nu(dz) dyds<\infty.
\]
The rest of the proof is the same as in Case 1.
\end{proof}

\subsection{Case $d=1$}

In this case, $G_t \in H^r(\bR)$ for any $r<1/2$ and $t>0$. On the other hand, $G_0=\frac{1}{2}1_{\{0\}}$ since
\[
G_0(x):=\lim_{t\to 0+}G_t(x)=
\left\{
\begin{array}{ll}
0 & \mbox{if $x \not =0$} \\
1/2 & \mbox{if $x=0$}
\end{array} \right.
\]
Therefore, $G_0 \in H^r(\bR)$ for any $r \in \bR$. For $t>0$, $G_t \in  H^r(\bR)$ if $r<1/2$, due to \eqref{FG-bound}.

\medskip

The following result is the counterpart of Lemma \ref{lem-G-t0} for $d=1$.

\begin{lemma}
\label{lem-G-t0-d1}
If $d=1$, for any $t_0 \in [0,T]$, $x_0 \in \bR^2$, the map $[0,T] \ni t\mapsto G_{t-t_0}(\, \cdot-x_0)$ is c\`adl\`ag in $H^r(\bR^2)$ for any $r<1/2$.
\end{lemma}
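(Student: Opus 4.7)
The plan is to mirror the proof of Lemma~\ref{lem-G-t0}, adjusting only for two differences particular to $d=1$: first, the relevant Sobolev range is $r<1/2$ rather than $r<-1$; second, and more importantly, $G_0=\tfrac{1}{2}1_{\{0\}}$ agrees a.e.\ with the zero function, so it represents the zero element of $H^r(\bR)$. Accordingly, I define $F:[0,T]\to H^r(\bR)$ by
\[
F(t)=\begin{cases} G_{t-t_0}(\,\cdot-x_0) & \text{if } t>t_0,\\ 0 & \text{if } t\le t_0, \end{cases}
\]
and show that $F$ is c\`adl\`ag. Continuity on $[0,t_0)$ is trivial; the three remaining cases are continuity at $t>t_0$, right-continuity at $t_0$, and existence of the left limit at $t_0$ (which will be $0$, so the jump is $G_0=0\in H^r$).

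For continuity at $t>t_0$, I use the identity $\sin((t+h)|\xi|)-\sin(t|\xi|)=2\sin(h|\xi|/2)\cos((2t+h)|\xi|/2)$ together with \eqref{Fourier-G} to write
\[
\|F(t+h)-F(t)\|_{H^r(\bR)}^2=4\int_{\bR}\frac{\sin^2(h|\xi|/2)\cos^2((2t+h)|\xi|/2)}{|\xi|^2}(1+|\xi|^2)^r d\xi,
\]
and then invoke dominated convergence. The domination uses $\sin^2(h|\xi|/2)\le C\,|\xi|^2/(1+|\xi|^2)$ (a consequence of \eqref{key}), so the integrand is bounded by $C(1+|\xi|^2)^{r-1}$; this is integrable on $\bR$ precisely for $r<1/2$, which is where the dimension‑dependent threshold enters.

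For right‑continuity at $t_0$ I must show $\|G_h(\,\cdot-x_0)\|_{H^r(\bR)}\to 0$ as $h\to 0+$. Since $|\cF G_h(\xi)|^2=\sin^2(h|\xi|)/|\xi|^2\to 0$ pointwise, dominated convergence again finishes the job once I provide an $h$‑uniform (for $h\le 1$) integrable bound; inequality \eqref{key} gives exactly $\sin^2(h|\xi|)/|\xi|^2\le 2(1+|\xi|^2)^{-1}$, so the integrand is bounded by $2(1+|\xi|^2)^{r-1}$, integrable for $r<1/2$. The left limit at $t_0$ is immediate because $F\equiv 0$ on $[0,t_0)$, and it coincides with $F(t_0)=0$, which actually makes $F$ \emph{continuous} at $t_0$. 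The only real subtlety — and the reason this lemma looks substantially cleaner than its $d=2$ counterpart — is that the distributional limit $G_0=\tfrac12 1_{\{0\}}$ vanishes in $H^r(\bR)$, so no genuine jump appears and the map is in fact continuous; the obstacle one might worry about, namely matching $F(t_0)$ with an $H^r$‑limit of $G_h$, dissolves once one notes this a.e.\ triviality.
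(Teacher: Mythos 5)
Your proof is correct and follows essentially the same route as the paper: continuity for $t>t_0$ via the sine-difference identity and dominated convergence (with integrability of $(1+|\xi|^2)^{r-1}$ forcing $r<1/2$), and right-continuity at $t_0$ from $\int_{\bR}\sin^2(h|\xi|)|\xi|^{-2}(1+|\xi|^2)^r d\xi\to 0$. Your explicit observation that $G_0=\tfrac12 1_{\{0\}}$ is the zero element of $H^r(\bR)$ is exactly what the paper uses implicitly when it drops the term $\tfrac12\cF 1_{\{x_0\}}=0$, so setting $F(t_0)=0$ (and noting $F$ is in fact continuous at $t_0$) is the same argument in slightly cleaner form.
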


\begin{proof}
We have to prove that the function $F:[0,T] \to H^r(\bR^2)$ given by
\[
F(t)=
\left\{
\begin{array}{ll}
G_{t-t_0}(\, \cdot -x_0) & \mbox{if $t >t_0$} \\
2^{-1}1_{\{x_0\}} & \mbox{if $t=t_0$}\\
0 & \mbox{if $t<t_0$}
\end{array} \right.
\]
is c\`adl\`ag. The fact that $F$ is continuous at any point $t>t_0$ follows as in the proof of Lemma \ref{lem-G-t0-d1}.
$F$ is right-continuous at $t_0$ since
\begin{align*}
\|F(t_0+h)-F(t_0)\|_{H_r^2(\bR)}&=\int_{\bR} |\cF G_h(\cdot-x_0)(\xi)-\frac{1}{2}\cF 1_{\{x_0\}}(\xi)|^2 (1+|\xi|^2)^r d\xi\\
&=\int_{\bR^2}\left|e^{-i \xi \cdot x_0} \frac{\sin(h|\xi|)}{|\xi|} \right|^2 (1+|\xi|^2)^r d\xi \to 0
\end{align*}
as $h \to 0+$, by the dominated convergence theorem.
\end{proof}

Unlike the case $d=2$, the solution $u_N$ of the equation with truncated noise has finite moments of order $p$, for any $p\geq 2$. Therefore, we do not need to consider separately the case of a bounded function $\sigma$. Note that we impose a new restriction $r<1/4$ (compared with Lemma \ref{lem-G-t0-d1}) which comes from the analysis of increments; this introduces additional requirements on the initial conditions $u_0$ and $v_0$.

\begin{theorem}
Assume that $d=1$ and there exists $q>0$ such that
\[
\int_{\{|z|>1\}}|z|^q \nu(dz)<\infty.
\]
In addition to the initial conditions mentioned at the beginning, we suppose that $u_0,v_0 \in L^1(\bR)$ and $|\cF u_0(\xi)|\leq c \frac{1}{1+|\xi|^2}$ for any $\xi \in \bR$ for some constant $c>0$.

Let $\{u(t,x);t\in [0,T],x \in \bR\}$ be the solution to equation \eqref{wave-eq} on the interval $[0,T]$, constructed in Theorem \ref{exist-th} but with stopping times $(\tau_N)_{N\geq 1}$ defined by \eqref{def-tau2}. Then the process $\{u(t,\cdot)\}_{t \in [0,T]}$ has a c\`adl\`ag modification with values in $H_{\rm loc}^r(\bR^2)$, for any $r<1/4$.
\end{theorem}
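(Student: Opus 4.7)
The plan is to mimic the structure of the proof in the $d=2$ case (Theorems 3.2 and 3.3), while exploiting the key simplification that in dimension $d=1$ the solution $u_N$ has finite moments of \emph{every} order $p\geq 2$ (Remark 2.2(ii)). As before, with probability one, for $N$ large enough, $\tau_N=\infty$ on $[0,T]$ and $u(t,x)=u_N(t,x)$, so it suffices to establish the c\`adl\`ag property for $u_N(t,\cdot)$ with $N$ fixed. I would use decomposition \eqref{decomp-uN} when $p\geq 1$ (respectively \eqref{decomp-uN-1} when $p<1$), writing $u_N=w+u_N^1+u_N^2+u_N^3$, and treat each piece separately in $H^r_{\rm loc}(\bR)$ for $r<1/4$.

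For the initial data term $w(t,x)=(G_t*v_0)(x)+\partial_t(G_t*u_0)(x)$, using the assumption $u_0,v_0 \in L^1(\bR)$ and the Fourier decay $|\cF u_0(\xi)|\leq c/(1+|\xi|^2)$, the Fourier transform of $w(t,\cdot)$ is $\sin(t|\xi|)|\xi|^{-1}\cF v_0(\xi) + \cos(t|\xi|)\cF u_0(\xi)$; combined with \eqref{key}, dominated convergence gives continuity $t\mapsto w(t,\cdot)$ in $H^r(\bR)$ for any $r<1/2$. The compound-Poisson term $u_N^2$ is handled exactly as in the proof of Theorem \ref{th-sigma-bded}: localize against $\varphi\in C_c^{\infty}(\bR)$, pick a ball of radius $A$ containing $\mathrm{supp}\,\varphi$, observe that the compact support of $G_{t-T_i}(\cdot-X_i)$ forces $|X_i|<A+T$, so only finitely many jumps contribute, and invoke Lemma \ref{lem-G-t0-d1} termwise. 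The drift term $u_N^3$ (when $p\geq 1$) is again split by compact support and reduced, after truncating $\sigma$ and using the full $p=2$ moment bound of \eqref{p-mom-finite} (no change of measure needed here because all moments are available), to a uniform limit in $H^r_{\rm loc}(\bR)$ of continuous processes.

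The heart of the proof is the martingale/small-jump piece $u_N^1$. I would split $u_N^1=u_N^{1,1}+u_N^{1,2}$ on $K=\{|y|\leq 2A\}$ and $K^c$, obtaining $u_N^{1,2}(t,\cdot)\mathbf{1}_{\{|x|\leq A\}}=0$ by compact support. Two applications of the stochastic Fubini theorem (Theorem A.3.1/A.3.2 of \cite{CDH19}), justified by Assumption A and \eqref{p-mom-finite}, produce the explicit Fourier representation
\[
a_{\xi}(t)=\cF u_N^{1,1}(t,\cdot)(\xi)=\int_0^t\int_K e^{-i\xi y}\frac{\sin((t-s)|\xi|)}{|\xi|}\sigma(u_N(s,y))L^{M}(ds,dy)
\]
(with $L^Q$ in place of $L^M$ when $p<1$). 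Using the full second moment available in $d=1$, the bound \eqref{key}, and Assumption A together with $\int_{|z|\leq 1}|z|^2\nu(dz)<\infty$, one verifies $\bE\int_\bR|a_\xi(t)|^2(1+|\xi|^2)^r d\xi<\infty$ for $r<1/2$ and obtains stochastic continuity in $H^r(\bR)$. Finally, to apply Theorems 1 and 5 of \cite{GS74}, I need the two-sided increment bound \eqref{mom-incr}, which by Cauchy--Schwarz reduces to a fourth-moment estimate $\int_\bR (1+|\xi|^2)^r (\bE|a_\xi(t+h)-a_\xi(t)|^4)^{1/2}d\xi\leq Ch^{(1+\delta)/2}$.

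The main obstacle, and the source of the restriction $r<1/4$, is precisely this fourth-moment Fourier-side estimate. Using a Rosenthal/BDG-type inequality for the compensated Poisson integral (Theorem \ref{max-ineq}, respectively Lemma \ref{mom-N} when $p<1$), the fourth moment splits into a square of a second-moment integral plus a genuine fourth-moment integral. The key building block is
\[
\int_0^t \frac{|\sin((t+h-s)|\xi|)-\sin((t-s)|\xi|)|^2}{|\xi|^2}ds \;\leq\; \frac{4t\sin^2(h|\xi|/2)}{|\xi|^2},
\]
whose integral against $(1+|\xi|^2)^r$ scales like $h^{1-2r}$ (splitting the $\xi$-integral at $|\xi|\sim 1/h$ and using $r<1/2$); the boundary pieces on $[t,t+h]$ and the $L^4$ versions give the better exponents $h^{2-2r}$ and $h^{3/2-2r}$. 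The worst of these, $h^{1-2r}$, yields after squaring and combining the $(t+h,t)$ and $(t-h,t)$ increments the bound $h^{2-4r}$ for the left-hand side of \eqref{mom-incr}, which is $h^{1+\delta}$ with $\delta>0$ exactly when $r<1/4$. This sharp balance between the Sobolev weight at infinity and the singularity of $\sin^2(h|\xi|/2)/|\xi|^2$ near $\xi=0$ dictates the threshold in the statement.
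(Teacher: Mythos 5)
Your proposal is correct and follows essentially the same route as the paper: reduce to $u_N$ with all moments finite in $d=1$, use the decomposition \eqref{decomp-uN} with compact-support localization, the stochastic Fubini representation of $\cF u_N^{1,1}(t,\cdot)(\xi)$, Lemma \ref{lem-G-t0-d1} for the Poisson part, a truncation of $\sigma$ for the drift part, and the Rosenthal--Cauchy--Schwarz fourth-moment estimate of the two-sided increments feeding into the Gihman--Skorohod criterion. The only (immaterial) differences are that in $d=1$ the case $p<1$ never arises, and that you extract the exponent by integrating in $\xi$ with a split at $|\xi|\sim 1/h$ (getting $h^{1-2r}$ per factor, hence $h^{2-4r}$), whereas the paper interpolates pointwise with a parameter $\e\in(1,2-4r)$; both yield the same threshold $r<1/4$.
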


\begin{proof}
Since $\int_{|z|>1}|z|^{q}\nu(dz)$ is nondecreasing in $q$, we can assume that $q\leq 2$. Therefore, Assumption A holds with $p\geq 2$ arbitrary and $q\in (0,2]$. By Theorem \ref{exist-th-K},
\begin{equation}
\label{finite-2-mom}
\sup_{t \in [0,T]} \sup_{|x|\leq R}\bE |u_{N}(t,x)|^{p}<\infty \quad \mbox{for any} \quad p\geq 2,
\end{equation}
for any $T>0$ and $R>0$. As mentioned above, it is enough to prove the result for $u_N$. Since $p>1$ in Assumption A, we use decomposition \eqref{decomp-uN}.

\medskip

\underline{We treat $w(t,x)$.}
We have
\[
w(t,x)=\frac{1}{2}\int_{x-t}^{x+t}v_0(y)dy+\frac{1}{2}[u_0(x+t)-u_0(x-t)]=:
w^1(t,x)+w^2(t,x).
\]
We treat separately the two terms. Note that $\cF w^1(t,\cdot)(\xi)=\cF G_t(\xi)\cF v_0(\xi)$ and
\begin{align*}
& \|w(t+h,\cdot)-w(t,\cdot)\|_{H^r(\bR)}^2=\int_{\bR}\frac{|\sin((t+h)|\xi|)-\sin(t|\xi|)|^2}{|\xi|^2} |\cF v_0(\xi)|^2 (1+|\xi|^2)^r d\xi\\
&\quad \quad \quad =\int_{\bR}\frac{4}{|\xi|^2} \sin^2\left(\frac{h|\xi|}{2} \right)\cos^2\left(\frac{2t+h}{2}|\xi|\right) |\cF v_0(\xi)|^2 (1+|\xi|^2)^r d\xi \to 0 \quad \mbox{as} \quad h\to0,
\end{align*}
by the dominated convergence theorem, using the fact that $|\cF v_0(\xi)| \leq \|v_0\|_{L^1(\bR)}$ and
$\frac{1}{|\xi|^2} \sin^2\left(\frac{h|\xi|}{2}\right) \leq C \frac{1}{1+|\xi|^2}$ for any $\xi \in \bR$. To apply this theorem, we only need $r<1/2$.

By direct calculation, $\cF w^2(t,\cdot)(\xi)=i\sin(\xi t) \cF u_0(\xi)$. Hence,
\begin{align*}
& \|w^2(t+h,\cdot)-w^2(t,\cdot)\|_{H^r(\bR)}^2 =\int_{\bR}
|\sin(\xi (t+h))-\sin(\xi t)|^2 |\cF u_0(\xi)|^2 (1+|\xi|^2)^r d\xi \\
& \quad \quad \int_{\bR} 4 \sin^2 \left(\frac{h|\xi|}{2} \right) \cos^2 \left( \frac{(2t+h)|\xi|}{2}\right) |\cF u_0(\xi)|^2 (1+|\xi|^2)^r d\xi
\to 0 \quad \mbox{as} \quad h\to0,
\end{align*}
by the dominated convergence theorem, using the fact that $|\cF u_0(\xi)|^2 \leq c \frac{1}{1+|\xi|^2}$ for all $\xi \in \bR$, and $r<1/2$.

\medskip

\underline{We treat $u_N^1(t,x)$.} We use the decomposition $u_N=u_N^1+u_N^2$ where $u_N^1,u_N^2$ are given by \eqref{def-uN-11}, respectively \eqref{def-uN-12}, and $K=[-2A,2A]$.
As in the case $d=2$, $u_{N}^{1,2}(t,\cdot)1_{[-A,A]}=0$ for any $A\geq T$. So it suffices to consider $u_N^{1,1}$.

The same argument as in the proof of Theorem \ref{th-sigma-bded} shows that $\{u_N^{1,1}(t,\cdot)\}_{t\in [0,T]}$ is stochastically continuous as $H^r(\bR^2)$-valued process, for any $r<1/2$. Instead of using the boundedness of $\sigma$, we now use \eqref{finite-2-mom} and the fact that $\sigma$ is Lipschitz.

Next, we prove \eqref{mom-incr}. For this, we use \eqref{bound-E} and \eqref{bound-E2}. By the Cauchy-Schwarz inequality,
\[
E_{t,h}^1(\xi,\eta)\leq I_1^{1/2}I_2^{1/2}, \quad E_{t,h}^2(\xi,\eta)\leq I_1^{1/2}I_3^{1/2}, \quad E_{t,h}^3(\xi,\eta)\leq I_4^{1/2}I_2^{1/2}, \quad E_{t,h}^4(\xi,\eta)\leq I_4^{1/2}I_3^{1/2},
\] where
\begin{align*}
I_1 &=\bE\left[ \left|\int_{0}^t \int_K e^{-i \xi y}
\frac{\sin((t+h-s)|\xi|-\sin((t-s)|\xi|))}{|\xi|}\sigma(u_N(s,y)) L^M(ds,dy)\right|^4 \right] \\
I_2 &=\bE\left[ \left|\int_{0}^t \int_K e^{-i \xi y}
\frac{\sin((t-h-s)|\eta|-\sin((t-s)|\eta|))}{|\eta|}\sigma(u_N(s,y)) L^M(ds,dy)\right|^4 \right]\\
I_3 &=\bE\left[ \left|\int_{t-h}^t \int_K e^{-i \xi y}
\frac{\sin((t-h-s)|\eta|)}{|\eta|}\sigma(u_N(s,y)) L^M(ds,dy)\right|^4 \right] \\
I_4 &=\bE\left[ \left|\int_{t}^{t+h} \int_K e^{-i \xi y}
\frac{\sin((t+h-s)|\xi|}{|\xi|}\sigma(u_N(s,y)) L^M(ds,dy)\right|^4 \right].
\end{align*}

We apply Theorem \ref{max-ineq}, followed by Cauchy-Schwarz inequality and \eqref{finite-2-mom}:
\begin{align*}
I_1 & \leq C\bE\left[ \left(\int_{0}^t \int_K
\frac{|\sin((t+h-s)|\xi|)-\sin((t-s)|\xi|))|^2}{|\xi|^2}|\sigma(u_N(s,y))|^2 dy ds\right)^2 \right]+\\
& \quad \quad C\bE\left[\int_{0}^t \int_K
\frac{|\sin((t+h-s)|\xi|)-\sin((t-s)|\xi|))|^4}{|\xi|^4}|\sigma(u_N(s,y))|^4 dy ds  \right] \\
& \leq  C\int_{0}^t \int_K
\frac{|\sin((t+h-s)|\xi|)-\sin((t-s)|\xi|))|^4}{|\xi|^4}\bE|\sigma(u_N(s,y))|^4 dy ds  \\
& \leq C \int_{0}^t
\frac{|\sin((t+h-s)|\xi|)-\sin((t-s)|\xi|))|^4}{|\xi|^4} ds\\
& \leq C \frac{1}{|\xi|^4}\left|\sin\left(\frac{h|\xi|}{2}\right)\right|^4=C \frac{1}{|\xi|^{\e}}\left|\sin\left(\frac{h|\xi|}{2}\right)\right|^{\e}\cdot
\frac{1}{|\xi|^{4-\e}}\left|\sin\left(\frac{h|\xi|}{2}\right)
\right|^{4-\e}\\
&\leq C h^{\e}\left(\frac{1}{1+|\xi|^2}\right)^{\frac{4-\e}{2}} \quad \mbox{for any} \quad \e \in [0,4]
\end{align*}
and
\begin{align*}
I_4 & \leq C \bE\left[\left(\int_{t}^{t+h} \int_K
\frac{|\sin((t+h-s)|\xi|)|^2}{|\xi|^2}|\sigma(u_N(s,y))|^2 dy ds\right)^2 \right]+\\
& \quad \quad C\bE\left[\int_{t}^{t+h} \int_K
\frac{|\sin((t+h-s)|\xi|)|^4}{|\xi|^4}|\sigma(u_N(s,y))|^4 dy ds  \right] \\
& \leq  C\int_{t}^{t+h} \int_K
\frac{|\sin((t+h-s)|\xi|)|^4}{|\xi|^4}\bE|\sigma(u_N(s,y))|^4 dy ds  \\
& \leq C \int_{t}^{t+h}
\frac{|\sin((t+h-s)|\xi|)|^4}{|\xi|^4} ds=C \int_{0}^h
\frac{|\sin(s|\xi|)|^4}{|\xi|^4} ds\\
&=C \int_{0}^h
\frac{|\sin(s|\xi|)|^{\e}}{|\xi|^{\e}} \cdot \frac{|\sin(s|\xi|)|^{4-\e}}{|\xi|^{4-\e}}ds \leq C \left(\frac{1}{1+|\xi|^2}\right)^{\frac{4-\e}{2}} \int_0^h s^{\e}ds\\
&= C h^{1+\e}\left(\frac{1}{1+|\xi|^2}\right)^{\frac{4-\e}{2}} \quad \mbox{for any} \quad \e \in [0,4].
\end{align*}
Similarly, for any $\e \in [0,4]$,
\[
I_2 \leq  C h^{\e}\left(\frac{1}{1+|\eta|^2}\right)^{\frac{4-\e}{2}} \quad \mbox{and} \quad I_3 \leq C h^{1+\e}\left(\frac{1}{1+|\eta|^2}\right)^{\frac{4-\e}{2}}.
\]
It follows that
\[
E_{t,h}(\xi,\eta)\leq C h^{\e}\left(\frac{1}{1+|\xi|^2}\right)^{\frac{4-\e}{4}}
\left(\frac{1}{1+|\eta|^2}\right)^{\frac{4-\e}{4}}
\]
and
\[
\bE[\|u(t+h,\cdot)-u(t,\cdot)\|_{H^r(\bR)}^2 \cdot
\|u(t-h,\cdot)-u(t,\cdot)\|_{H^r(\bR)}^2 ]\leq C h^{\e} \left[ \int_{\bR}
\left(\frac{1}{1+|\xi|^2}\right)^{\frac{4-\e}{4}-r}d\xi\right]^2.
\]
The last integral converges if and only if $\e<2-4r$. On the other hand, we need to choose $\e>1$. This introduces the restriction $r<1/4$. This proves \eqref{mom-incr}.
By Theorems 1 and 5 of \cite{GS74}, the process $\{u_N^1(t,\cdot)\}_{t \in [0,T]}$ has a c\'adl\'ag modification in $H^r(\bR)$ for any $r<1/4$.

\medskip

\underline{We treat $u_N^2(t,x)$.} Using the same argument as in the proof of Theorem \ref{th-sigma-bded} and Lemma \ref{lem-G-t0-d1}, we see that the process $\{u_N^2(t,\cdot)\}_{t \in [0,T]}$ is c\`adl\`ag in $H^r(\bR)$ for any $r<1/2$.

\medskip

\underline{We treat $u_N^3(t,x)$.} Let $A>0$ be arbitrary and $K=[-2A,2A]$. We write $u_N^3(t,x)=u_N^{3,1}(t,x)+u_N^{3,2}(t,x)$, where $u_N^{3,1}$ and $u_N^{3,2}$ are given by \eqref{def-uN-31} and \eqref{def-uN-32}.
Using the compact support property of $G$, we see that
$u_N^{3,2}(t,\cdot)1_{[-A,A]}=0$ for any $A>T$.

So, it suffices to consider $u_{N}^{3,1}$. Note that the Fourier transform of $u_{N}^{3,1}(t,\cdot)$ is given by
\[
\cF u_N^{3,1}(t,\cdot)(\xi)=b \int_0^t \int_{K}e^{-i \xi y} \frac{\sin((t-s)|\xi|)}{|\xi|} \sigma(u_{N}(s,y))dyds.
\]
We consider separately the cases when $\sigma$ is bounded and unbounded.

\medskip

{\em Case 1.} ($\sigma$ is bounded) By Minkowski's inequality,
\begin{align*}
\|u_{N}^{3,1}(t,\cdot)\|_{H^r(\bR)}&=\|\cF u_{N}^{3,1}(t,\cdot)\|_{L^2(\bR,(1+|\xi|^2)^r d\xi)} \\
& \leq b \int_0^t \int_{K} \left(\int_{\bR} \frac{\sin^2((t-s)|\xi|)}{|\xi|^2} (1+|\xi|^2)^r d\xi \right)^{1/2} |\sigma(u_N(s,y))| dyds\\
&\leq C \left[\int_{\bR} \left(\frac{1}{1+|\xi|^2} \right)^{1-r}d\xi \right]^{1/2}<\infty \quad \mbox{since} \quad r<1/2.
\end{align*}
We claim that $\{u_{N}^{3,1}(t,\cdot)\}_{t \in [0,T]}$ is continuous in $H^r(\bR)$, for any $r<1/2$. To see this, we write
\begin{align*}
&\cF \big(u_{N}^{3,1}(t+h,\cdot)-u_{N}^{3,1}(t,\cdot)\big)(\xi)=b \int_t^{t+h} \int_{K}\frac{\sin((t+h-s)|\xi|)}{|\xi|} \sigma(u_N(s,y))dyds+\\
& \quad \quad \quad \quad \quad \quad b\int_0^t \int_{K}e^{-i \xi y} \frac{\sin((t+h-s)|\xi|)-\sin((t-s)|\xi|)}{|\xi|}\sigma(u_N(s,y))dyds.
\end{align*}
Then applying Minkowski's inequality as above, we obtain:
\begin{align*}
& \|u_{N}^{3,1}(t+h,\cdot)-u_{N}^{3,1}(t,\cdot)\|_{H^r(\bR)} \leq\\
& \quad b \int_t^{t+h} \int_{K} \left(\int_{\bR} \frac{\sin^2((t+h-s)|\xi|)}{|\xi|^2} (1+|\xi|^2)^r d\xi \right)^{1/2} |\sigma(u_N(s,y))| dyds+\\
& \quad b \int_0^t \int_{K} \left(\int_{\bR} \frac{|\sin((t+h-s)|\xi|)-\sin((t-s)|\xi|)|^2}{|\xi|^2} (1+|\xi|^2)^r d\xi \right)^{1/2} |\sigma(u_N(s,y))| dyds
\end{align*}
and the last two integrals converge to $0$ as $h\to 0$, by the dominated convergence theorem.

\medskip

{\em Case 2.} ($\sigma$ is general) For any $n\geq 1$, let $\sigma_n(x)=\sigma(x) 1_{\{|x|\leq n\}}$ and define
\[
u_{N,n}^{3,1}(t,x)=b \int_0^t \int_{K}G_{t-s}(x-y)\sigma_n(u_{N}(s,y))dyds.
\]

By Case 1 above, $\{u_{N,n}^{3,1}(t,\cdot)\}_{t \in [0,T]}$ is continuous in $H^r(\bR)$, for any $n\geq 1$ and $r<1/2$. We fix $r<1/2$. We will prove that along a subsequence, with probability $1$, $\{u_{N,n}^{3,1}\}_{n\geq 1}$ converges to $u_{N}^{3,1}$ in $H^r(\bR)$ as $n \to \infty$, uniformly in $t \in [0,T]$. Since the uniform limit of continuous functions is continuous,  $\{u_{N}^{3,1}(t,\cdot)\}_{t \in [0,T]}$ will be continuous in $H^r(\bR)$.

We denote $\sigma_{(n),N}(s,y)=\sigma(u_N(s,y))-\sigma_n(u_N(s,y))$. By Fubini theorem,
\begin{align*}
\cF \big(u_{N}^{3,1}(t,\cdot)-u_{N,n}^{3,1}(t,\cdot) \big)(\xi)&=b\int_0^t \int_{K} e^{-i \xi y} \frac{\sin((t-s)|\xi|)}{|\xi|} \sigma_{(n),N}(s,y) dyds \\
&=b \int_0^t \cos((t-r)|\xi|) \left(\int_0^r \int_{K} e^{-i \xi y} \sigma_{(n),N}(s,y) dyds \right) dr.
\end{align*}
Hence,
\begin{align*}
\sup_{t \in [0,T]}|\cF \big(u_{N}^{3,1}(t,\cdot)-u_{N,n}^{3,1}(t,\cdot) \big)(\xi)|
& \leq b \left(\int_0^T \int_{K}|\sigma_{(n),N}(s,y)|dyds\right)
\left(\int_0^T |\cos(r|\xi|)|dr\right).
\end{align*}

We claim that
\[
\int_0^T |\cos(r|\xi|)|dr \leq C \left( \frac{1}{1+|\xi|^2}\right)^{1/2}.
\]
To see this, assume that $T \in \left((\frac{2k-1}{2}\vee 0)\pi,\frac{2k+1}{2}\pi\right)$ for some integer $k\geq 0$.
Say $k$ is even, $k=2m$ for some integer $m\geq 0$.
(The case when $k$ is odd is similar.) Then
\[
\int_0^T |\cos(r|\xi|)|dr =\frac{\sin(\frac{\pi}{2}|\xi|)}{|\xi|}+\sum_{\ell=1}^{m-1}\frac{
\sin(\frac{4\ell+1}{2}\pi)-\sin(\frac{4\ell-1}{2}\pi)}{|\xi|}-
\sum_{\ell=1}^{m} \frac{
\sin(\frac{4\ell-1}{2}\pi)-\sin(\frac{4\ell-3}{2}\pi)}{|\xi|},
\]
and we use the fact that for any $a<b$,
\[
\frac{|\sin(a|\xi|)-\sin(b|\xi|)|}{|\xi|}\leq 2\frac{|\sin\left(\frac{a-b}{2}|\xi|\right)|}{|\xi|}
\leq 2 \left\{2\left[\left(\frac{a-b}{2}\right)^2 \vee 1\right]\frac{1}{1+|\xi|^2}\right\}^{1/2}.
\]

It follows that
\begin{align*}
\bE\Big[\sup_{t \in [0,T]}|\cF \big(u_{N}^{3,1}(t,\cdot)-u_{N,n}^{3,1}(t,\cdot) \big)(\xi)|^2\Big] & \leq C \bE\left[\frac{1}{1+|\xi|^2}\left(\int_0^T \int_{K}|\sigma_{(n),N}(s,y)|dyds\right)^2\right] \\
& \leq C \frac{1}{1+|\xi|^2}\int_0^T \int_{K}\bE|\sigma_{(n),N}(s,y)|^2dyds
\end{align*}
and
\begin{align*}
& \bE\Big[\sup_{t \in [0,T]}\|u_{N}^{3,1}(t,\cdot)- u_{N,n}^{3,1}(t,\cdot)\|_{H^r(\bR)}^2 \Big]   \\
& \quad \quad \quad \leq C
\left(\int_{\bR} \frac{1}{1+|\xi|^2} (1+|\xi|^2)^r d\xi \right)
\left(\int_0^T \int_{K} \bE|\sigma_{(n),N}(s,y)|^2
dyds\right).
\end{align*}
The last integral converges to $0$ as $h \to 0$, by the dominated convergence theorem. Hence, there exists a subsequence $N' \subset \bN$ such that $\sup_{t \in [0,T]}\|u_{N}^{3,1}(t,\cdot)- u_{N,n}^{3,1}(t,\cdot)\|_{H^r(\bR)}^2\to 0$ a.s. when $n\to\infty$, $n \in N'$.

\end{proof}

\appendix

\section{Stochastic integral}

For the reader's convenience, we here include the definition of the stochastic integral with respect to an $L^p$-random measure. We refer the reader to
\cite{chong17-JTP,chong17-SPA,CDH19} for more details.

Let $\widetilde{\Omega}=\Omega \times \bR_{+} \times \bR^d$ and $\cP=\cP_0 \times \cB(\bR^d)$, where $\cP_0$ is the predictable $\sigma$-field on $\Omega \times \bR_+$.
Let $p\in [0,\infty)$ be arbitrary. Given a sequence $(\widetilde{\Omega}_k)_{k\geq 1}\subset \cP$ satisfying
$\widetilde{\Omega}_k \uparrow \widetilde{\Omega}$, a map $M:\cP_M=\bigcup_{k\geq 1} \cP|_{\widetilde{\Omega}_k} \to L^p(\Omega)$ is called an {\em $L^p$-random measure} if for any disjoint sets $(A_i)_{i\geq 1}$ in $\cP_{M}$ such that $\bigcup_{i\geq 1}A_i \in \cP_M$, we have $M(\bigcup_{i\geq 1}A_i)=\sum_{i\geq 1}M(A_i)$ in $L^p(\Omega)$, and some additional adaptedness conditions hold.

If $S$ is a {\em simple integrand} of the form $S=\sum_{i=1}^{k} a_i 1_{A_i}$ for some $a_i \in \bR$ and $A_i \in \cP_M$, then the stochastic integral of $S$ with respect to $M$ is given by:
\[
I^M(S)=\int_{0}^{\infty}\int_{\bR^d}S(t,x)M(dt,dx)=\sum_{i=1}^{k}a_i M(A_i).
\]
Let $\cS_M$ be the set of all simple integrands. The {\em Daniell mean} of a process $H=\{H(t,x);t\geq 0,x \in \bR^d\}$ with respect to $M$ is defined by
\[
\|H\|_{M,p}=\sup_{S \in \cS_{M};|S|\leq |H|} \|I^M(S)\|_p.
\]
A predictable process $H$ is said to be {\em $p$-integrable with respect to $M$} if there exists a sequence $(S_n)_{n\geq 1} \subset \cS_M$ such that $\|S_n-H\|_{M,p} \to 0$ as $n \to \infty$. In this case, the {\em stochastic integral} of $H$ with respect to $M$ is defined by:
\[
I^M(S)=\lim_{n\to \infty}I^M(S_n) \quad \mbox{in} \quad L^p(\Omega).
\]
We denote by $L^{1,p}(M)$ the set of $p$-integrable processes with respect to $M$. Then the map
$I^M:L^{1,p}(M) \to L^p(\Omega)$ is a contraction.
In these definitions, we omit writing $p$, if $p=0$.

\section{Moment Inequalities}

In this section, we include the moment inequalities which were used in the sequel. The first result is a Rosenthal-type maximal inequality (see also Theorem 1 of \cite{marinelli-rockner14}).

\begin{theorem}[Theorem 2.3 of \cite{BN16}]
\label{max-ineq}
For any predictable process $H$ and for any $p\geq 2$,
\begin{align*}
&\bE\left[\sup_{t\leq T} \left|\int_0^t \int_{\bR^d} \int_{\bR}H(s,x,z)\widetilde{J}(ds,dx,dz)\right|^p\right]\leq \\
& \quad C_p\left\{ \bE \left[\left(\int_0^T \int_{\bR^d}\int_{\bR}H^2(t,x,z)\nu(dz)dxdt \right)^{p/2}\right]+\bE \left[\int_0^T \int_{\bR^d}\int_{\bR}|H(t,x,z)|^p \nu(dz)dx dt \right]\right\},
\end{align*}
where $C_p>0$ is a constant depending on $p$.
\end{theorem}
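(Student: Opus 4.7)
Write $M_t:=\int_0^t\int_{\bR^d}\int_{\bR}H(s,x,z)\widetilde J(ds,dx,dz)$, which is a purely discontinuous local martingale with jumps $\Delta M_s=H(s,x,z)$ at the atoms $(s,x,z)$ of $J$. The overall plan is the classical Bichteler--Jacod--Kunita route: combine Doob's $L^p$-maximal inequality with an Itô-type expansion of $|M_t|^p$, handle the compensator term via an elementary pointwise bound on $p$-th powers, and then absorb a $\sup$-type factor back into the left-hand side by H\"older/Young. A preliminary localization by $\tau_n=\inf\{t:|M_t|>n\}$ (together with truncation of $H$ to reduce to the bounded, compactly supported case) allows one to treat $M$ as a genuine square-integrable martingale; at the end one invokes monotone convergence since both sides of the inequality are monotone in $|H|$. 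Throughout, one may assume the right-hand side is finite, otherwise there is nothing to prove.

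The heart of the argument is Itô's formula applied to the $C^2$ function $f(x)=|x|^p$ ($p\ge 2$). Since $M$ has no continuous martingale part, for every $t\le T$,
\[
|M_t|^p=\int_0^t\!\!\int_{\bR^d}\!\!\int_{\bR} p|M_{s-}|^{p-2}M_{s-}H(s,x,z)\,\widetilde J(ds,dx,dz)
+\int_0^t\!\!\int_{\bR^d}\!\!\int_{\bR}\Phi(M_{s-},H(s,x,z))\,J(ds,dx,dz),
\]
with $\Phi(a,b):=|a+b|^p-|a|^p-p|a|^{p-2}ab$. The first term is a (local) martingale that vanishes after taking expectation under the stopping/truncation scheme, and the key elementary estimate $|\Phi(a,b)|\le C_p(|a|^{p-2}b^2+|b|^p)$ (valid for $p\ge 2$, proved via a one-variable Taylor expansion plus the trivial bound $|a+b|^p\le 2^{p-1}(|a|^p+|b|^p)$) lets one compensate the second term: replacing $J$ by $\widetilde J + dt\,dx\,\nu(dz)$ and dropping the resulting martingale gives, after taking expectation,
\[
\bE|M_T|^p\le C_p\,\bE\!\int_0^T\!\!\int_{\bR^d}\!\!\int_{\bR}\!\Big(|M_{s-}|^{p-2}H^2(s,x,z)+|H(s,x,z)|^p\Big)\nu(dz)\,dx\,ds.
\]

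Denoting $Q_T:=\int_0^T\!\!\int\!\!\int H^2\,\nu(dz)dx\,ds$ and $R_T:=\int_0^T\!\!\int\!\!\int |H|^p\nu(dz)dx\,ds$, the first term on the right is bounded by $\bE[(\sup_{s\le T}|M_s|)^{p-2}Q_T]$. H\"older with conjugate exponents $p/(p-2)$ and $p/2$ followed by Young's inequality yields, for any $\varepsilon>0$,
\[
\bE\big[(\textstyle\sup_{s\le T}|M_s|)^{p-2}Q_T\big]\le \varepsilon\,\bE(\textstyle\sup_{s\le T}|M_s|)^p+C(\varepsilon)\,\bE\,Q_T^{p/2}.
\]
Plugging this back and invoking Doob's $L^p$-maximal inequality $\bE(\sup_{s\le T}|M_s|)^p\le (p/(p-1))^p\bE|M_T|^p$ converts the estimate on $\bE|M_T|^p$ into one on $\bE(\sup_{s\le T}|M_s|)^p$ with an extra $\varepsilon$-multiple of the same quantity on the right; choosing $\varepsilon$ small enough to absorb this term on the left gives the desired bound $\bE(\sup_{t\le T}|M_t|)^p\le C_p(\bE\,Q_T^{p/2}+\bE\,R_T)$. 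Letting $n\to\infty$ in the localization removes the stopping time.

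The main obstacle is the elementary inequality $|\Phi(a,b)|\le C_p(|a|^{p-2}b^2+|b|^p)$: although standard, its constant $C_p$ is the only place where $p$ enters, and for $p\in[2,3)$ one must be careful because $f(x)=|x|^p$ is not $C^3$, so one cannot use a third-order Taylor expansion and instead splits into the regimes $|b|\le |a|/2$ (Taylor with remainder on $[a,a+b]$) and $|b|>|a|/2$ (direct bound by $|b|^p$). The second genuine technical point is the localization/truncation argument needed to justify that the martingale terms indeed vanish in expectation and that the two sides pass to the limit monotonically; this is a routine but careful application of Fatou's lemma together with the fact that, on the stopped/truncated level, $M$ has moments of all orders.
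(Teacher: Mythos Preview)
The paper does not give its own proof of this theorem: it is quoted verbatim from \cite{BN16} (with a parallel pointer to Theorem~1 of \cite{marinelli-rockner14}) and used as a black box in the appendix. So there is no ``paper's proof'' to compare against.

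Your argument is correct and is precisely the classical Bichteler--Jacod/Kunita route that those references follow: It\^o's formula for $|M_t|^p$ with $M$ purely discontinuous, the pointwise bound $|\Phi(a,b)|\le C_p(|a|^{p-2}b^2+|b|^p)$ on the jump remainder, compensation, then H\"older with exponents $p/(p-2)$ and $p/2$ followed by Young to split off an $\varepsilon\,\bE[\sup_s|M_s|^p]$ which Doob's inequality lets you absorb on the left. The localization via $\tau_n$ is exactly what is needed to make the absorption step legitimate (i.e., to ensure the quantity being subtracted is finite), and your case split $|b|\le|a|/2$ versus $|b|>|a|/2$ for the $\Phi$-estimate is the standard way to cover all $p\ge 2$. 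Nothing is missing.
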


To control the $p$-th moments of integrals with respect to $J$, we use the following result.

\begin{lemma}
\label{mom-N}
For any predictable process $H$ such that
$\int_0^T \int_{\bR^d}\int_{\bR}|H(t,x,z)|\nu(dz)dxdt<\infty$ a.s.
and for any $p\geq 2$,
\begin{align*}
& \bE\left|\int_0^T \int_{\bR^d}\int_{\bR}H(t,x,z)J(dt,dx,dz)\right|^p  \leq \\
& \quad C_p\left\{ \bE \left[\left(\int_0^T \int_{\bR^d}\int_{\bR}H^2(t,x,z)\nu(dz)dxdt \right)^{p/2}\right]+\bE \left[\int_0^T \int_{\bR^d}\int_{\bR}|H(t,x,z)|^p \nu(dz)dx dt \right] \right. \\
& \quad \quad + \left.\bE \left[\left(\int_0^T \int_{\bR^d}\int_{\bR}|H(t,x,z)|\nu(dz)dxdt \right)^{p}\right]\right\},
\end{align*}
where $C_p>0$ is a constant depending on $p$.
\end{lemma}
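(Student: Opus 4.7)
The plan is to decompose the integral against the uncompensated Poisson random measure $J$ into a compensated part and a deterministic (pathwise) compensator part. Since by hypothesis $\int_0^T \int_{\bR^d} \int_{\bR} |H(t,x,z)| \nu(dz)\,dx\,dt < \infty$ almost surely, the pathwise integral against the compensator is absolutely convergent, and we may write
\[
\int_0^T \int_{\bR^d}\int_{\bR} H(t,x,z) J(dt,dx,dz) = \int_0^T \int_{\bR^d}\int_{\bR} H(t,x,z) \widetilde{J}(dt,dx,dz) + \int_0^T \int_{\bR^d}\int_{\bR} H(t,x,z) \nu(dz)\,dx\,dt.
\]

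Next, I would apply the elementary convexity inequality $|a+b|^p \leq 2^{p-1}(|a|^p + |b|^p)$ and estimate the two resulting $L^p$-moments separately. The deterministic piece is dominated in absolute value by $\int_0^T \int_{\bR^d}\int_{\bR} |H(t,x,z)| \nu(dz)\,dx\,dt$, whose $p$-th moment is precisely the third term on the right-hand side of the asserted inequality. The compensated piece is handled by invoking Theorem \ref{max-ineq} (simply dropping the supremum over $t\leq T$ to retain the time-$T$ bound), which yields exactly the first two terms on the right-hand side. Summing the bounds produces the claim with a constant $C_p$ depending only on $p$.

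I do not anticipate any serious obstacle; this is essentially bookkeeping once one notices the splitting $J=\widetilde{J} + \nu(dz)\,dx\,dt$. The one subtlety worth highlighting is well-posedness of the decomposition and of the compensated stochastic integral: we may harmlessly assume the right-hand side of the claim is finite (otherwise the inequality is vacuous), and under that assumption the finiteness of the first two terms guarantees that $H$ lies in the domain of the $L^p$-stochastic integral with respect to $\widetilde{J}$ required for Theorem \ref{max-ineq} to apply, while the finiteness of the third guarantees that the pathwise compensator contribution is in $L^p(\Omega)$.
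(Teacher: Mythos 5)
Your proposal is correct and follows exactly the paper's argument: decompose $\int H\,dJ=\int H\,d\widetilde{J}+\int H\,\nu(dz)\,dx\,dt$, bound the compensator term trivially, and apply Theorem \ref{max-ineq} to the compensated part. The extra remarks on well-posedness are fine but not needed beyond what the paper records.
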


\begin{proof}
This follows from Theorem \ref{max-ineq}, writing
$\int H dJ=\int H d \widetilde{J} +\int H dt dx \nu(dz)$.
\end{proof}

The next result considers the case $p\leq 2$.

\begin{theorem}
\label{max-ineq-th}
a) For any predictable process $H$ and for any $p\in [1,2]$,
\begin{align*}
\bE\left[\sup_{t\leq T} \left|\int_0^t \int_{\bR^d} \int_{\bR}H(s,x,z)\widetilde{J}(ds,dx,dz)\right|^p\right]\leq C_p\bE \left[\int_0^T \int_{\bR^d}\int_{\bR}|H(t,x,z)|^p \nu(dz)dx dt \right],
\end{align*}
where $C_p>0$ is a constant depending on $p$.

b) For any predictable process $H$ and for any $p\in (0,1)$,
\begin{align*}
\bE\left|\int_0^t \int_{\bR^d} \int_{\bR}H(s,x,z)J(ds,dx,dz)\right|^p \leq \bE \left[\int_0^T \int_{\bR^d}\int_{\bR}|H(t,x,z)|^p \nu(dz)dx dt \right].
\end{align*}
\end{theorem}

\begin{proof} a) The process $M_t=\int_0^t \int_{\bR^d} \int_{\bR}H(s,x,z)\widetilde{J}(ds,dx,dz)$ is a martingale with quadratic variation:
\[
[M]_t=\int_0^t \int_{\bR^d} \int_{\bR}H^2(s,x,z)J(ds,dx,dz).
\]
By the Burkholder-Davis-Gundy inequality, since $p\geq 1$, $\bE(\sup_{t\leq T}|M_t|^p)\leq C_p \bE([M]_t^{p/2})$. Since $p/2 \leq 1$, it can be proved that:
$$
[M]_t^{p/2} \leq \int_0^t \int_{\bR^d} \int_{\bR}|H(s,x,z)|^pJ(ds,dx,dz)
$$
(see e.g. the proof of Lemma 8.22 of \cite{PZ07}). The conclusion follows taking expectation.

b) This follows using the inequality $|x+y|^p \leq |x|^p+|y|^p$. We refer to the proof of Lemma A.2.(3) of \cite{CDH19}.
\end{proof}

\vspace{3mm}

\noindent \footnotesize{{\em Acknowledgement.} The author is grateful to Carsten Chong and Gennady Samorodnitsky for useful discussions.

\normalsize{

\end{document}